\documentclass[reqno,twoside,12pt]{amsart}
\usepackage{amssymb,amsfonts,amsthm,amsmath,mathrsfs}
\usepackage{enumitem}


\usepackage[pagebackref=false]{hyperref}

\usepackage[hmargin=1in,vmargin=1in]{geometry}

\usepackage{cite}
\usepackage{color}

\def\eqdef{\stackrel{\rm def}{=}}

\def\beq{\begin{equation}}
\def\eeq{\end{equation}}
\def\beqs{\begin{equation*}}
\def\eeqs{\end{equation*}}

\newcommand{\tnum}{\rm(\roman*)}
\newcommand{\rnum}{\rm(\alph*)}

\newtheorem{theorem}{Theorem}[section]
\newtheorem{lemma}[theorem]{Lemma}
\newtheorem{proposition}[theorem]{Proposition}
\newtheorem{corollary}[theorem]{Corollary}
\newtheorem{definition}[theorem]{Definition}
\newtheorem{assumption}[theorem]{Assumption}

\theoremstyle{definition}
\newtheorem{remark}[theorem]{Remark}
\newtheorem{example}[theorem]{Example}

\newcommand{\ddt}{\frac{\d}{\d t}}
\def\d{{\rm d}}

\def\varep{\varepsilon}

\renewcommand{\Re}{\operatorname{Re}}
\renewcommand{\Im}{\operatorname{Im}}

\renewcommand{\geq}{\ge}


\newcommand{\R}{\ensuremath{\mathbb R}}
\newcommand{\C}{\ensuremath{\mathbb C}}
\newcommand{\N}{\ensuremath{\mathbb N}}
\newcommand{\Z}{\ensuremath{\mathbb Z}}

\newcommand{\K}{\mathbb{K}}

\newcommand{\LL}{\mathcal L}
\newcommand{\iln}{L}

\newcommand{\bigo}{\mathcal O}

\numberwithin{equation}{section}

\title{A new form of asymptotic expansion for non-smooth differential equations with time-decaying forcing functions}
\author{Luan Hoang}

\address{Department of Mathematics and Statistics,
Texas Tech University\\
1108 Memorial Circle, Lubbock, TX 79409--1042, U. S. A.}
\email{luan.hoang@ttu.edu}

\makeatletter
\@namedef{subjclassname@2020}{
  \textup{2020} Mathematics Subject Classification}
\makeatother

\keywords{non-smooth ODE, nonlinear dynamics, long-time behavior, asymptotic expansion, subordinate variable}
\subjclass[2020]{34D05, 34E05, 41A60}

\date{\today}

\begin{document}

\begin{abstract}
This article is focused on the asymptotic expansions, as time tends to infinity, of solutions of a  system of ordinary differential equations with non-smooth nonlinear terms. The forcing function decays to zero in a very complicated but coherent way. We prove that every decaying solution admits an asymptotic expansion of a new type. This expansion contains a new variable that allows it to be established in a closed-form, but does not affect the meaning and precision of the expansion. Moreover, the expansion is  constructed explicitly with the use of the complexification method.\end{abstract}

\maketitle


\pagestyle{myheadings}\markboth{L. Hoang}
{A New Form of Asymptotic Expansion for Non-smooth Differential Equations}

\tableofcontents

\section{Introduction}\label{intro}

Describing the long-time dynamic of solutions of nonlinear ordinary differential equations (ODE) and partial differential equations (PDE) is always difficult. Even when its general knowledge is available such as the existence of the global attractor or stability of an equilibrium, understanding it in finer details is challenging. However, there have been progresses over the years for the latter case. Below, we review some of them as the motivation for our current research.
The papers  \cite{FS84a,FS87} study the Navier--Stokes equations (NSE) for a viscous, incompressible fluid in a bounded or periodic domain $\Omega$ in $\R^3$  which can be written in the functional form as
\beq\label{NSE}
\frac{\d y}{\d t}+Ay+B(y,y)=f(t),
\eeq
where $A$ is the (linear) Stokes operator, $B$ is a bilinear form in appropriate functional spaces, and $f(t)$ is the Leray projection of the outer body force.

When the body force is potential, i.e., $f(t)\equiv 0$,
Foias and Saut prove in \cite{FS84a} that any non-trivial regular solution $y(t)$ of \eqref{NSE} satisfies, as $t\to\infty$,  
\beq\label{rawxi}
e^{\lambda t} y(t)\to \xi\text{ for some $\lambda>0$ and $\xi\ne 0$ with } A\xi=\lambda\xi.
\eeq
Furthermore, they obtain in \cite{FS87} the following  asymptotic expansion, as $t\to\infty$,
\beq\label{FSx}
y(t)\sim \sum_{n=1}^\infty q_n(t) e^{-\mu_n t},
\eeq
in $C^m(\bar \Omega)^3$ for any integer $m\ge 0$, where $q_n(t)$ are polynomials in $t$, valued in the space of smooth functions, the sequence $(\mu_n)_{n=1}^\infty$ is positive, strictly increasing to infinity.
More precisely, for any integers $N\ge 1$ and  $m\ge 0$, there is a number $\mu>\mu_N$ such that 
$$ \left\| y(t)- \sum_{n=1}^N q_n(t) e^{-\mu_n t}\right\|_{C^m(\bar \Omega)^3}=\bigo(e^{-\mu t})\text{ as }t\to\infty.$$
The asymptotic expansion \eqref{FSx} leads to further developments such as the associated nonlinear spectral manifold and Poincar\'e--Dulac normal form among  other things, see \cite{FHS2} and references therein.
The asymptotic expansion of the type \eqref{FSx} is also established for the NSE of rotating fluids  \cite{HTi1},  dissipative wave equations \cite{Shi2000}, and the Lagrangian trajectories  \cite{H4}.

When the body force is non-potential, under the assumption that $f(t)\to 0$ as $t\to\infty$ in a coherent manner
\beq \label{fxp0}
    f(t)\sim\sum_{k=1}^\infty f_k(t),
\eeq
the solution $y(t)$ of \eqref{NSE} is proved to admit a similar asymptotic expansion
\beq \label{yxp0}
    y(t)\sim \sum_{k=0}^\infty y_k(t),
\eeq
see \cite{HM2,CaH1,CaH2,H6}. The meaning of the expansions \eqref{fxp0} and \eqref{yxp0} above is similar to that of \eqref{FSx}, but their forms can be much more general. The reader is referred to the cited papers for details.
The results \eqref{rawxi}, \eqref{FSx} and \eqref{yxp0} are extended 
to general systems of ODE without forcing functions in \cite{Minea,CaHK1,H7,H8,H9}, and with forcing functions in \cite{CaH3,H5}. 
(For classical theory of and other approaches to the asymptotic analysis of ODE, see \cite{ArnoldODEGeo,BrunoBook1989,BrunoBook2000,LefschetzBook,KFbook2013,WasowBook}.)
We recall here some  results and ideas from \cite{H5,CaHK1} that are relevant to the current work.

The paper \cite{H5} studies the following ODE system in $\R^n$ or $\C^n$, which is in a more general form than \eqref{NSE},
\beq\label{mainode}
\frac{\d y}{\d t} +Ay =F(y)+f(t),
\eeq
where $A$ is an $n\times n$ constant matrix, and $F$ is a vector field on $\R^n$ satisfying $F(0)=0$, $F'(0)=0$, and having a the Taylor expansion about the origin.
The function $f(t)$ is assumed to have  a very general expansion \eqref{fxp0}, where 
\beq \label{fkreview}
\text{    $f_k$ may contain complex number powers of $e^t$, $t$, $\ln t$, $\ln\ln t$, etc.}
\eeq
Then any decaying solution $y(t)$ admits the expansion \eqref{yxp0}.

The paper \cite{CaHK1} studies \eqref{mainode} in $\R^n$ under the considerations that $f(t)\equiv 0$
and the function $F(y)$ may not be smooth in any neighborhood of the origin. However, $F$ is assumed to have the following asymptotic expansion
\beq \label{Fsum}
F(y)\sim \sum_{k=1}^\infty F_k(y)\text{ as }y\to 0,
\eeq 
where 
\beq \label{NSH}
    F_k\in C^\infty(\R^n\setminus\{0\})\text{   is positively homogeneous of a degree  $\beta_k>1$ with $\beta_k\nearrow \infty$.}
\eeq 
After establishing the first asymptotic approximation \eqref{rawxi}, one can scale the solution and shift the center of expansion of the Taylor series of $F_k(y)$ from the origin to $\xi\ne 0$. 
Then  the techniques in \cite{FS87,CaH3} are utilized to obtain the asymptotic expansion \eqref{FSx}.

\medskip
The current paper aims to study equation \eqref{mainode} but with the hypotheses \eqref{fxp0}, \eqref{fkreview}, \eqref{Fsum} and  \eqref{NSH} which are more general than all the cited papers. Our goal is to obtain the  asymptotic expansion \eqref{yxp0}, as $t\to\infty$,  for any decaying solution $y(t)$. As pointed out in section \ref{newex} below, in some circumstances, a direct application of the above reviewed methods cannot achieve this goal. To fix it, we have to introduce a new variable $\zeta=\zeta(t)$, which will be called subordinate variable, and prove that any decaying solution $y(t)$ has an asymptotic expansion
\beq\label{newye}
y(t)\sim \sum_{k=1}^\infty Y_k(t,\zeta(t)), \text { as } t\to\infty,
\eeq 
where each function $Y_k(t,\zeta)$ can be constructed recursively.
Of course, the meaning of the new asymptotic expansion \eqref{newye} needs to be justified.
With the new variable $\zeta$, we have to resolve many technical issues in order to achieve \eqref{newye}.

\medskip
The paper is organized as follows.
In section \ref{bkgsec}, we recall previously studied classes of functions and asymptotic expansions. A new form of asymptotic expansion is defined in Definition \ref{zetxp} using a new variable, called the subordinate variable, $\zeta$.
We state the main result -- Theorem \ref{mainthm} -- in section \ref{mainresult}.
Section \ref{tools} establishes fundamental estimates that are needed in later proofs.
Particularly, we have  the fundamental asymptotic approximation result for linear equations in Theorem \ref{newlin}. 
The asymptotic behavior of the solutions of equation \eqref{mainode} are obtained in section \ref{asympsec}.
More specifically, the first asymptotic approximation, even for general equations in $\C^n$, is established in Theorem \ref{thmap1}.
The asymptotic expansion is proved in Theorem \ref{mainthm2}  which is a more detailed version of the Main 
Theorem \ref{mainthm}. 
In dealing with systems of differential equations with variables and solutions being real numbers, it is intuitive to have the results stated with only real number variables and $\R$-valued functions. Therefore, Theorem \ref{thmreal} in section \ref{sinsec} is obtained purely using just those, particularly, the real sinusoidal functions. 
Conclusions and remarks on future developments are made in section \ref{conclude}.
\section{Preliminaries}\label{bkgsec}

\subsection{Notation}
We  use the following notation throughout the paper. 
\begin{itemize}
 \item $\N=\{1,2,3,\ldots\}$, $\Z_+=\N\cup\{0\}$ and $i=\sqrt{-1}$.

 \item For a vector $x\in\C^n$, we denote by $|x|$, $\Re x$ and $\Im x$  its Euclidean norm, real and imaginary parts, and by $x^{(k)}$ the $k$-tuple $(x,\ldots,x)$ for $k\in\N$, and  $x^{(0)}=1$. 
 For an $m\times n$ matrix $M$ of complex numbers, its Euclidean norm in $\C^{mn}$ is denoted by $|M|$.
\item We will use the convention 
$\sum_{k=1}^0 a_k=0$ and $\prod_{k=1}^0 a_k=1$.
 
 \item Let $f$ be a $\C^m$-valued function and $h$ be a non-negative function, both  are defined in a neighborhood of the origin in $\C^n$. 
 We write 
 $f(x)=\bigo(h(x))$ as $x\to 0$
to indicate $|f(x)|=\bigo(h(x))\text{  as }x\to 0.$
 
 \item Let $f,g:[T_0,\infty)\to \C^n$ and $h:[T_0,\infty)\to[0,\infty)$ for some $T_0\in \R$. We write 
 $$f(t)=\bigo(h(t)), \text{ implicitly meaning as $t\to\infty$,} $$
if $|f(t)|=\bigo(h(t))$ as $t\to\infty$.
Also,
$$f(t)=g(t)+\bigo(h(t))\text{ means } f(t)-g(t)=\bigo(h(t)).$$  

\item If $n,m,k\in \N$ and $\mathcal L$ is an $m$-linear mapping from $(\R^n)^m$ to $\R^k$, the norm of $\mathcal L$ is defined by
\beqs
\|\mathcal L\|=\max\{ |\mathcal L(x_1,x_2,\ldots,x_m)|:x_j\in\R^n,|x_j|=1,\text{ for } 1\le j\le m\}.
\eeqs 
It is known that the norm $\|\mathcal L\|$ belongs to  $[0,\infty)$, and one has 
\beq\label{multiL}
|\mathcal L(x_1,x_2,\ldots,x_m)|\le \|\mathcal L\|\cdot |x_1|\cdot |x_2|\ldots |x_m|
\quad \forall x_1,x_2,\ldots,x_m\in\R^n.
\eeq

\item
Let $S$ be a subset of $\C^k$ with $k\in\N$. We say $S$ preserves the conjugation, respectively, the addition, if the conjugate $\bar x$ of any $x\in S$ also belongs to $S$,  respectively, if $x+y\in S$ for all $x,y\in S$.
 When $k=1$, we say $S$ preserves the unit increment if $x+1\in S$ for all $x\in S$.

\end{itemize}

\subsection{Elementary  functions}
In this paper, we only deal with single-valued complex functions. Particularly, for $z\in\C$ and $t>0$, we have the  power function
$t^z=\exp(z\ln t)=e^{z\ln t}$.
We will often use the fact $|t^z|=t^{\Re z}$.

\begin{definition}\label{ELdef} Define the iterated exponential and logarithmic functions as follows:
\begin{align*} 
&E_0(t)=t \text{ for } t\in\R,\text{ and } E_{m+1}(t)=e^{E_m(t)}  \text{ for } m\in \Z_+, \ t\in \R,\\
&\iln_{-1}(t)=e^t,\quad \iln_0(t)=t\text{ for } t\in\R,\text{ and }\\
& \iln_{m+1}(t)= \ln(\iln_m(t)) \text{ for } m\in \Z_+,\ t>E_m(0).
\end{align*}

For $k\in \Z_+$, define $\widehat \LL_k=(\iln_{-1},\iln_{0},\iln_{1},\ldots,\iln_{k})$, that is,
\beqs  
\widehat \LL_k(t)=(e^t,t,\ln t,\ln\ln t,\ldots,\iln_{k}(t)).
\eeqs 
\end{definition}

For $m\in\Z_+$, note that $\iln_m(t)$ is positive and increasing  for $t>E_m(0)$ and  
$\iln_m(E_{m+1}(0))=1$.
Clearly,\beq  \label{LLk}
\lim_{t\to\infty} \frac{\iln_k(t)^\lambda}{\iln_m(t)}=0\text{ for all }k>m\ge -1 \text{ and } \lambda\in\R.
\eeq 

We recall a useful inequality from \cite{CaH3}.

\begin{lemma}[{\cite[Lemma 2.5]{CaH3}}]\label{plnlem}
Let $m\in\Z_+$ and $\lambda>0$, $\gamma>0$  be given. For any number $T_*>E_m(0)$, there exists a number $C>0$ such that
\beq\label{iine2}
 \int_0^t e^{-\gamma (t-\tau)}\iln_m(T_*+\tau)^{-\lambda}\d\tau
 \le C \iln_m(T_*+t)^{-\lambda} \quad\text{for all }t\ge 0.
\eeq
\end{lemma}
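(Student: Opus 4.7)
Let $g(\tau)=\iln_m(T_*+\tau)^{-\lambda}$ for $\tau\ge 0$, and denote by $I(t)$ the left-hand side of \eqref{iine2}. Since $T_*>E_m(0)$, the function $g$ is positive, continuous, and strictly decreasing on $[0,\infty)$. The plan is to split the domain $[0,t]$ at $t/2$ for large $t$, estimate each piece separately, and handle small $t$ by direct bounds.

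On $[0,t/2]$, using $e^{-\gamma(t-\tau)}\le e^{-\gamma t/2}$ together with $g(\tau)\le g(0)$ produces an upper bound of $(t/2)g(0)e^{-\gamma t/2}$. This decays exponentially in $t$, whereas by \eqref{LLk} $g(t)$ decays only as a negative power of an iterated logarithm, so this contribution is $o(g(t))$ as $t\to\infty$ and in particular is dominated by any constant multiple of $g(t)$ once $t$ is large.

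On $[t/2,t]$, the key estimate is a comparison $g(\tau)\le M\,g(t)$ for all $\tau\in[t/2,t]$ with $M$ independent of $t$. Since $\iln_m$ is increasing, this reduces to bounding
\[
\rho_m(t):=\iln_m(T_*+t)\big/\iln_m(T_*+t/2)
\]
uniformly for $t\ge 0$. I would do this by induction on $m$: for $m=0$, $\rho_0(t)=(T_*+t)/(T_*+t/2)\le 2$; for $m\ge 1$, the identity
\[
\iln_m(T_*+t)=\iln_m(T_*+t/2)+\ln\rho_{m-1}(t)
\]
combined with the inductive bound on $\rho_{m-1}$ and the divergence $\iln_m(T_*+t/2)\to\infty$ forces $\rho_m(t)\to 1$, and continuity on $[0,\infty)$ then gives uniform boundedness. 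Granting this,
\[
\int_{t/2}^t e^{-\gamma(t-\tau)}g(\tau)\,\d\tau\le M g(t)\int_{t/2}^t e^{-\gamma(t-\tau)}\,\d\tau\le \frac{M}{\gamma}g(t).
\]

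Combining the two pieces yields $I(t)\le C' g(t)$ for all $t$ beyond some threshold $T_2$; for $t\in[0,T_2]$, $I(t)\le (t/\gamma)g(0)$ is bounded while $g(t)\ge g(T_2)>0$, so the desired constant $C$ in \eqref{iine2} is obtained by enlarging $C'$. The only step demanding a little care is the uniform boundedness of $\rho_m$, which is the mild technical obstacle; it is settled by the short induction sketched above, and everything else is immediate.
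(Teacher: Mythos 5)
Your argument is correct and self-contained. Note that the paper itself does not reproduce a proof here; Lemma \ref{plnlem} is quoted verbatim from \cite[Lemma 2.5]{CaH3}, so there is no in-text proof to compare against. Your split at $t/2$ with the exponential factor killing the near piece, together with the uniform comparison $\iln_m(T_*+t)/\iln_m(T_*+t/2)\le M$ established by induction on $m$ via $\iln_m(T_*+t)-\iln_m(T_*+t/2)=\ln\rho_{m-1}(t)$, is sound; the small technical point that $T_*>E_m(0)$ keeps $\iln_m(T_*+t/2)$ bounded away from zero for all $t\ge 0$, ensuring continuity of $\rho_m$ on $[0,\infty)$, is correctly implicit in your reliance on the hypothesis. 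An alternative route (likely closer to what one finds in the cited source and similar lemmas) is to observe that $\ddt\bigl[e^{\gamma t}\iln_m(T_*+t)^{-\lambda}\bigr]\ge \tfrac{\gamma}{2}e^{\gamma t}\iln_m(T_*+t)^{-\lambda}$ for $t$ large, because $\ddt \iln_m(T_*+t)=\prod_{j=0}^{m-1}\iln_j(T_*+t)^{-1}\to 0$, and then integrate this differential inequality; that bypasses the ratio lemma but requires the same kind of handling of the initial segment, so the effort is comparable.
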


\subsection{Previous asymptotic expansions}\label{oldex}
We recall the definitions in \cite[Subsection 3.2]{H6}.
Let $\K=\R$ or $\C$. For 
\beq
\label{azvec} 
z=(z_{-1},z_0,z_1,\ldots,z_k)\in (0,\infty)^{k+2}
\text{ and } 
\alpha=(\alpha_{-1},\alpha_0,\alpha_1,\ldots,\alpha_k)\in \K^{k+2},
\eeq
 define 
$\begin{displaystyle} z^{\alpha}=\prod_{j=-1}^k z_j^{\alpha_j}.
 \end{displaystyle}$
The vector $\alpha$ is called the power vector. We say $\alpha$ is real if $\alpha=\Re(\alpha)$.

For $\mu\in\R$, $m,k\in\Z$ with  $k\ge m\ge -1$, denote by $\mathcal E_\K(m,k,\mu)$ the set of vectors $\alpha$ in \eqref{azvec} 
 such that
 \beqs
\Re(\alpha_j)=0 \text{ for $-1\le j<m$   and  } \Re(\alpha_m)=\mu.
\eeqs 
Particularly, $\mathcal E_\R(m,k,\mu)$ is the set of vectors $\alpha=(\alpha_{-1},\alpha_0,\ldots,\alpha_k)\in \R^{k+2}$   such that
 $$\alpha_{-1}=\ldots=\alpha_{m-1}=0 \text{ and }\alpha_m=\mu.$$
For example, when $m=-1$, $k\ge -1$, $\mu=0$, the set 
\beq \label{Eminus}
\mathcal E_\K(-1,k,0)\text{  is  the collection of vectors $\alpha$ in \eqref{azvec} with $\Re (\alpha_{-1})=0$. }
\eeq 

Let $k\ge m\ge -1$, $\mu\in\R$, and $\alpha\in\mathcal E_\K(m,k,\mu)$.
Using \eqref{LLk}, one can verify that, see, e.g., equation (3.14) in \cite{H5}, 
\beq\label{LLo}
\lim_{t\to\infty} \frac{\widehat\LL_{k}(t)^\alpha}{\iln_{m}(t)^{\mu+\delta}}=0 \quad\text{ for any }\delta>0.
\eeq

 \begin{definition}\label{Fclass}
 Let $\K$ be $\R$ or $\C$,  and  $X$ be a linear space over $\K$.

\begin{enumerate}[label=\tnum]
\item For $k\ge -1$, define $\mathscr P(k,X)$ to be the set of functions of the form 
\beq\label{pzdef} 
p(z)=\sum_{\alpha\in S}  z^{\alpha}\xi_{\alpha}\text{ for }z\in (0,\infty)^{k+2},
\eeq 
where $S$ is some finite subset of $\K^{k+2}$, and each $\xi_{\alpha}$ belongs to $X$.

\item Let $k\ge m\ge -1$ and $\mu\in\R$. 
Define $\mathscr P_{m}(k,\mu,X)$ to be the set of functions of the form \eqref{pzdef},
where $S$ is a finite subset of $\mathcal E_\K(m,k,\mu)$ and each $\xi_{\alpha}$ belongs to $X$.
\end{enumerate}
 \end{definition}

If $(X,\|\cdot\|_X)$ is a normed space over $\K=\R$ or $\C$ and  $p\in \mathscr P_{m}(k,\mu,X)$, then, as a consequence of \eqref{LLo}, one has
\beq\label{LLp}
\lim_{t\to\infty} \frac{\|p(\widehat\LL_{k}(t))\|_X}{\iln_{m}(t)^{\mu+\delta}}=0 \quad\text{ for any }\delta>0.
\eeq

\begin{definition}\label{Lexpand}
Let $\K$ be $\R$ or $\C$, and $(X,\|\cdot\|_X)$ be a normed space over $\K$.
Suppose $g(t)$ is a function from $(T,\infty)$ to $X$ for some $T\in\R$, and $m_*\in \Z_+$. 

\begin{enumerate}[label={\tnum}]
 \item\label{LE1} Let $(\gamma_k)_{k=1}^\infty$ be a divergent, strictly increasing sequence of positive numbers, and $(n_k)_{k=1}^\infty$ be a sequence in $\N\cap[m_*,\infty)$. 
We say
\beq\label{expan1}
g(t)\sim \sum_{k=1}^\infty p_k(\widehat{\LL}_{n_k}(t)), \text{ where $p_k\in \mathscr P_{m_*}(n_k,-\gamma_k,X)$ for $k\in\N$, }
\eeq
if, for each $N\in\N$, there is some number $\mu>\gamma_N$ such that
\beqs
\left\|g(t) - \sum_{k=1}^N p_k(\widehat{\LL}_{n_k}(t))\right\|_X=\bigo(\iln_{m_*}(t)^{-\mu}).
\eeqs

\item\label{LE2} Let $N\in\N$, $(\gamma_k)_{k=1}^N$ be positive and strictly increasing, and  $n_*\in\N\cap[m_*,\infty)$.
We say
\beqs
g(t)\sim \sum_{k=1}^N p_k(\widehat{\LL}_{n_*}(t)), \text{ where $p_k\in \mathscr P_{m_*}(n_*,-\gamma_k,X)$ for $1\le k\le N$, }
\eeqs
if it holds for all $\mu>\gamma_N$ that
\beq\label{gremain2}
\left\|g(t) - \sum_{k=1}^N p_k(\widehat{\LL}_{n_*}(t))\right\|_X=\bigo(\iln_{m_*}(t)^{-\mu}).
\eeq
\end{enumerate}
\end{definition}
 
\subsection{A new type of asymptotic expansions} \label{newex}
To motivate the new concept, we consider a simple example for equation \eqref{mainode} when $A$ is a real diagonal matrix, 
$$F(y)=|y|^{3/2}Z_0\text{ and }
f(t)=( (\ln t)^{1/2} - (\ln\ln t)^{-3} +1)t^{-1} \xi_0,\quad 
Z_0,\xi_0\in \R^n\setminus\{0\}.$$
Denote $\zeta(t)=(\ln t)^{1/2} - (\ln\ln t)^{-3} +1$. On the one hand, by using the techniques in \cite{CaH3,H5}, we can obtain the first asymptotic approximation, as $t\to\infty$,
\beq \label{guess1} 
y(t)= \zeta(t)t^{-1}\xi_*+R(t),\text{ where $\xi_*\ne 0$, $R(t)=\bigo(t^{-1-\delta})$ for some $\delta>0$.}
\eeq
On the other hand, following \cite{CaHK1,H5}, we may expect to have the asymptotic expansion \eqref{yxp0} where each $y_k(t)$ is a \textit{finite} sum of the functions
\beq \label{guess2}
    t\mapsto (\ln t)^\alpha (\ln\ln t)^\beta t^{-\mu_k}\xi,\text{ where }
\alpha,\beta\in \R,\mu_k>0,\xi\in\R^n.
\eeq 
Combining \eqref{guess1} and \eqref{guess2}, we attempt
\beq \label{guess3}
    y(t)=\zeta(t)t^{-1}\xi_*+y_2(t)+\ldots
\eeq
Same as in \cite{CaHK1}, we do the scaling and shifting next.
Letting $\theta(t)=\zeta(t)t^{-1}$, we rewrite $y(t)=\theta(t)\left ( \xi_*+\theta(t)^{-1}y_2(t)+\ldots\right)$ and 
\begin{align*}
    F(y(t))&=\theta(t)^{3/2}|\xi_*+\theta(t)^{-1} y_2(t)+\ldots|^{3/2}Z_0\\
&=\theta(t)^{3/2}\left\{ |\xi_*|^{3/2}+ \theta(t)^{-1} y_2(t)\cdot\frac{3\xi_*}{2|\xi_*|^{1/2}}  +\ldots\right\}Z_0 \\
&=\left\{\theta(t)^{3/2} |\xi_*|^{3/2}+ \theta(t)^{1/2} y_2(t)\cdot\frac{3\xi_*}{2|\xi_*|^{1/2}}  +\ldots\right\}Z_0.
\end{align*} 
Observe that  the terms $\theta(t)^{3/2}$ and $\theta(t)^{1/2}$ cannot be expressed as  finite sums of the functions in \eqref{guess2}. Therefore, using the above form \eqref{guess3} with $y_2(t)$ being a finite sum of the functions in \eqref{guess2}  to match both sides of \eqref{mainode} fails.

The new idea to fix this issue is to replace the function in \eqref{guess2} with 
\beq \label{guess4}
    t\mapsto \zeta(t)^{\gamma}(\ln t)^\alpha (\ln\ln t)^\beta t^{-\mu_k}\xi\text{ for }
\alpha,\beta,\gamma\in \R.
\eeq 
We emphasize the presence of the new term $\zeta(t)$ in \eqref{guess4}.
We need to verify that 
\begin{enumerate}[label=\tnum]
    \item all $y_k$ in \eqref{yxp0} can actually be obtained as a finite sum of the functions in \eqref{guess4}, and
    \item the asymptotic expansion \eqref{yxp0} is still meaningful.
\end{enumerate}
In general cases, these two goals will, indeed, be achieved under suitable conditions on the asymptotic expansion \eqref{fxp0} of $f(t)$. To specify these conditions, we introduce a new class of functions that will accommodate a new variable $\zeta$.
 \begin{definition}\label{Fsubo}
 Let $\K$ be $\C$ or $\R$,  and  $X$ be a linear space over $\K$.

\begin{enumerate}[label=\tnum]
\item For $k\ge -1$, define $\widehat{\mathscr P}(k,X)$ to be the set of functions of the form 
\beq\label{pzedef} 
p(z,\zeta)=\sum_{(\alpha,\beta)\in S}  z^{\alpha}\zeta^\beta\xi_{\alpha,\beta}\text{ for }z\in (0,\infty)^{k+2},\zeta\in(0,\infty),
\eeq 
where $S$ is some finite subset of $\K^{k+2}\times\R$, and each $\xi_{\alpha,\beta}$ belongs to $X$.

\item Let $k\ge m\ge -1$ and $\mu\in\R$. 
Define $\widehat{\mathscr P}_{m}(k,\mu,X)$ to be set of functions of the form \eqref{pzedef},
where $S$ is a finite subset of ${\mathcal E}_\K(m,k,\mu)\times\R$ and each $\xi_{\alpha,\beta}$ belongs to $X$.
 \end{enumerate}
 \end{definition}

We call $\zeta$ in Definition \ref{Fsubo} the \textit{subordinate variable}.
Below are immediate observations about Definitions \ref{Fclass} and \ref{Fsubo}. 
Let 
$$(\tilde{\mathscr P}=\mathscr P,\hat k=k,\hat z=z)\text{ or }
(\tilde{\mathscr P}=\widehat{\mathscr P},\hat k=k+1,\hat z=(z,\zeta)).$$ 
\begin{enumerate}[label=(\alph*)]
 \item\label{Cb} Each $\tilde{\mathscr P}(k,X)$ is a linear space over $\K$.

\item Clearly,
$${\mathscr P}(k,X)\subset \widehat{\mathscr P}(k,X)\text{ and } {\mathscr P}_{m}(k,\mu,X)\subset \widehat{\mathscr P}_{m}(k,\mu,X).$$

 \item\label{Ca} $\tilde{\mathscr P}(k,X)$ contains all polynomials from $\R^{\hat k+2}$ to $X$, in the sense that, if $p:\R^{\hat k+2}\to X$ is a polynomial, then its restriction on $(0,\infty)^{\hat k+2}$ belongs to $\tilde{\mathscr P}(k,X)$.

 \item\label{Cc} If $k'>k\ge -1$, then, by the standard embedding
 $$ \K^{k+2}=\K^{k+2}\times\{0\}^{k'-k}\subset \K^{k'+2},$$ 
 one can embed  $\tilde{\mathscr P}(k,X)$ into $\tilde{\mathscr P}(k',X)$. 
 
 \item\label{Cd} One has
 \beq\label{qpequiv}
 \begin{aligned}
& q\in  \tilde{\mathscr P}_{m}(k,\mu,X) \text{ if and only if } \exists p\in  \tilde{\mathscr P}_{m}(k,0,X), q(\hat z)\equiv p(\hat z) z_m^{\mu}.
 \end{aligned}
 \eeq
In \eqref{qpequiv} above, $z=(z_{-1},z_0,\ldots,z_k)\in (0,\infty)^{k+2}$.

 \item \label{Ce} For any $k\ge m\ge 0$ and $\mu\in\R$, one has
\beq\label{Pm10}
\tilde{\mathscr P}_m(k,\mu,X)\subset \tilde{\mathscr P}_{-1}(k,0,X) . 
\eeq
\end{enumerate}

We recall Definition 10.6 of \cite{H5} which makes use of a real inner product space $X$ and its complexification $X_\C$. For the latter, see \cite[Section 77]{Halmos1974} or the review in \cite[Section 10]{H5}. For the purpose of this paper, we can simply consider $X=\R^n$ and $X_\C=\C^n$ for some $n\ge 1$.

 \begin{definition}\label{realF}
Let $X$ be an inner product space over $\R$, and $X_\C$ be its complexification.

\begin{enumerate} [label=\tnum]
\item For $k\ge -1$, define $\widehat{\mathscr P}(k,X_\C,X)$ to be the set of functions of the form  \eqref{pzedef},
where $S$ is a finite subset of $\C^{k+2}\times \R$ that preserves the conjugation,
and each $\xi_{\alpha,\beta}$ belongs to $X_\C$, with the conjugation condition
\beq\label{xiconj}
\xi_{\bar \alpha,\beta}=\overline{\xi_{\alpha,\beta}} \quad\forall (\alpha,\beta)\in S.
\eeq

\item For $k\ge m\ge -1$ and $\mu\in\R$, define $\widehat{\mathscr P}_{m}(k,\mu,X_\C,X)$ to be the set of functions in $\widehat{\mathscr P}(k,X_\C,X)$ with the restriction that $\alpha \in \mathcal E_\C(m,k,\mu)$ for any $(\alpha,\beta)\in S$.

\item The classes ${\mathscr P}(k,X_\C,X)$ and ${\mathscr P}_{m}(k,\mu,X_\C,X)$  are defined as in {\rm (i)} and {\rm (ii)}, respectively, but without $\zeta$ and $\beta$, that is, $(z,\zeta)$, $(\alpha,\beta)$, $\xi_{\alpha,\beta}$, $\C^{k+2}\times \R$
are replaced with $z$, $\alpha$, $\xi_\alpha$, $\C^{k+2}$, and $p=p(z)$. 
 \end{enumerate}
 \end{definition}

Because $S$ preserves the conjugation, if $(\alpha,\beta)\in S$, then
$(\bar\alpha,\beta)=\overline{(\alpha,\beta)}\in S$, hence, the term $\xi_{\bar\alpha,\beta}$ in \eqref{xiconj} exists.
Thanks additionally to the conjugation condition \eqref{xiconj}, we can rewrite $p(z,\zeta)$ in \eqref{pzedef} as
\beq\label{phalf} 
p(z,\zeta)
=\sum_{(\alpha,\beta)\in S} \frac12 \left ( z^{\alpha}\zeta^\beta\xi_{\alpha,\beta}
+  z^{\bar \alpha}\zeta^{\bar \beta}\xi_{\bar \alpha,\bar \beta} \right )
=\sum_{(\alpha,\beta)\in S} \frac12 \left ( z^{\alpha}\zeta^\beta\xi_{\alpha,\beta}
+ \overline{ z^{\alpha}\zeta^\beta\xi_{\alpha,\beta}} \right )
\eeq
for $z\in (0,\infty)^{k+2},\zeta\in(0,\infty)$.
Hence, each function $p$ in the class $\widehat{\mathscr P}(k,X_\C,X)$ is, in fact, $X$-valued.

It is clear that 
$${\mathscr P}(k,X_\C,X)\subset \widehat{\mathscr P}(k,X_\C,X)\text{ and }
{\mathscr P}_{m}(k,\mu,X_\C,X)\subset \widehat{\mathscr P}_{m}(k,\mu,X_\C,X).$$
Let $\tilde{\mathscr P}=\mathscr P$ or $\widehat{\mathscr P}$. Then
$$\tilde{\mathscr P}(k,X_\C,X)\subset \tilde{\mathscr P}(k,X_\C)\text{ and }
\tilde{\mathscr P}_{m}(k,\mu,X_\C,X)\subset \tilde{\mathscr P}_{m}(k,\mu,X_\C).$$
We also observe that the classes $\tilde{\mathscr P}(k,X_\C,X)$  and  $\tilde{\mathscr P}_{m}(k,\mu,X_\C,X)$ are (additive) subgroups of $\tilde{\mathscr P}(k,X_\C)$, but not linear spaces over $\C$.

In order for the scaling and shifting techniques in \cite{CaHK1} to work for our current problem, the leading term in the asymptotic expansion \eqref{fxp0} of $f(t)$ needs to be in a certain class of functions which we describe below. We will use the lexicography order for the real parts  of the power vectors $\alpha$ in \eqref{pzedef}. 

\begin{definition}\label{Pplus}
Define $\mathscr P^{+}(k)$ to be a subset of $\mathscr P(k,\C,\R)$ such that each element $p \in \mathscr P^{+}(k)$ has the form 
\beq \label{pzmax}
p(z)=z^{\alpha_{\max}} c_{\alpha_{\max}}+q(z),
\eeq 
where the power vector $\alpha_{\max}$ is real, the number $c_{\alpha_{\max}}$ is positive, the function $q$ belongs to  $\mathscr P(k,\C,\R)$ and has the representation as in \eqref{pzdef} with all $\Re(\alpha)<\alpha_{\max}$.

For $k\ge m\ge -1$, define 
\beq \label{strictPp}
    \mathscr P_m^{+}(k)=\{p\in \mathscr P^{+}(k)\cap \mathscr P_m(k,0,\C,\R):\text{ all power vector $\alpha$ in \eqref{pzdef} has $\alpha_{-1}=0$}\}.
\eeq
\end{definition}

The following remarks are in order.
\begin{enumerate}[label=\rnum]
    \item Let $p\in {\mathscr P}^+(k)$. Then there exist numbers $T>E_k(0)$ and $c\ge 1$ such that
\beq\label{plusest}
c^{-1} \widehat{\LL}_k(t)^{\alpha_{\max}}\le p(\widehat{\LL}_k(t))\le c \widehat{\LL}_k(t)^{\alpha_{\max}} \text{ for all } t\ge T,
\eeq 
and, consequently,
\beq\label{pluspos}
p(\widehat{\LL}_k(t))>0 \text{ for all } t\ge T.
\eeq 

\item Let $k\ge m\ge -1$ and $p\in {\mathscr P}_m^+(k)$. 
Note that the power vector $\alpha_{\max}$ in \eqref{pzmax} belongs to ${\mathcal E}_\R(m,k,0)$.
Thanks to this fact, \eqref{LLo} and \eqref{plusest}, one has
\beq \label{plusrate}
    p(\widehat{\LL}_k(t))^s=\bigo(\iln_m(t)^\delta) \text{ for any numbers  $s\in\R$ and $\delta>0$.}
\eeq

    \item Suppose $q(z)=p(z)\xi$, where $\xi$ is a vector in some linear space $X$, and the function $p$ satisfies the same conditions as in the definition of ${\mathscr P}^+(k)$ except that the coefficient $c_{\alpha_{\max}}$ in \eqref{pzmax} is negative instead of positive. Then we can rewrite 
$        q(z)=(-p(z))(-\xi)$,  where $-p\in {\mathscr P}^+(k)$.
\end{enumerate}

The new form of asymptotic expansion is the following extension of Definition \ref{Lexpand}.

\begin{definition}\label{zetxp}
Let $\K$ be $\R$ or $\C$, and $(X,\|\cdot\|_X)$ be a normed space over $\K$.
Suppose $g(t)$ is a function from $(T,\infty)$ to $X$ for some $T\in\R$, and $m_*\in \Z_+$. Let $\zeta_*\in \mathscr P_{m_*}^{+}(n_0)$ for some $n_0\ge m_*$. Denote 
\beq\label{Zbar} 
\Xi_k(t)=\left (\widehat \LL_k(t),\zeta_*(\widehat \LL_{n_0}(t))\right ) \text{ for }k\ge -1.
\eeq

\begin{enumerate}[label={\tnum}]
 \item\label{ze1} Let $(\gamma_k)_{k=1}^\infty$ be a divergent, strictly increasing sequence of positive numbers, and $(n_k)_{k=1}^\infty$ be a sequence in $\N$ with $n_k\ge \max\{m_*,n_0\}$ for all $k\ge 1$. 
We say
\beq\label{expan3}
g(t)\sim \sum_{k=1}^\infty p_k(\Xi_{n_k}(t)), \text{ where $p_k\in \widehat {\mathscr P}_{m_*}(n_k,-\gamma_k,X)$ for $k\in\N$, }
\eeq
if, for each $N\in\N$, there is some number $\mu>\gamma_N$ such that
\beq\label{errate3}
\left\|g(t) - \sum_{k=1}^N p_k(\Xi_{n_k}(t))\right\|_X=\bigo(\iln_{m_*}(t)^{-\mu}).
\eeq

\item\label{ze2} Let $N\in\N$, $(\gamma_k)_{k=1}^N$ be positive and strictly increasing, and  $n_*$ be an integer satisfying $n_*\ge \max\{m_*,n_0\}$.
We say
\beq\label{expan4}
g(t)\sim \sum_{k=1}^N p_k(\Xi_{n_*}(t)), \text{ where $p_k\in \widehat {\mathscr P}_{m_*}(n_*,-\gamma_k,X)$ for $1\le k\le N$, }
\eeq
if it holds for all $\mu>\gamma_N$ that
\beq\label{errate4}
\left\|g(t) - \sum_{k=1}^N p_k(\Xi_{n_*}(t))\right\|_X=\bigo(\iln_{m_*}(t)^{-\mu}).
\eeq
\end{enumerate}
\end{definition}

Thanks to \eqref{pluspos}, one has $\zeta_*(\widehat \LL_k(t))>0$ for sufficiently large $t$.
Thus, each term $ p_k(\Xi_{n_k}(t))$ in \eqref{expan3} and \eqref{errate3} is valid for sufficiently large $t$. The same applies to \eqref{expan4} and \eqref{errate4}.

By using the equivalence \eqref{qpequiv}, we have the following equivalent form of \eqref{expan1}
\beq\label{expan5}
g(t)\sim \sum_{k=1}^\infty \widehat p_k(\Xi_{n_k}(t))\iln_{m_*}(t)^{-\gamma_k}, 
\text{ where $\widehat p_k\in \widehat{\mathscr P}_{m_*}(n_k,0,X)$ for $k\in\N$. }
\eeq
For example, when $m_*=0$ the asymptotic expansion \eqref{expan5} reads as
\beq\label{clearpower}
g(t)\sim \sum_{k=1}^\infty \widehat p_k(\Xi_{n_k}(t)) t^{-\gamma_k},
\text{  where $\widehat p_k\in \widehat{\mathscr P}_{0}(n_k,0,X)$ for $k\in\N$. }
\eeq

Suppose $\widehat p_k$ in \eqref{expan5} is of the form 
$$\widehat p_k(z,\zeta)=\sum z^\alpha \zeta^\beta \xi_{\alpha,\beta},\text{ with $\xi_{\alpha,\beta}\ne 0$, as in \eqref{pzedef}.}$$
Thanks to estimate \eqref{plusest} applied to $p=\zeta_*$, we have that each term 
$(z^\alpha \zeta^\beta)|_{(z,\zeta)=\Xi_{n_k}(t)} \xi_{\alpha,\beta} $ of $\widehat p_k(\Xi_{n_k}(t))$ in \eqref{expan5} satisfies, for sufficiently large $t$,
\begin{multline}\label{termest1}
C^{-1} \widehat{\LL}_{n_k}(t)^{\Re(\alpha)} \widehat{\LL}_{n_0}(t)^{\beta \alpha_{\max}} \|\xi_{\alpha,\beta}\|_X
\le \Big \|(z^\alpha \zeta^\beta)|_{(z,\zeta)=\Xi_{n_k}(t)} \xi_{\alpha,\beta} \Big \|_X\\
= \widehat{\LL}_{n_k}(t)^{\Re(\alpha)} \zeta_*(\widehat{\LL}_{n_0}(t))^\beta \|\xi_{\alpha,\beta}\|_X
\le C \widehat{\LL}_{n_k}(t)^{\Re(\alpha)} \widehat{\LL}_{n_0}(t)^{\beta \alpha_{\max}} \|\xi_{\alpha,\beta}\|_X,
\end{multline}
where $C=c^\beta$ when $\beta\ge 0$, and $C=c^{-\beta}$ when $\beta< 0$. 
Above, $\alpha_{\max}$ is defined in Definition \ref{Pplus} for the function $\zeta_*$.
Note also that ${\Re(\alpha)}\in {\mathcal E}_\R(m_*,n_k,0)$ and $\beta \alpha_{\max}\in {\mathcal E}_\R(m_*,n_0,0)$.
Therefore, each term 
$$(z^\alpha \zeta^\beta)|_{(z,\zeta)=\Xi_{n_k}(t)} \xi_{\alpha,\beta} $$  does not contribute  any extra $\iln_{m_*}(t)^r$, with some $r\in\R$, to the decaying mode $\iln_{m_*}(t)^{-\gamma_k}$ in \eqref{expan5}.
This gives a justification to Definition \ref{zetxp} and, hence, also to Definition \ref{Lexpand}.

Combining \eqref{termest1}  with the limit \eqref{LLo}, we obtain
\beq \label{phates}
|\widehat p_k(\Xi_{n_k}(t))|=\bigo(\iln_{m_*}(t)^{\delta}) \text{ for all }\delta>0.    
\eeq
This also implies that the term $p_k(\Xi_{n_k}(t))$ in \eqref{expan4} satisfies
\beqs 
    |p_k(\Xi_{n_k}(t))|=\bigo(\iln_{m_*}(t)^{-\gamma_k+\delta}) \text{ for all }\delta>0.
\eeqs

\section{Main result}\label{mainresult}

In this section, we describe the key result about the asymptotic expansions, as $t\to\infty$, for solutions of equation \eqref{mainode}.
Let the space's dimension $n$ be fixed.

\begin{assumption}\label{assumpA}
 The matrix $A$ is a real $n\times n$ matrix with all eigenvalues having positive real parts.
\end{assumption}

This assumption is very standard for dissipative systems. It guarantees the decay of the associated semigroup $t\in \R\mapsto e^{-tA}$ as $t\to\infty$, see inequality \eqref{eA2} below.
Regarding the nonlinearity in \eqref{mainode}, the function $F$ will be approximated near the origin by functions, not necessarily polynomials,  in the following class.

\begin{definition}\label{phom} 
Suppose $(X,\|\cdot\|_X)$ and $(Y,\|\cdot\|_Y)$ be two real normed spaces.

A function $F:X\to Y$ is positively homogeneous of degree $\beta\ge 0$ if
\beq\label{Fb}
F(tx)=t^\beta F(x)\text{ for any $x\in X$ and any $t>0$.}
\eeq

Define $\mathcal H_\beta(X,Y)$ to be the set of positively homogeneous  functions of order $\beta$ from $X$ to $Y$, and  
denote $\mathcal H_\beta(X)=\mathcal H_\beta(X,X)$.

For a function $F\in \mathcal H_\beta(X,Y)$ that is  bounded on the unit sphere,  define
\beqs
\|F\|_{\mathcal H_\beta}=\sup_{\|x\|_X=1} \|F(x)\|_Y=\sup_{x\ne 0} \frac{\|F(x)\|_Y}{\|x\|_X^\beta}.
\eeqs
\end{definition}

The following are immediate properties.
\begin{enumerate}[label=\rnum]
 \item If $F\in \mathcal H_\beta(X,Y)$ with $\beta>0$, then taking $x=0$ and $t=2$ in \eqref{Fb} gives 
\beq\label{Fzero}  F(0)=0.
\eeq 
If, in addition, $F$ is bounded on the unit sphere, then  
\beq\label{Fhbound}
\|F\|_{\mathcal H_\beta}\in[0,\infty) \text{ and } 
\|F(x)\|_Y\le \|F\|_{\mathcal H_\beta} \|x\|_X^{\beta} \quad \forall x\in X.
\eeq

\item  The zero function (from $X$ to $Y$) belongs to $\mathcal H_\beta(X,Y)$ for all $\beta\ge 0$, and a constant function (from $X$ to $Y$) belongs to $\mathcal H_0(X,Y)$.

\item\label{ps} Each $\mathcal H_\beta(X,Y)$, for $\beta\ge 0$, is a linear space.

\item\label{pm} If $F_1\in \mathcal H_{\beta_1}(X,\R)$ and $F_2\in \mathcal H_{\beta_2}(X,Y)$, then $F_1F_2\in \mathcal H_{\beta_1+\beta_2}(X,Y)$.

\item\label{pp} If $F:X\to Y$ is a (nonzero) homogeneous polynomial of degree $m\in\Z_+$, then $F\in \mathcal H_m(X,Y)$. 
\end{enumerate}

The space $\mathcal H_\beta(X,Y)$ can contain much more complicated functions than homogeneous polynomials, see \cite{CaHK1}. 
We assume the following for the function $F$ in equation \eqref{mainode}.

\begin{assumption}\label{assumpG}  The mapping $F:\R^n\to\R^n$ has the the following properties. 
\begin{enumerate}[label=\tnum]
 \item \label{FL} $F$ is locally Lipschitz on $\R^n$ and $F(0)=0$.
 \item\label{GG} Either \ref{h1} or \ref{h2} below is satisfied.

    \begin{enumerate}[label={\rm (H\arabic*)}]
        \item\label{h1} There exist numbers $\beta_k$, for $k\in\N$, which belong to $(1,\infty)$ and  increase strictly to infinity, and functions $F_k\in \mathcal H_{\beta_k}(\R^n)\cap C^\infty(\R^n\setminus\{0\})$, for $k\in\N$,  such that it holds, for any $N\in\N$, that  
        \beq\label{errF}
            \left|F(x)-\sum_{k=1}^N F_k(x)\right|=\bigo(|x|^{\beta})\text{ as $x\to 0$, for some $\beta>\beta_N$.}
        \eeq

        \item\label{h2} There exist $N_*\in\N$, strictly increasing  numbers $\beta_k$ in $(1,\infty)$, 
        and functions $F_k\in \mathcal H_{\beta_k}(\R^n)\cap C^\infty(\R^n\setminus\{0\})$, for $k=1,2,\ldots,N_*$, such that  
        \beq\label{errF2}
            \left|F(x)-\sum_{k=1}^{N_*} F_k(x)\right|=\bigo(|x|^{\beta})\text{ as  $x\to 0$, for all $\beta>\beta_{N_*}$.}
        \eeq
    \end{enumerate}
\end{enumerate}
\end{assumption}

In Assumption \ref{assumpG}, the condition $F(0)=0$ in part \ref{FL} ensures that the associated homogeneous equation of  \eqref{mainode}, i.e. when $f(t)=0$, has zero as its equilibrium. Moreover, we will conveniently write case \ref{h1} in part \ref{GG} as
\beq\label{Gex}
F(x)\sim \sum_{k=1}^\infty F_k(x),
\eeq
and case \ref{h2} as
\beq\label{Gef}
F(x)\sim \sum_{k=1}^{N_*} F_k(x).
\eeq
The following remarks on Assumption \ref{assumpG} are in order.
\begin{enumerate}[label=\rnum]
 \item Applying \eqref{Fzero} and \eqref{Fhbound} to each  function $F_k$, one has 
\beq \label{Fkb}
    F_k(0)=0,\quad \|F_k\|_{\mathcal H_{\beta_k}}<\infty,\text{ and } |F_k(x)|\le \|F_k\|_{\mathcal H_{\beta_k}}|x|^{\beta_k} \text{ for all $x\in\R^n$.}
\eeq
This makes the remainder estimates in \eqref{errF} and \eqref{errF2} meaningful.

 \item With functions $F_k$ as in \ref{h2}, if $F(x)=\sum_{k=1}^{N_*} F_k(x)$, then $F$ satisfies \eqref{Gef}.

 \item If $F$ is a $C^\infty$-vector field on  $\R^n$ with $F(0)=0$ and $F'(0)=0$, then $F$ satisfies Assumption \ref{assumpG} with the right-hand side of \eqref{Gex}  simply being the Taylor expansion of $F(x)$ about the origin.  
 However, the class of functions $F$ that satisfy Assumption \ref{assumpG} contains much more than these smooth vector fields, see \cite[Section 6]{CaHK1}.
 
 \item We do not require the convergence of the formal series on the right-hand side of \eqref{Gex}.
Even when the convergence occurs, the limit is not necessarily the function $F(x)$. 
\end{enumerate}

By Assumption \ref{assumpG}, for each $N\in\N$ in the case \eqref{Gex}, or $ N\in\N\cap[1, N_*]$ in the case \eqref{Gef}, there is $\varep_N>0$ such that
\beq\label{Ner}
\left|F(x)-\sum_{k=1}^N F_k(x)\right|=\bigo(|x|^{\beta_N+\varep_N})\text{ as } x\to 0.
\eeq
Note from \eqref{Ner} with $N=1$ and the last inequality in \eqref{Fkb} that, as $x\to 0$, 
\beq\label{Gyy}
|F(x)|\le |F_1(x)|+|F(x)-F_1(x)|\le \|F_1\|_{\mathcal H_{\beta_1}}|x|^{\beta_1}+\bigo(|x|^{\beta_1+\varep_1})=\bigo(|x|^{\beta_1}).
\eeq

We  impose conditions on the forcing function $f(t)$ in equation \eqref{mainode} now.

\begin{assumption} \label{fmain}
The function $f$ belongs to $C([T,\infty),\R^n)$, for some $T\ge 0$, and there exists a number $m_*\in\Z_+$ such that $f(t)$  admits the asymptotic expansion, in the sense of Definition \ref{Lexpand} with  $X=\C^n$, 
\beq\label{fas1}
f(t)\sim \sum_{k=1}^\infty p_k(\widehat{\LL}_{\widetilde n_k}(t)), \text{ where $p_k\in \mathscr P_{m_*}(\widetilde n_k,-\gamma_k,\C^n,\R^n)$ for $k\in\N$, }
\eeq
or
\beq\label{fas2}
f(t)\sim \sum_{k=1}^K p_k(\widehat{\LL}_{n_*}(t)), \text{ where $p_k\in \mathscr P_{m_*}(n_*,-\gamma_k,\C^n,\R^n)$ for $1\le k\le K$, }
\eeq
for some integers $K\ge 1$.
\end{assumption}

Denote
\beq \label{n1def}
    n_1=\begin{cases} \widetilde n_1,&\text{ in the case   \eqref{fas1},}\\
                      n_*,&\text{ in the case  \eqref{fas2}.}
     \end{cases}
\eeq
Unlike in the previous work \cite{CaH3,H5}, Assumption \ref{fmain} is not enough and the following additional condition is needed.

\begin{assumption}\label{aspone}
We assume  
\beq \label{pocond}
 p_1(z)=p_0(z) z_{m_*}^{-\gamma_1}\xi_0,\text{ where }p_0 \in {\mathscr P}_{m_*}^+(n_1),\ \xi_0\in \R^n\setminus\{0\},
 \eeq 
 and $z=(z_{-1},z_0,\ldots,z_{n_1})\in (0,\infty)^{n_1+2}$.
 \end{assumption}

Consider the ODE system \eqref{mainode} in $\R^n$ and a solution $y(t)\in\R^n$ such that
\beq \label{soln}
\text{$y\in C^1([T_0,\infty))$  satisfying equation \eqref{mainode} on $(T_0,\infty)$, for some $T_0>0$,}    
\eeq
and 
\beq \label{decay}
    \lim_{t\to\infty} y(t)=0.
\eeq
The main result of the paper is the following.

\begin{theorem}[Main Theorem]\label{mainthm}
Under Assumptions  \ref{assumpA}, \ref{assumpG}, \ref{fmain} and \ref{aspone},
 there exist a divergent, strictly increasing sequence $(\mu_k)_{k=1}^\infty\subset (0,\infty)$,
an increasing sequence $(n_k)_{k=0}^\infty\subset \Z_+\cap[m_*,\infty)$, a function
\beq \label{zestacond}
    \zeta_*\in \mathscr P_{m_*}^+(n_0),
\eeq 
and functions
 \beq \label{qkhc}
 q_k\in \widehat{\mathscr P}_{m_*}(n_k,-\mu_k,\C^n,\R^n)\text{ for all $k\in\N$,}
 \eeq 
  such that any solution  $y(t)$  of \eqref{mainode} as in \eqref{soln}  and \eqref{decay} admits the asymptotic expansion  
   \beq \label{solnxp}
  y(t)\sim \sum_{k=1}^\infty q_k(\Xi_{n_k}(t)) \text{ in the sense of Definition \ref{zetxp},}
  \eeq
    where $\Xi_k(t)$ is defined by \eqref{Zbar}.
  \end{theorem}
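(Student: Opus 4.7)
The strategy is to establish the expansion iteratively: first get the leading asymptotic behavior and identify $\xi_\ast$ and $\zeta_\ast$, then inductively build $q_k$ by matching successive decay orders in the ODE.

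First I would invoke Theorem~\ref{thmap1} to obtain the leading order. Because $\Re\sigma(A)>0$, Assumption~\ref{assumpA} makes $-A$ generate a contraction on the stable side, and the variation-of-constants formula together with the decay \eqref{decay} gives
\[
y(t)=\int_t^\infty e^{-A(\tau-t)}\bigl[F(y(\tau))+f(\tau)\bigr]\,\d\tau.
\]
Using \eqref{Gyy} to absorb $F(y)$ as a remainder of strictly higher order in the small-$y$ regime, and substituting the leading term $p_1(z)=p_0(z)z_{m_\ast}^{-\gamma_1}\xi_0$ from Assumption~\ref{aspone}, the integral estimates in Lemma~\ref{newplem} and Lemma~\ref{plnlem} yield
\[
y(t)=\zeta_\ast(\widehat\LL_{n_0}(t))\iln_{m_\ast}(t)^{-\gamma_1}\xi_\ast+R(t),
\]
where $\zeta_\ast=p_0\in\mathscr P_{m_\ast}^{+}(n_0)$, the vector $\xi_\ast$ is obtained by inverting the linear action of $A$ on $\xi_0$ (it is nonzero thanks to invertibility of $A$ and the positivity of $c_{\alpha_{\max}}$ built into $\mathscr P_{m_\ast}^{+}$), and $R(t)$ decays at a strictly faster rate. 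This fixes $\zeta_\ast$ and the subordinate variable for all subsequent steps.

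Next I would set up the scaling–shifting that motivated the new function class. Writing $\theta(t)=\zeta_\ast(\widehat\LL_{n_0}(t))\iln_{m_\ast}(t)^{-\gamma_1}$ and $y(t)=\theta(t)\xi_\ast+w(t)$, positive homogeneity gives $F_k(y)=\theta^{\beta_k}F_k(\xi_\ast+\theta^{-1}w)$. Since $\xi_\ast\neq0$ and $F_k\in C^\infty(\R^n\setminus\{0\})$, the inner factor admits a smooth Taylor expansion about $\xi_\ast$, so the expansion of $F_k(y)$ produces coefficients containing $\theta^{\beta_k-j}=\zeta_\ast^{\beta_k-j}\iln_{m_\ast}(t)^{-\gamma_1(\beta_k-j)}$ for $j\in\Z_+$. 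The non-integer powers of $\zeta_\ast$ that appear here are precisely the objects accommodated by the class $\widehat{\mathscr P}_{m_\ast}$ via the subordinate variable $\zeta$, while the $\iln_{m_\ast}(t)$-exponent $-\gamma_1(\beta_k-j)$ feeds into the decay mode counter.

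Now I would carry out the inductive construction promised by Subsection~\ref{construct}. Suppose that for some $N$ we have obtained $\mu_1<\cdots<\mu_N$, indices $n_1\le\cdots\le n_N$, and functions $q_k\in\widehat{\mathscr P}_{m_\ast}(n_k,-\mu_k,\C^n,\R^n)$ such that the remainder $w_N(t)=y(t)-\sum_{k=1}^Nq_k(\Xi_{n_k}(t))$ obeys $|w_N(t)|=\bigo(\iln_{m_\ast}(t)^{-\mu_N-\varepsilon})$ for some $\varepsilon>0$. Plugging this decomposition into \eqref{mainode} and collecting, via the Taylor-plus-homogeneity procedure of the previous paragraph, all explicit contributions at the next decay order $\mu_{N+1}$ coming from $f$, from the cross terms in $F_k(\sum q_j+w_N)$, and from the derivative $\sum q_k'$, I would apply Theorem~\ref{newlin} to the linear model $q_{N+1}'+Aq_{N+1}=(\text{collected forcing})$ to produce $q_{N+1}\in\widehat{\mathscr P}_{m_\ast}(n_{N+1},-\mu_{N+1},\C^n,\R^n)$; the complexification machinery of Subsection~\ref{comfysec} delivers the conjugation condition~\eqref{xiconj} and hence $\R^n$-valuedness even when $A$ has non-real eigenvalues. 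The estimates of Lemma~\ref{newdq} then give the sharpened remainder bound $|w_{N+1}(t)|=\bigo(\iln_{m_\ast}(t)^{-\mu_{N+1}-\varepsilon'})$, closing the induction.

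\emph{Main obstacle.} The delicate part is not any single application of Theorem~\ref{newlin}, but the combinatorial bookkeeping: after substituting $\sum q_k(\Xi)+w_N$ into each $F_k$ and expanding by homogeneity and smooth Taylor series about $\xi_\ast$, I must verify that the result is a \emph{finite} sum lying in some $\widehat{\mathscr P}_{m_\ast}(n,-\mu,\C^n,\R^n)$, and that the full set of decay exponents generated by this process admits a unique minimum at each stage and forms a sequence $\mu_k\nearrow\infty$ with no finite accumulation point. This requires a careful tracking of how the real parts of $\widehat\LL$-exponents and the real powers of $\zeta$ combine with the factors $\beta_k-j$ coming from homogeneity; the well-orderedness ultimately follows from $\beta_k\nearrow\infty$ in Assumption~\ref{assumpG} and the strict monotonicity of $\gamma_k$ in Assumption~\ref{fmain}, together with observation \eqref{phates} that the subordinate variable contributes no extra logarithmic order. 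Once this structural lemma is in place, Theorem~\ref{mainthm} follows by the induction above as the detailed version Theorem~\ref{mainthm2}.
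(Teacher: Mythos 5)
Your proposal tracks the paper's overall strategy: first use Theorem~\ref{thmap1} to extract the leading term $q_1=\mathcal Z_Ap_1$, from which $\xi_\ast=A^{-1}\xi_0$ and $\zeta_\ast=p_0$ emerge; then do scaling-and-shifting, Taylor-expanding each $F_r$ about $\xi_\ast$ so that the powers $\theta^{\beta_r-j}=\zeta_\ast^{\beta_r-j}\iln_{m_\ast}^{-\gamma_1(\beta_r-j)}$ land in the new class $\widehat{\mathscr P}_{m_\ast}$; then induct, applying Theorem~\ref{newlin} at each stage and invoking the complexification of Subsection~\ref{comfysec} to preserve $\R^n$-valuedness. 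Your ``main obstacle'' paragraph correctly identifies the genuine difficulty.

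However, you leave that obstacle unresolved, and resolving it is the technical heart of the proof. The paper's solution is Definition~\ref{SS1}: define $\widetilde{\mathcal S}$ \emph{in advance} as the additive subsemigroup of $[0,\infty)$ generated by $\{\gamma_k-\gamma_1\}$, $\{\widetilde\beta_j\gamma_1\}$ and (when $m_\ast=0$) the unit $1$; arrange it as the increasing sequence $(\widetilde\mu_k)$ and set $\mu_k=\widetilde\mu_k+\gamma_1$. Closure under addition (property~\eqref{Sprop}) is exactly what guarantees that the exponents produced by composing the $q_{k_j}$'s through $\mathcal F_{r,m}$ and multiplying by $\theta^{\beta_r-m}$ land back in $\mathcal S$; finiteness of the summations \eqref{Qkdef} then follows from \eqref{d1}--\eqref{d2}. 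Without the semigroup picture one cannot certify that the ``next decay order $\mu_{N+1}$'' is uniquely determined or that the set of exponents is discrete. The $q_k$ themselves are then defined by an explicit recursion \eqref{qkd}, $q_k=\mathcal Z_A(\mathcal Q_k+p_k-\chi_k)$, \emph{before} the induction, and Proposition~\ref{qcpropo} verifies class membership separately; Theorem~\ref{newlin} is applied to the remainder $v_N$, not to $q_{N+1}$ as an ODE unknown.

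A second, smaller omission: you mention collecting contributions from ``the derivative $\sum q_k'$'' but do not notice that Lemma~\ref{newdq} produces, beyond $\mathcal M_{-1}q_k$, the terms $\mathcal Rq_k+\mathcal R\zeta_\ast\,\partial_\zeta q_k$ of decay order $\mu_k+1$. When $m_\ast=0$ these are \emph{not} negligible relative to $\iln_{m_\ast}$: they must be tracked as the correction functions $\chi_k$ defined in \eqref{chikdef}, and the exponent set must be closed under the unit increment (built into Definition~\ref{SS1} via the $(1-\operatorname{sgn}(m_\ast))\kappa$ term). When $m_\ast\ge1$ they decay super-polynomially in $\iln_{m_\ast}$ and can be absorbed into the error. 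Your sketch does not make this case distinction, and carrying out the induction without the $\chi_k$ bookkeeping would leave unmatched terms at order $\mu_k+1$ in the $m_\ast=0$ case.
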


In fact, Theorem \ref{mainthm} is a concise version of Theorem \ref{mainthm2} in section \ref{asympsec} below, where explicit constructions of the functions $\zeta_*$ and $q_k$ will be given.
Some examples will be given in Example \ref{examples} below.

\begin{remark} 
We compare the above result with recent work on the subject.
The paper \cite{CaHK1} requires the matrix  $A$  to be diagonalizable with positive eigenvalues which is stricter than Assumption \ref{assumpA}. Moreover, it treats the case $f(t)=0$ only.
The paper \cite{H5} requires the function $F$ to have the approximation \eqref{Gex} with each $F_k$ being a homogeneous polynomial of degree $k+1$. Such a condition is  much more than what we need in Assumption \ref{assumpG}. 
However, we, in turn, require an additional  Assumption \ref{aspone} for the forcing function $f(t)$.
The paper \cite{H6} deals with the NSE \eqref{NSE} which is a system of PDE not ODE, but the nonlinear term $B(y,y)$ is a bilinear form, not a general function $F(y)$.
 \end{remark}

\section{Fundamental issues}\label{tools}

We assume throughout this section the following Assumption \ref{complexA}.

  \begin{assumption}\label{complexA}
  Let $A$ be a complex $n\times n$ matrix with all eigenvalues having positive real parts.
\end{assumption}

Denote by $\lambda_1$ the minimum of the real parts of the eigenvalues of $A$. Then, for any $\lambda\in(0,\lambda_1)$, there is $C_\lambda>0$ such that
\beq\label{eA2}
\left |e^{-tA}\right|\le C_\lambda  e^{-\lambda t}\quad \text{ for all } t\ge 0.
\eeq
Indeed, estimate \eqref{eA2} comes from the fact that, thanks to the Jordan normal form of $A$, the exponential $e^{-tA}$ is a finite sum of the terms of the form $t^m e^{-\Lambda t}M$, where  $m\in\Z_+$, $\Lambda$ is an an eigenvalue of $A$ and $M$ is a complex $n\times n$  matrix.

\subsection{Important linear operators}
The following linear operators are crucial in the construction of $q_k$ in Theorem \ref{mainthm}.

\begin{definition}\label{newop}
Given an integer $k\ge -1$, let $p\in \widehat{\mathscr P}(k,\C^n)$ be given by \eqref{pzdef} with   $\K=\C$, $X=\C^n$, and $z$ and $\alpha$ as in \eqref{azvec}. 

Define, for $j=-1,0,\ldots,k$, the function $\mathcal M_jp:(0,\infty)^{k+2}\times(0,\infty)\to \C^n$ by 
\beq\label{MM}
(\mathcal M_jp)(z,\zeta)=\sum_{(\alpha,\beta)\in S} \alpha_j z^\alpha \zeta^\beta \xi_{\alpha,\beta}.
\eeq 

In the case $k\ge 0$, define the function $ \mathcal R  p:(0,\infty)^{k+2}\times(0,\infty)\to \C^n$ by 
 \beq\label{chiz}
 (\mathcal R p)(z,\zeta)=
  \sum_{j=0}^k z_0^{-1}z_1^{-1}\ldots z_{j}^{-1}(\mathcal M_j p)(z,\zeta).
 \eeq 

In the case $p\in \widehat{\mathscr P}_{-1}(k,0,\C^n)$, define the function $\mathcal Z_Ap:(0,\infty)^{k+2}\times(0,\infty)\to \C^n$ by 
 \beq\label{ZAp}
 (\mathcal Z_Ap)(z,\zeta)=\sum_{(\alpha,\beta)\in S}  z^{\alpha} \zeta^\beta (A+\alpha_{-1}I_n)^{-1} \xi_{\alpha,\beta}.
 \eeq
\end{definition}

In particular, when $j=-1$, \eqref{MM} reads as
\beq \label{Mminus}
    \mathcal M_{-1}p(z,\zeta)=\sum_{(\alpha,\beta)\in S} \alpha_{-1} z^\alpha\zeta^\beta \xi_{\alpha,\beta}.
\eeq

An equivalent definition of $(\mathcal Rp)(z,\zeta)$ in \eqref{chiz} is
\beqs
 (\mathcal R p)(z,\zeta)=\frac{\partial p(z,\zeta)}{\partial z_0}+
  \sum_{j=1}^k z_0^{-1}z_1^{-1}\ldots z_{j-1}^{-1}\frac{\partial p(z,\zeta)}{\partial z_j}.
\eeqs 

Thanks to \eqref{Eminus}, one has  $\Re(\alpha_{-1})=0$ in \eqref{ZAp}  for all $(\alpha,\beta)\in S $.
Combining this with Assumption \ref{complexA}, we have  $A+\alpha_{-1}I_n$ is invertible and definition \eqref{ZAp} is valid.
From \eqref{ZAp}, one has $\mathcal Z_Ap\in \widehat{\mathscr P}_{-1}(k,0,\C^n)$. Clearly,
\beq\label{ZAM}
 (A+\mathcal M_{-1})(\mathcal Z_Ap)=\mathcal Z_A((A+\mathcal M_{-1})p)=p\quad \forall p\in \mathscr P_{-1}(k,0,\C^n).
 \eeq
 
 By the mappings $p\mapsto \mathcal M_j p$, $p\mapsto \mathcal Rp$, and $p\mapsto \mathcal Z_A p$, one can define
 linear operator $\mathcal M_j$, for $-1\le j\le k$, on $\widehat{\mathscr P}(k,\C^n)$,
  linear operator $\mathcal R$ on $\widehat{\mathscr P}(k,\C^n)$ for $k\ge 0$,
 and linear operator  $\mathcal Z_A$ on $\widehat{\mathscr P}_{-1}(k,0,\C^n)$ for $k\ge -1$.
In fact, the operators $\mathcal M_j$, $\mathcal R$, $\mathcal Z_A$  only act on the $z$ variable.
If $p\in \mathscr P(k,\C^n)$, then $p=p(z)$ and these operators  are exactly the same as in \cite[Definition 5.2]{H5}.
The following properties of $\mathcal M_j$, $\mathcal R$, $\mathcal Z_A$ are obvious from their definitions.

\begin{lemma}\label{MRZgen}
Let $\tilde{\mathscr P}=\mathscr P$ or $\widehat{\mathscr P}$. One has the following.
\begin{enumerate}[label=\rnum]
\item \label{R0} For $k\ge m\ge 0$ and $\mu\in\R$,   if $p$ is in $\tilde{\mathscr P}_{m}(k,\mu,\C^n)$,  then all $\mathcal M_jp$, for $-1\le j\le k$,  and $\mathcal Z_Ap$  are also in $\tilde{\mathscr P}_{m}(k,\mu,\C^n)$. 

\item\label{R1} 
 $\mathcal R p(z)$ has the same powers of  $z_{-1}$ as $p(z)$.
 
\item \label{R2}   
If $p\in\tilde{\mathscr P}_0(k,\mu,\C^n)$, then $\mathcal R p\in\tilde{\mathscr P}_0(k,\mu-1,\C^n)$.
\end{enumerate}
\end{lemma}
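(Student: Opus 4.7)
The plan is to verify each of the three assertions by direct inspection of the definitions \eqref{MM}, \eqref{chiz}, \eqref{ZAp}, since the lemma is really a bookkeeping statement about how the power vectors behave under the three operators. A useful preliminary observation is that $\mathcal{M}_j$, $\mathcal{R}$, and $\mathcal{Z}_A$ all act trivially on the subordinate variable $\zeta$: the factor $\zeta^\beta$ in a monomial $z^\alpha \zeta^\beta \xi_{\alpha,\beta}$ is simply carried through unchanged. Consequently, the $\widehat{\mathscr{P}}$ case reduces to the $\mathscr{P}$ case term by term, and it suffices to track what happens to the power vector $\alpha$ and the coefficient $\xi_{\alpha,\beta}$.

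For part \ref{R0}, I would start from the representation $p(z,\zeta) = \sum_{(\alpha,\beta)\in S} z^\alpha \zeta^\beta \xi_{\alpha,\beta}$ with $S \subset \mathcal{E}_\mathbb{C}(m,k,\mu)\times\mathbb{R}$. The formula \eqref{MM} shows that $\mathcal{M}_j p$ has exactly the same set of power vectors (possibly with some annihilated when $\alpha_j = 0$) and new coefficients $\alpha_j \xi_{\alpha,\beta}$, so $\mathcal{M}_j p \in \tilde{\mathscr{P}}_m(k,\mu,\mathbb{C}^n)$ immediately. For $\mathcal{Z}_A p$, the power vectors are again unchanged and the coefficients become $(A + \alpha_{-1} I_n)^{-1}\xi_{\alpha,\beta}$; the invertibility of $A + \alpha_{-1} I_n$ is guaranteed by Assumption \ref{complexA} together with $\Re(\alpha_{-1}) = 0$, as already noted after \eqref{ZAp}.

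For part \ref{R1}, I would use the equivalent expression for $\mathcal{R} p$ as a combination of partial derivatives with respect to $z_0, z_1, \ldots, z_k$ only (never with respect to $z_{-1}$) multiplied by factors $z_0^{-1} \cdots z_{j-1}^{-1}$. Since $z_{-1}$ appears neither in the differentiation variables nor in the multiplicative factors, the exponent of $z_{-1}$ in each term of $\mathcal{R} p$ coincides with that of the corresponding term of $p$. For part \ref{R2}, I would apply $\mathcal{R}$ to a single monomial $z^\alpha \zeta^\beta \xi_{\alpha,\beta}$ with $\alpha \in \mathcal{E}_\mathbb{C}(0,k,\mu)$: the term from $\partial/\partial z_0$ produces $\alpha_0 z_0^{-1} z^\alpha \zeta^\beta \xi_{\alpha,\beta}$, and the term with prefactor $z_0^{-1} \cdots z_{j-1}^{-1}$ and $\partial/\partial z_j$ produces $\alpha_j z_0^{-1} \cdots z_j^{-1} z^\alpha \zeta^\beta \xi_{\alpha,\beta}$. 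In every case the resulting monomial has a new power vector $\alpha'$ with $\alpha'_{-1} = \alpha_{-1}$ (so $\Re(\alpha'_{-1}) = 0$) and $\Re(\alpha'_0) = \Re(\alpha_0) - 1 = \mu - 1$, which places $\mathcal{R} p$ in $\tilde{\mathscr{P}}_0(k,\mu-1,\mathbb{C}^n)$.

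Since the whole argument is a matter of reading off the effect of each operator on the exponents, there is no real obstacle; the only point requiring a brief caveat is that some of the coefficients $\alpha_j$ in \eqref{MM} and \eqref{chiz} may vanish, but this only deletes terms from $S$ and never violates membership in the target class.
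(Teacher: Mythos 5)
Your proof is correct and follows exactly the route the paper intends: the paper simply states that these properties ``are obvious from their definitions'' and gives no proof, and your write-up is the explicit verification of that claim, tracking the power vectors $\alpha$ under each operator and noting that $\zeta^\beta$ is carried through unchanged. The small caveats you raise (vanishing coefficients $\alpha_j$, and well-definedness of $\mathcal{Z}_A p$ via the inclusion $\tilde{\mathscr{P}}_m(k,\mu,\C^n)\subset\widehat{\mathscr{P}}_{-1}(k,0,\C^n)$ for $m\ge 0$) are exactly the right points to flag.
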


Note in \ref{R0} of Lemma \ref{MRZgen} that $\mathcal Z_Ap$  is validly defined thanks to the inclusion \eqref{Pm10}.
Based on Lemma \ref{MRZgen}, we can assert further properties of the mappings $\mathcal M_j$, $\mathcal R$ and $\mathcal Z_A$ when restricted to the classes in Definition \ref{realF}.

\begin{lemma}
\label{invar2}
Let $\tilde{\mathscr P}=\mathscr P$ or $\widehat{\mathscr P}$. 
Below, $\mathcal Z_A$ is considered only under Assumption \ref{assumpA}.

The following statements hold true.
\begin{enumerate}[label=\tnum]
\item\label{invi}  
	Each $\mathcal M_j$, for $-1\le j\le k$, maps $\tilde{\mathscr P}(k,\C^n,\R^n)$ into itself, 
	  $\mathcal R$ maps $\tilde{\mathscr P}(k,\C^n,\R^n)$, for $k\ge 0$, into itself, 
	and   $\mathcal Z_A$ maps $\tilde{\mathscr P}_{-1}(k,0,\C^n,\R^n)$, for $k\ge -1$,  into itself. 
\item\label{invii} 
	All $\mathcal M_j$, for $-1\le j\le k$,  and $\mathcal Z_A$ 
map  $\tilde{\mathscr P}_{m}(k,\mu,\C^n,\R^n)$ into itself for any integers $k\ge m\ge 0$ and real number $\mu$.
\item \label{inviii} 
	 $\mathcal R$ 
maps  $\tilde{\mathscr P}_{0}(k,\mu,\C^n,\R^n)$ into $\tilde{\mathscr P}_{0}(k,\mu-1,\C^n,\R^n)$ for any  $k\in\Z_+$ and $\mu\in\R$. 
\end{enumerate}
\end{lemma}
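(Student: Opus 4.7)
The plan is to use Lemma \ref{MRZgen}, which already establishes the power-vector structure and decay exponents of the images in the complex classes $\tilde{\mathscr P}_m(k,\mu,\C^n)$, where $\tilde{\mathscr P}$ stands for either $\mathscr P$ or $\widehat{\mathscr P}$. So the only new content in Lemma \ref{invar2} is to verify that the real structure imposed in Definition \ref{realF}—the conjugation symmetry of the index set $S$ together with the coefficient condition \eqref{xiconj}—is preserved by each of the three operators. Because $\mathcal M_j$, $\mathcal R$, and $\mathcal Z_A$ act only on the $z$ variable and on the coefficients $\xi_{\alpha,\beta}$, leaving the exponent $\beta$ of the subordinate variable $\zeta$ untouched, one argument works uniformly for both $\mathscr P$ and $\widehat{\mathscr P}$.

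For $\mathcal M_j$, applied to $p(z,\zeta) = \sum_{(\alpha,\beta)\in S} z^\alpha \zeta^\beta \xi_{\alpha,\beta}$, the new coefficient at the index $(\alpha,\beta)$ is $\alpha_j \xi_{\alpha,\beta}$, on the same index set $S$. Conjugation-symmetry of $S$ is then trivially preserved, and \eqref{xiconj} follows from the direct computation
\[
\bar\alpha_j\,\xi_{\bar\alpha,\beta} = \bar\alpha_j\,\overline{\xi_{\alpha,\beta}} = \overline{\alpha_j\,\xi_{\alpha,\beta}}.
\]
The operator $\mathcal R$ is a finite sum of $\mathcal M_j p$'s, each multiplied by the real monomial $z_0^{-1}\cdots z_j^{-1}$; this shifts the power vector by the real integer vector $(0,-1,\ldots,-1,0,\ldots,0)$, an operation that commutes with conjugation on $\alpha$ and does not touch the coefficients. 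The conjugation structure is thus inherited from the already verified $\mathcal M_j$ case, and the ``shape'' statements in \ref{invi} and \ref{inviii} follow from Lemma \ref{MRZgen}\ref{R0}, \ref{R1}, \ref{R2}.

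The only place where a genuinely new ingredient is required is $\mathcal Z_A$, and this is where Assumption \ref{assumpA}—the reality of $A$—enters. For any complex $s$ such that $A+sI_n$ is invertible, reality of $A$ gives $\overline{A+sI_n} = A+\bar s I_n$, hence $\overline{(A+sI_n)^{-1}} = (A+\bar s I_n)^{-1}$. Applied to $s = \alpha_{-1}$, this yields
\[
(A+\bar\alpha_{-1}I_n)^{-1}\,\xi_{\bar\alpha,\beta} = \overline{(A+\alpha_{-1}I_n)^{-1}}\cdot\overline{\xi_{\alpha,\beta}} = \overline{(A+\alpha_{-1}I_n)^{-1}\,\xi_{\alpha,\beta}},
\]
which is exactly \eqref{xiconj} for the coefficients of $\mathcal Z_A p$. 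I expect no serious obstacle beyond the bookkeeping; the one point to watch is that the invertibility condition $\Re(\alpha_{-1})=0$ needed to define $\mathcal Z_A p$ is automatic in \ref{invi} (where $p\in\tilde{\mathscr P}_{-1}(k,0,\C^n,\R^n)$ by hypothesis) and in \ref{invii} (where $m\ge 0$ forces $\Re(\alpha_{-1})=0$ via the inclusion \eqref{Pm10}), so the asymmetry in the statement between the three operators is cosmetic rather than substantive.
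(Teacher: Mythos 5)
Your argument is correct and complete. The paper itself does not spell out the proof: it simply cites Lemma~10.7 of \cite{H5} for the case $\tilde{\mathscr P}=\mathscr P$ and then remarks that the case $\tilde{\mathscr P}=\widehat{\mathscr P}$ is proved identically because $\zeta>0$ and $\beta\in\R$ in \eqref{pzedef}. You have unpacked the argument that this citation stands in for, and your observation that $\mathcal M_j$, $\mathcal R$, $\mathcal Z_A$ leave the $\zeta$-exponent $\beta$ untouched is precisely the point the paper gestures at with its one-line remark. Beyond the class-preservation facts already in Lemma~\ref{MRZgen}, the only genuine content is checking that conjugation symmetry of $S$ and the coefficient condition \eqref{xiconj} survive each operator; your three computations do exactly that, and you correctly isolate the reality of $A$ as the single place where Assumption~\ref{assumpA} (as opposed to the weaker Assumption~\ref{complexA}) is actually used, via $\overline{(A+\alpha_{-1}I_n)^{-1}}=(A+\bar\alpha_{-1}I_n)^{-1}$. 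Your closing remark that the invertibility requirement $\Re(\alpha_{-1})=0$ is automatic in parts \ref{invi} and \ref{invii} (via \eqref{Pm10}) is also right. In short, this is a self-contained proof of what the paper outsources, rather than a different route.
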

\begin{proof}
For the case $\tilde{\mathscr P}={\mathscr P}$, this lemma is Lemma 10.7 of \cite{H5}.
    For the case $\tilde{\mathscr P}=\widehat{\mathscr P}$, the proof is the same thanks to the fact $\zeta>0$ and $\beta\in\R$ in \eqref{pzedef}. 
\end{proof}

\subsection{Asymptotic approximation for linear equations}
We recall here two relevant results from \cite{H5} that deal with functions in Definition \ref{Fclass}.

\begin{lemma}[{\cite[Lemma 5.6]{H5}}]\label{logode}
If $k\in \Z_+$ and $q\in \mathscr P(k,\C^n)$, then
 \beq\label{dq0}
\ddt q(\widehat \LL_k(t))=\mathcal M_{-1} q(\widehat \LL_k(t))+\mathcal Rq(\widehat \LL_k(t)) \text{ for }t>E_k(0).
 \eeq
In particular, when  $k\ge m\ge 1$, $\mu\in\R$, and $q\in\mathscr P_{m}(k,\mu,\C^n)$, one has
 \beqs
 \ddt q(\widehat \LL_k(t))=\mathcal M_{-1} q(\widehat \LL_k(t))+\bigo(t^{-\gamma})\quad
 \text{for all }\gamma\in(0,1).
 \eeqs
 \end{lemma}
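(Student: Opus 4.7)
The plan is to verify the identity \eqref{dq0} monomial by monomial, then read off the particular case. Since $q(z) = \sum_{\alpha \in S} z^\alpha \xi_\alpha$ is a finite sum and $t \mapsto \widehat\LL_k(t)^\alpha$ is smooth for $t > E_k(0)$, both assertions reduce to computing $\ddt \widehat\LL_k(t)^\alpha$ for a single power vector $\alpha$ and then assembling the pieces against the coefficients $\xi_\alpha$.

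First I would establish, by an easy induction on $j$ starting from $\iln_{j+1}(t) = \ln \iln_j(t)$, the iterated-log derivative formula
\[
\iln_{-1}'(t) = e^t, \quad \iln_0'(t) = 1, \quad \iln_j'(t) = \prod_{i=0}^{j-1}\iln_i(t)^{-1} \text{ for } j \ge 1.
\]
Dividing by $\iln_j(t)$ gives the logarithmic derivatives $\iln_{-1}(t)^{-1}\iln_{-1}'(t) = 1$ and $\iln_j(t)^{-1}\iln_j'(t) = \iln_0(t)^{-1}\cdots\iln_j(t)^{-1}$ for $j \ge 0$. Applying the product rule to $\widehat\LL_k(t)^\alpha = \prod_{j=-1}^{k}\iln_j(t)^{\alpha_j}$ then yields
\[
\ddt \widehat\LL_k(t)^\alpha = \alpha_{-1}\widehat\LL_k(t)^\alpha + \sum_{j=0}^{k} \alpha_j \iln_0(t)^{-1}\cdots\iln_j(t)^{-1}\widehat\LL_k(t)^\alpha.
\]
Summing against $\xi_\alpha$ over $\alpha \in S$, the first piece is exactly $\mathcal M_{-1} q(\widehat\LL_k(t))$ by \eqref{Mminus}, while the second precisely matches the definition \eqref{chiz} of $\mathcal R q(\widehat\LL_k(t))$. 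This proves \eqref{dq0}.

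For the particular case $q \in \mathscr P_m(k,\mu,\C^n)$ with $m \ge 1$, the identity reduces everything to estimating $\mathcal R q(\widehat\LL_k(t))$. Every term there carries a prefactor $\iln_0(t)^{-1}\cdots\iln_j(t)^{-1}$ with $j \ge 0$, the slowest-decaying of which is $\iln_0(t)^{-1} = t^{-1}$. Meanwhile $\alpha \in \mathcal E_\C(m,k,\mu)$ with $m \ge 1$ forces $\Re(\alpha_{-1}) = \Re(\alpha_0) = 0$, so $|\widehat\LL_k(t)^\alpha|$ grows at most polylogarithmically, giving $|\widehat\LL_k(t)^\alpha| = \bigo(t^\delta)$ for every $\delta > 0$ by \eqref{LLo}. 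Combining yields $\mathcal R q(\widehat\LL_k(t)) = \bigo(t^{-1+\delta})$ for every $\delta > 0$, which implies the desired $\bigo(t^{-\gamma})$ bound for any $\gamma \in (0,1)$. The entire argument is routine; the only mildly delicate point is the telescoping structure of the iterated-log derivatives, which is exactly what collapses the chain-rule sum into the clean operator $\mathcal R$.
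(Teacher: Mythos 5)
Your proof is correct, and since the paper merely cites this lemma from \cite{H5} without reproducing a proof, the direct chain-rule computation you present is exactly the argument one expects. The derivative formula $\iln_j'(t)=\prod_{i=0}^{j-1}\iln_i(t)^{-1}$ for $j\ge 1$ is right, the telescoping into $\mathcal M_{-1}$ and $\mathcal R$ via \eqref{Mminus} and \eqref{chiz} matches the definitions precisely, and the decay estimate in the particular case is sound: with $m\ge 1$ one has $\Re(\alpha_{-1})=\Re(\alpha_0)=0$, so $|\widehat\LL_k(t)^\alpha|=\prod_{j\ge 1}\iln_j(t)^{\Re\alpha_j}$ is polylogarithmic and hence $\bigo(t^\delta)$ for any $\delta>0$, while each prefactor $\iln_0(t)^{-1}\cdots\iln_j(t)^{-1}$ in $\mathcal R q$ supplies at least a factor $t^{-1}$. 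The only cosmetic imprecision is that \eqref{LLo} by itself gives control against $\iln_m(t)^{\mu+\delta}$ rather than against $t^\delta$; one should additionally invoke \eqref{LLk} (or the remark that $\iln_m(t)^\lambda=\bigo(t^\delta)$ for $m\ge 1$) to pass to the polynomial scale, but this does not affect the validity of the argument.
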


\begin{theorem}[{\cite[Theorem 5.5]{H5}}]\label{iterlog}
Given integers $m,k\in \Z_+$ with $k\ge m$, and a number $t_0>E_k(0)$.
Let  $\mu>0$, $p\in \mathscr P_{m}(k,-\mu,\C^n)$, and let function $g\in C([t_0,\infty),\C^n)$ satisfy 
\beqs 
|g(t)|=\bigo(\iln_m(t)^{-\alpha})\text{ for some }\alpha>\mu.
\eeqs 

Suppose $y\in C([t_0,\infty),\C^n)$ is a solution of 
 \beqs
 y'=-Ay+p(\widehat \LL_k(t))+g(t)\text{ on } (t_0,\infty).
 \eeqs

 Then  there exists  $\delta >0$  such that 
 \beq\label{yZp}
 |y(t)-\mathcal Z_Ap(\widehat \LL_k(t))|=\bigo(\iln_m(t)^{-\mu-\delta}).
 \eeq
 \end{theorem}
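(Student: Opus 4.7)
The plan is to isolate the candidate main term $\mathcal{Z}_A p(\widehat{\LL}_k(t))$, derive a linear ODE for the remainder $w(t) = y(t) - \mathcal{Z}_A p(\widehat{\LL}_k(t))$ whose forcing decays strictly faster than $\iln_m(t)^{-\mu}$, and then extract the decay rate of $w$ from variation of parameters together with Lemma \ref{plnlem}.

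To set up the equation for $w$, I would first apply Lemma \ref{MRZgen}\ref{R0}, which ensures $q := \mathcal{Z}_A p$ still lies in $\mathscr{P}_m(k,-\mu,\C^n)$, and then invoke Lemma \ref{logode} to obtain
\[
\ddt\, q(\widehat{\LL}_k(t)) = \mathcal{M}_{-1} q(\widehat{\LL}_k(t)) + \mathcal{R} q(\widehat{\LL}_k(t)).
\]
Subtracting this from the given ODE for $y$ and using the algebraic identity \eqref{ZAM}, namely $(A + \mathcal{M}_{-1})\mathcal{Z}_A p = p$ (applicable here because of the inclusion \eqref{Pm10}), makes the $p(\widehat{\LL}_k(t))$ contributions cancel exactly. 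The outcome is
\[
w'(t) + A w(t) = g(t) - \mathcal{R}\mathcal{Z}_A p(\widehat{\LL}_k(t)) =: h(t).
\]

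Next I would verify that $|h(t)| = \bigo(\iln_m(t)^{-(\mu+\delta)})$ for some $\delta > 0$. The hypothesis on $g$ already yields this with $\delta = \alpha - \mu > 0$. For the correction term I would split on $m$. When $m = 0$, Lemma \ref{MRZgen}\ref{R2} improves the exponent by one and places $\mathcal{R}\mathcal{Z}_A p$ in $\mathscr{P}_0(k, -\mu - 1, \C^n)$, so its evaluation decays like $t^{-\mu - 1 + \varep}$ for every $\varep > 0$ by the general bound \eqref{LLp}. When $m \ge 1$, the second assertion of Lemma \ref{logode} applied to $q$ gives $\mathcal{R} q(\widehat{\LL}_k(t)) = \bigo(t^{-\gamma})$ for every $\gamma \in (0,1)$, which by \eqref{LLk} is dominated by every negative power of $\iln_m(t)$. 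In both regimes the correction is negligible compared to $\iln_m(t)^{-(\mu+\delta)}$ once $\delta$ is chosen sufficiently small.

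Finally I would apply variation of parameters,
\[
w(t) = e^{-(t-t_0)A} w(t_0) + \int_{t_0}^t e^{-(t-\tau)A} h(\tau)\, d\tau,
\]
and estimate the matrix exponential via \eqref{eA2} with some $\lambda \in (0,\lambda_1)$. The homogeneous piece decays exponentially and so beats any $\iln_m(t)^{-(\mu+\delta)}$. For the convolution, after the translation $\tau \mapsto \tau - t_0$ the integral matches the form of Lemma \ref{plnlem} with $T_* = t_0 > E_m(0)$ and exponent $\mu + \delta$, yielding the desired bound $\bigo(\iln_m(t)^{-(\mu+\delta)})$. I expect the main subtlety to be precisely the dichotomy $m = 0$ versus $m \ge 1$ in bounding $\mathcal{R}\mathcal{Z}_A p(\widehat{\LL}_k(t))$, since the two cases invoke structurally different mechanisms (a gain of one unit of $\mu$ from $\mathcal{R}$ when $m=0$, versus the much stronger $t^{-\gamma}$ decay coming from the $z_0^{-1}$ factors built into $\mathcal{R}$ when $m\ge 1$); once $h$ is controlled, the remaining argument is routine.
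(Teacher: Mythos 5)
Your proof is correct, and the mechanism works as you describe: subtracting $q := \mathcal{Z}_A p$ first, the identity \eqref{ZAM} (applicable via \eqref{Pm10}) cancels the $p(\widehat{\LL}_k(t))$ terms exactly, and Lemma \ref{logode} converts $\frac{\d}{\d t}q(\widehat{\LL}_k(t))$ into $\mathcal{M}_{-1}q + \mathcal{R}q$, leaving $w' + Aw = g - \mathcal{R}q(\widehat{\LL}_k(t))$. Your case split for bounding $\mathcal{R}q$ is the right one: when $m=0$ you gain a full unit in the exponent via Lemma \ref{MRZgen}\ref{R2} together with \eqref{LLp}, and when $m\ge 1$ the $\bigo(t^{-\gamma})$ decay from Lemma \ref{logode} crushes any $\iln_m(t)^{-\mu-\delta}$ by \eqref{LLk}. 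The final variation-of-parameters step, the shift $T_*=t_0>E_m(0)$ (which holds since $t_0>E_k(0)\ge E_m(0)$), and Lemma \ref{plnlem} close the argument.

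This is, however, a genuinely different decomposition from what the paper does for the direct analogue (Theorem \ref{newlin}, proved via Lemma \ref{newplem}, which the paper presents as the updated version of [H5]'s Lemma 5.4 underlying the cited Theorem 5.5). There, one applies variation of constants to $y$ itself, and then \emph{inside} the convolution $\int_0^t e^{-(t-\tau)A}p(\widehat{\LL}_k(T_*+\tau))\,\d\tau$ one factors out the oscillatory piece $e^{\alpha_{-1}(T_*+\tau)}$ and integrates by parts; the boundary term produces $(\mathcal{Z}_A p)(\widehat{\LL}_k(T_*+t))$ and the error integral $J(t)$ is driven by $\d F/\d\tau$, which is again controlled by the $\mathcal{R}$-term via Lemma \ref{logode} and then by Lemma \ref{plnlem}. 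Your subtract-first route avoids the integration by parts and the auxiliary $F(\tau)$ entirely, at the price of positing the candidate $\mathcal{Z}_A p$ up front rather than extracting it as a boundary term. Both arguments hinge on exactly the same ingredients---\eqref{ZAM}, the action of $\mathcal{R}$ on $\mathscr{P}_m$, and \eqref{iine2}---so the difference is organizational; your version is shorter, while the paper's integration-by-parts formulation is the one that carries over cleanly to the $\zeta$-augmented setting in Lemma \ref{newplem}.
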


A new version of Lemma \ref{logode} for the new functions in Definitions \ref{Fsubo} and \ref{Pplus} is presented next.

 \begin{lemma}\label{newdq}
 Assume 
$k\ge m\ge 0$,  $\mu\in\R$, $q\in\widehat{\mathscr P}_{m}(k,\mu,\C^n)$ and $\zeta_*\in \mathscr P_m^+(k)$.
 Denote 
 \beq \label{Xikbar}
     \Xi_k(t)=\left (\widehat \LL_k(t),\zeta_*(\widehat \LL_{k}(t))\right ).
 \eeq
Then
 \beq\label{dqze0}
\ddt q(\Xi_k(t))
=\left.\left\{ \mathcal M_{-1} q(z,\zeta)+\mathcal Rq(z,\zeta) +\mathcal R\zeta_*(z)\frac{\partial q(z,\zeta)}{\partial\zeta} \right\} \right|_{(z,\zeta)=\Xi_k(t)}.
 \eeq

If $m=0$, then 
 \beq\label{dqze1}
\ddt q(\Xi_k(t))
=\mathcal M_{-1} q(\Xi_k(t))+\bigo(t^{-\mu-\gamma})\text{ for all } \gamma\in(0,1).
 \eeq

If  $m\ge 1$, then
 \beq\label{dqze2}
 \ddt q(\Xi_k(t))=\mathcal M_{-1} q(\Xi_k(t)) +\bigo(t^{-\gamma})
 \text{ for all }\gamma\in(0,1).
 \eeq
 \end{lemma}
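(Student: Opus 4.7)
The plan is to establish the identity \eqref{dqze0} by applying the chain rule monomial-by-monomial to the representation $q(z,\zeta)=\sum_{(\alpha,\beta)\in S} z^\alpha\zeta^\beta\xi_{\alpha,\beta}$, and then to read off the estimates \eqref{dqze1} and \eqref{dqze2} from the invariance properties of $\mathcal R$ and $\mathcal M_{-1}$ on the relevant classes.

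First I would differentiate a single factor $z^\alpha$ in $t$ under $z=\widehat\LL_k(t)$. Since $dz_{-1}/dt=z_{-1}$ and $dz_j/dt=z_0^{-1}z_1^{-1}\cdots z_{j-1}^{-1}$ for $0\le j\le k$ (with the empty product equal to $1$ when $j=0$), logarithmic differentiation yields
\[
\frac{d}{dt}z^\alpha=\alpha_{-1}z^\alpha+\sum_{j=0}^k \alpha_j z_0^{-1}\cdots z_j^{-1}z^\alpha,
\]
which is precisely $(\mathcal M_{-1}+\mathcal R)(z^\alpha)$ by \eqref{Mminus} and \eqref{chiz}. For the factor $\zeta^\beta$ with $\zeta=\zeta_*(\widehat\LL_k(t))$, I would iterate the chain rule once more to get $d\zeta/dt=(\mathcal M_{-1}\zeta_*+\mathcal R\zeta_*)(\widehat\LL_k(t))$; crucially, the hypothesis $\zeta_*\in\mathscr P^+_m(k)$ with $m\ge 0$ forces, by \eqref{strictPp}, every power vector in $\zeta_*$ to have $\alpha_{-1}=0$, so $\mathcal M_{-1}\zeta_*\equiv 0$ and only $\mathcal R\zeta_*$ survives. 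Hence $d(\zeta^\beta)/dt=(\partial\zeta^\beta/\partial\zeta)\cdot(\mathcal R\zeta_*)(\widehat\LL_k(t))$, and summing the Leibniz expansion of $d(z^\alpha\zeta^\beta\xi_{\alpha,\beta})/dt$ over $(\alpha,\beta)\in S$ produces \eqref{dqze0}.

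For the remainder estimates it suffices to control
\[
E(t):=(\mathcal Rq)(\Xi_k(t))+(\mathcal R\zeta_*)(\widehat\LL_k(t))\cdot\frac{\partial q}{\partial\zeta}(\Xi_k(t)).
\]
When $m=0$, part \ref{R2} of Lemma \ref{MRZgen} together with Lemma \ref{invar2} places $\mathcal Rq\in\widehat{\mathscr P}_0(k,\mu-1,\C^n)$; the pointwise bound \eqref{termest1} then controls $(\mathcal Rq)(\Xi_k(t))$ by $\bigo(t^{\mu-1+\delta})$ for any $\delta>0$, where the $\zeta$-substitution contributes only subleading polylog factors via \eqref{plusest} because the dominant exponent $\alpha_{\max}$ of $\zeta_*$ lies in $\mathcal E_\R(m,k,0)$. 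The second piece of $E(t)$ is handled identically using $\mathcal R\zeta_*\in\mathscr P_0(k,-1,\C,\R)$ and $\partial q/\partial\zeta\in\widehat{\mathscr P}_m(k,\mu,\C^n)$; choosing $\delta=1-\gamma$ then delivers the rate in \eqref{dqze1}. When $m\ge 1$, the explicit $z_0^{-1}$ factor generated by $\mathcal R$ dominates every polylog growth from $q$ or $\zeta_*$ through \eqref{LLo}, so both pieces of $E(t)$ are $\bigo(t^{-1+\delta})\subset\bigo(t^{-\gamma})$ for any $\gamma\in(0,1)$, recovering \eqref{dqze2}.

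The main delicate point is the bookkeeping for the subordinate variable: although $\zeta_*$ may grow polylogarithmically, the strict condition $\alpha_{-1}=0$ in \eqref{strictPp} is precisely what ensures $d\zeta/dt$ consists purely of $\mathcal R$-type terms and thereby falls into the remainder, so the third summand on the right-hand side of \eqref{dqze0} never competes with the leading $\mathcal M_{-1}q$ contribution. Once this observation is in place, the argument is a direct extension of Lemma \ref{logode} to functions carrying the extra $\zeta$-factor.
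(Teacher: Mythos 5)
Your proposal is correct and follows essentially the same route as the paper: establish \eqref{dqze0} via the chain rule after observing that $\mathcal M_{-1}\zeta_*=0$ (from the restriction $\alpha_{-1}=0$ in \eqref{strictPp}), then bound the $\mathcal R$-terms using Lemma~\ref{MRZgen}\ref{R2} and \eqref{LLo}. The only difference is cosmetic — you re-derive the logarithmic-differentiation identity $\ddt z^\alpha=(\mathcal M_{-1}+\mathcal R)(z^\alpha)$ monomial-by-monomial, whereas the paper invokes formula \eqref{dq0} from Lemma~\ref{logode} twice; the remainder estimates are handled in the same way, and your observation that your bound $\bigo(t^{\mu-\gamma})$ (rather than $\bigo(t^{-\mu-\gamma})$) is what naturally comes out of the hypothesis $q\in\widehat{\mathscr P}_m(k,\mu,\C^n)$ reflects a sign inconsistency already present in the paper's own statement and proof, not a defect in your argument.
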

\begin{proof}
Note from \eqref{strictPp} and \eqref{Mminus} that $\mathcal M_{-1} \zeta_*=0$.
By the Chain Rule and applying formula \eqref{dq0} twice, we obtain
\begin{align*}
&\ddt q(\Xi_k(t))
=\mathcal M_{-1} q(\Xi_k(t))+\mathcal Rq(\Xi_k(t))
+\frac{\partial q}{\partial\zeta}(\Xi_k(t)) \ddt \zeta_*(\widehat \LL_k(t))\\
&=\left.\left\{ \mathcal M_{-1} q(z,\zeta)+\mathcal Rq(z,\zeta)
+\frac{\partial q(z,\zeta)}{\partial\zeta} \Big  (\mathcal M_{-1} \zeta_*(z)+\mathcal R\zeta_*(z)\Big)\right\}\right|_{(z,\zeta)=(\Xi_k(t))},
\end{align*}
which yields \eqref{dqze0}.

Consider $m=0$. Applying estimate \eqref{plusrate} to the function $p=\zeta_*$ yields  
$ \zeta_*(\widehat{\LL}_k(t))^s=\bigo(t^\delta)$ for any numbers  $s\in\R$ and $\delta>0$.
Then one has
\begin{align*}
\left|\frac{\partial q}{\partial\zeta}(\Xi_k(t))\right|
&=\bigo(t^{-\mu+\varep})
\ \forall \varep>0,\\
\intertext{and, by Lemma \ref{MRZgen}\ref{R2},}
\left|\mathcal Rq(\Xi_k(t))\right|&=\bigo(t^{-\mu-\gamma}),\quad 
|\mathcal R\zeta_*(\widehat \LL_k(t))|
=\bigo(t^{-\gamma}) \ \forall \gamma\in(0,1).
\end{align*}
Combining these estimates with \eqref{dqze0}, we obtain \eqref{dqze1}.

Consider $m\ge 1$. We similarly have
\begin{align*}
\left|\frac{\partial q}{\partial\zeta}(\Xi_k(t))\right |&=\bigo(t^{\varep}) \ \forall \varep>0,\\
|\mathcal Rq(\Xi_k(t))|,\ 
|\mathcal R\zeta_*(\widehat \LL_k(t))|&=\bigo(t^{-\gamma}) \ \forall \gamma\in(0,1).
\end{align*}
Combining these estimates with \eqref{dqze0}, we obtain \eqref{dqze2}.
\end{proof}

The following lemma  is an updated version of  \cite[Lemma 5.4]{H5} and will be used to prove the fundamental approximation result  -- Theorem \ref{newlin}.

\begin{lemma}\label{newplem} 
Let $m\in Z_+$, $\mu>0$, $k\ge m$, $p\in\widehat{\mathscr P}_{m}(k,-\mu,\C^n)$, $\zeta_*\in{\mathscr P}_m^+(k)$ and a sufficiently large number $T_*>E_k(0)$.
Denote $\Xi_k(t)$ as in \eqref{Xikbar}.

If $m=0$, then one has,  for any $\gamma\in(0,1)$,
 \beq\label{log1}
\int_0^t e^{-(t-\tau)A} p(\Xi_k(T_*+\tau)) \d\tau
=(\mathcal Z_A p)(\Xi_k(T_*+t))
 +\bigo((T_*+t)^{-\mu-\gamma}) .
 \eeq

If $m\ge1$, then one has,  for any $\gamma\in(0,1)$,
 \beq\label{log2}
 \int_0^t e^{-(t-\tau)A} p(\Xi_k(T_*+\tau)) \d\tau =(\mathcal Z_A p)(\Xi_k(T_*+t)) 
 +\bigo((T_*+t)^{-\gamma}).
 \eeq
 \end{lemma}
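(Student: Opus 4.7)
The plan is to adapt the proof of \cite[Lemma 5.4]{H5}, now incorporating the subordinate variable $\zeta_*$. Set $q = \mathcal Z_A p$, which by Lemma \ref{MRZgen}\ref{R0} lies in $\widehat{\mathscr P}_m(k,-\mu,\C^n)$, and define $u(t) = q(\Xi_k(T_*+t))$. The central identity is \eqref{ZAM}, namely $Aq + \mathcal M_{-1} q = p$. Applying Lemma \ref{newdq} to $q$, the derivative $u'(t)$ equals $\mathcal M_{-1} q(\Xi_k(T_*+t))$ plus an error $E(t)$ coming from the $\mathcal R q$ and $\mathcal R\zeta_*\,\partial q/\partial\zeta$ contributions in \eqref{dqze0}. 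Substituting $\mathcal M_{-1} q = p - Aq$ yields the forced linear ODE
\begin{equation*}
u'(t) + A u(t) = p(\Xi_k(T_*+t)) + E(t).
\end{equation*}

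Duhamel's formula then gives
\begin{equation*}
\int_0^t e^{-(t-\tau)A} p(\Xi_k(T_*+\tau))\,d\tau = u(t) - e^{-tA} u(0) - \int_0^t e^{-(t-\tau)A} E(\tau)\,d\tau.
\end{equation*}
Since $u(t) = (\mathcal Z_A p)(\Xi_k(T_*+t))$ is precisely the desired leading term, it remains to show that the last two terms on the right fit within the claimed error. The boundary term $e^{-tA} u(0)$ decays exponentially by \eqref{eA2}, which beats any polynomial or iterated-logarithmic rate. For the integral of $E$, I invoke the sharp bounds from Lemma \ref{newdq}: when $m=0$, $|E(\tau)| = \bigo((T_*+\tau)^{-\mu-\gamma})$ for any $\gamma\in(0,1)$; when $m\ge 1$, $|E(\tau)| = \bigo((T_*+\tau)^{-\gamma})$. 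In both cases, combining \eqref{eA2} with Lemma \ref{plnlem} applied with $m=0$ (so $\iln_0(t)=t$) produces $\bigo((T_*+t)^{-\mu-\gamma})$ and $\bigo((T_*+t)^{-\gamma})$ respectively, establishing \eqref{log1} and \eqref{log2}.

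The one genuinely new point, beyond the template from \cite[Lemma 5.4]{H5}, is the treatment of the term $\mathcal R\zeta_*(z)\,\partial q(z,\zeta)/\partial\zeta$ in \eqref{dqze0}, which has no analogue in the old setting. Its control hinges on $\zeta_* \in \mathscr P_m^+(k)$: by \eqref{plusrate}, $\zeta_*^\beta$ contributes only factors of size $\iln_m(t)^\delta$ for arbitrary $\delta>0$, while $\mathcal R\zeta_*$ picks up a genuine $\bigo(t^{-\gamma})$ decay via Lemma \ref{MRZgen}\ref{R2}. Thus the new $\zeta$-derivative contribution has no worse asymptotics than $\mathcal R q$ itself, and is already absorbed into the error terms packaged into Lemma \ref{newdq}. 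I expect this verification, rather than the Duhamel argument proper, to be the main technical step.
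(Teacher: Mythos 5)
Your proof is correct, and while it delivers the same identity, it is organized differently from the paper's argument. The paper reduces by linearity to a single monomial $p(z,\zeta)=z^{\alpha}\zeta^{\beta}\xi$, factors out the exponential $e^{\alpha_{-1}(T_*+t)}$, and integrates by parts against the kernel $e^{-(t-\tau)(A+\alpha_{-1}I_n)}$, whereupon the operator $\mathcal Z_A$ appears through the factor $(A+\alpha_{-1}I_n)^{-1}$ produced by that integration by parts. You instead start from $u(t)=(\mathcal Z_A p)(\Xi_k(T_*+t))$, use \eqref{dqze0} together with the identity \eqref{ZAM} to show that $u$ solves the forced equation $u'+Au=p(\Xi_k(T_*+\cdot))+E$, and read off the result from the variation-of-constants formula for $u$. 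This handles a general $p$ and all $\alpha_{-1}$-modes at once, with no monomial reduction or separate $\alpha_{-1}$-bookkeeping. The two residual integrals --- the paper's $J(t)$ and your $\int_0^t e^{-(t-\tau)A}E(\tau)\,d\tau$ --- are the same object, and both proofs estimate it using Lemma \ref{newdq}, inequality \eqref{eA2}, and Lemma \ref{plnlem} applied with $\iln_0(t)=t$. Your identification of the $\mathcal R\zeta_*\,\partial q/\partial\zeta$ term as the one genuinely new contribution is also on point; its control is indeed already packaged inside Lemma \ref{newdq}.

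One detail you should spell out: the bounds on $E$ delivered by \eqref{dqze1} and \eqref{dqze2} are asymptotic (as $t\to\infty$), whereas Lemma \ref{plnlem} needs a pointwise bound on $|E(\tau)|$ valid for all $\tau\ge0$. The paper bridges this by noting that both $\tau\mapsto \d F(\tau)/\d\tau$ and $\tau\mapsto (T_*+\tau)^{-\mu-\gamma}$ are continuous on $[0,\infty)$, so the $\bigo$-estimate can be taken to hold on all of $[0,\infty)$ after enlarging the constant. You should insert the same remark before feeding $E$ into Lemma \ref{plnlem}.
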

\begin{proof}
It suffices to prove \eqref{log1} and \eqref{log2} for 
$p(z,\zeta)= z^{ \alpha}\zeta^\beta\xi$, with 
$$z=(z_{-1},z_0,\ldots,z_k)\in(0,\infty)^{k+2},\
\alpha=(\alpha_{-1},\alpha_0,\ldots,\alpha_k)\in\mathcal E_\C(m,k,-\mu),\ 
\beta\in \R,
$$
and some $\xi\in \C^n$.
Since $m\ge 0$, we have $\Re(\alpha_{-1})=0$.
Let 
\beq \label{alprim}
    \alpha'=(0,\alpha_0,\ldots,\alpha_k)\text{ and } q(z,\zeta)=z^{\alpha'}\zeta^\beta\xi.
\eeq

By defining  $F(t)=q(\Xi_k(T_*+t))$, we can write 
 \beqs
 p(\Xi_k(T_*+t)) =e^{\alpha_{-1} (T_*+t)}F(t).
 \eeqs 
 
Denote $I(t)=\int_0^t e^{-(t-\tau)A} p(\Xi_k(T_*+\tau)) \d\tau$. Then
 \begin{align*}
 I(t)= \int_0^t e^{\alpha_{-1}(T_*+ \tau)}e^{-(t-\tau)A}F(\tau)   \d\tau 
= e^{\alpha_{-1} (T_*+t)}\int_0^t e^{-(t-\tau)(A+\alpha_{-1} I_n)}F(\tau)    \d\tau.
 \end{align*}
Using the fact 
$$\frac{\d}{\d\tau} \left[(A+\alpha_{-1} I_n)^{-1} e^{-(t-\tau)(A+\alpha_{-1} I_n)}\right]=e^{-(t-\tau)(A+\alpha_{-1} I_n)},$$
we apply integration by parts to obtain 
  \begin{align*}
  I(t) 
&=  e^{\alpha_{-1} (T_*+t)}\left\{(A+\alpha_{-1} I_n)^{-1} e^{-(t-\tau)(A+\alpha_{-1} I_n)}F(\tau)   \Big|_{\tau=0}^{\tau=t}\right.\\
&\qquad\qquad\qquad \left.- \int_0^t (A+\alpha_{-1} I_n)^{-1}e^{-(t-\tau)(A+\alpha_{-1} I_n)} \frac{\d F(\tau)}{\d \tau}  \d\tau \right\}\\
&=e^{\alpha_{-1} (T_*+t)}(A+\alpha_{-1} I_n)^{-1}F(t) -e^{\alpha_{-1} T_*}(A+\alpha_{-1} I_n)^{-1} e^{-tA}F(0)  -J(t),
 \end{align*} 
where
\beqs
J(t)=\int_0^t e^{\alpha_{-1}(T_*+\tau)}(A+\alpha_{-1} I_n)^{-1}e^{-(t-\tau)A} \frac{\d F(\tau)}{\d \tau}  \d\tau.
\eeqs
Hence,
\beq\label{Iint}
  I(t) = (A+\alpha_{-1} I_n)^{-1}p(\Xi_k(T_*+t)) -(A+\alpha_{-1} I_n)^{-1}e^{-tA} p(\Xi_k(T_*))   -J(t).
\eeq

We consider each term on the right-hand side of \eqref{Iint}.
For the first term, it is obvious that
\beq\label{Iz}
(A+\alpha_{-1} I_n)^{-1}p(\Xi_k(T_*+t)) =(\mathcal Z_Ap)(\Xi_k(T_*+t)) .
\eeq
For the second term, one has, thanks to \eqref{eA2},  
\beq\label{Iini}
(A+\alpha_{-1} I_n)^{-1}e^{-tA} p(\Xi_k(T_*))=\bigo(e^{-\lambda_1 t/2}).
\eeq
For the third term,  we estimate $J(t)$ by noticing that $|e^{-\alpha_{-1}(T_*+t)}|=1$ and applying inequality \eqref{eA2} again to have 
\beq\label{JJ}
|J(t)|\le C_1\int_0^t e^{-\lambda_1(t-\tau)/2} \left|\frac{\d F(\tau)}{\d \tau} \right| \d\tau,\text{ where }
C_1=C_0|(A+\alpha_{-1} I_n)^{-1}|.
\eeq

Let $\gamma$ be an arbitrary number in $(0,1)$. Thanks to  \eqref{Mminus} and \eqref{alprim}, one has
\beq \label{Mqz}
    \mathcal M_{-1}q=0.
\eeq 

\medskip
Consider $m=0$. 
By \eqref{dqze1} and \eqref{Mqz},
\beqs
\left| \frac{\d F(t)}{\d t}\right|=\bigo ((T_*+t)^{-\mu-\gamma}).
 \eeqs
By the continuity of the functions $t\mapsto \d F(t)/\d t$ and $t\mapsto  (T_*+t)^{-\mu-\gamma}$ on $[0,\infty)$, we deduce
\beq\label{dF1}
\left| \frac{\d F(t)}{\d t}\right|\le C_2 (T_*+t)^{-\mu-\gamma} \text{ for all }t\ge 0,
 \eeq
for some positive constant $C_2$.
Using \eqref{dF1} in \eqref{JJ} and then applying inequality \eqref{iine2} to estimate the resulting  integral give
\beq\label{Jbo}
|J(t)|\le C_1C_2\int_0^t e^{-\lambda_1(t-\tau)/2}(T_*+\tau)^{-\mu-\gamma} \d\tau
 =\bigo((T_*+t)^{-\mu-\gamma}).
\eeq
Combining \eqref{Iint},  \eqref{Iz}, \eqref{Iini} and \eqref{Jbo} yields \eqref{log1}.

\medskip
Consider $m\ge 1$. By \eqref{dqze2} and \eqref{Mqz},
\beqs
\left|\frac{\d F(t)}{\d t}\right|=\bigo((T_*+t)^{-\gamma}).
\eeqs 
We obtain, similar to \eqref{Jbo}, that
\beq\label{Jbo2}
|J(t)|=\bigo((T_*+t)^{-\gamma}).
\eeq
Combining \eqref{Iint},  \eqref{Iz}, \eqref{Iini} and \eqref{Jbo2} yields \eqref{log2}.
 \end{proof}

The next theorem is a new version of Theorem \ref{iterlog} and is essential in proving the Theorem \ref{mainthm}.

 \begin{theorem}\label{newlin}
Given integers $m,k\in \Z_+$ with $k\ge m$, and a number  $\mu>0$.
Assume $\zeta_*\in\mathscr P_{m}^+(k)$,   $p\in \widehat{\mathscr P}_{m}(k,-\mu,\C^n)$, and function $g\in C([t_0,\infty),\C^n)$, for some sufficiently large number $t_0>E_k(0)$, satisfies
\beq \label{gbio}
|g(t)|=\bigo(\iln_m(t)^{-\alpha})\text{ for some }\alpha>\mu.
\eeq 

Suppose $y\in C^1([t_0,\infty),\C^n)$ is a solution of 
 \beq\label{yeq4}
 y'=-Ay+p(\widehat \LL_k(t),\zeta_*(\widehat \LL_k(t)))+g(t)\text{ on } (t_0,\infty).
 \eeq

 Then  there exists  $\delta >0$  such that 
 \beq\label{ypL}
\left |y(t)-\mathcal Z_Ap(\widehat \LL_k(t),\zeta_*(\widehat \LL_k(t)))\right|=\bigo(\iln_m(t)^{-\mu-\delta}).
 \eeq
 \end{theorem}
  \begin{proof}
  Regarding equation \eqref{yeq4}, we implicitly assumed that $t_0$ is sufficiently large such that
  $$\zeta_*(\widehat \LL_k(t))>0 \text{ for all }t\in [t_0,\infty).$$
Denote $\Xi_k(t)$ as in \eqref{Xikbar}. By the variation of constant formula, 
\beq\label{ytt}
y(t_0+t)=e^{-tA}y(t_0)+\int_{0}^{t} e^{-(t-\tau)A} p(\Xi_k(t_0+\tau))\d\tau+\int_{0}^{t} e^{-(t-\tau)A}g(t_0+\tau)\d\tau.
\eeq

For the first term on the right-hand side of \eqref{ytt}, we have, by \eqref{eA2},  
\beqs 
e^{-tA}y(t_0)=\bigo(e^{-\lambda_1 t/2})=\bigo(\iln_m(t_0+t)^{-\mu-1}).
\eeqs 

For the second term on the right-hand side of \eqref{ytt},  we apply Lemma \ref{newplem} to $T_*=t_0$, using identity \eqref{log1} when $m=0$, and identity  \eqref{log2} when $m\ge 1$, both with $\gamma=1/2$.  
In the latter case $m\ge 1$, we also estimate $\bigo((t_0+t)^{-1/2})=\bigo(\iln_m(t_0+t)^{-\mu-1/2})$.
These result in 
\begin{align*}
   \int_{0}^{t} e^{-(t-\tau)A} p(\Xi_k(t_0+\tau))\d\tau
 =(\mathcal Z_Ap)(\Xi_k(t_0+t))+\bigo(\iln_m(t_0+t)^{-\mu-1/2}).
\end{align*}

Consider  the third term on the right-hand side of \eqref{ytt}.
By \eqref{gbio} and the continuity of $g(t)$ and $\iln_m(t)>0$ on $[t_0,\infty)$, there is $C_1>0$ such that 
\beq\label{gbound}
|g(t)|\le C_1\iln_m(t)^{-\alpha}\quad\text{ for all $t\ge t_0$. }
\eeq
Then combining  inequalities \eqref{eA2}, \eqref{gbound} with \eqref{iine2} yields
\beqs
\left| \int_{0}^{t} e^{-(t-\tau)A}g(t_0+\tau)\d\tau\right|
\le C_1C_2\int_{0}^{t} e^{-\lambda_1(t-\tau)/2}\iln_m(t_0+t)^{-\alpha}\d\tau \le C_3\iln_m(t_0+t)^{-\alpha}
\eeqs
for all $t\ge 0$ and some constants $C_2,C_3>0$.
Let $\delta=\min\{1/2,\alpha-\mu\}>0$. We obtain from \eqref{ytt} and the above calculations and estimates  that
\beqs
y(t_0+t)=(\mathcal Z_Ap)(\Xi_k(t_0+t))+\bigo(\iln_m(t_0+t)^{-\mu-\delta}),
\eeqs
which proves \eqref{ypL}.
\end{proof}

The estimate \eqref{ypL} shows that $(\mathcal Z_Ap)(\widehat \LL_k(t),\zeta_*(\widehat \LL_k(t)))$ is an asymptotic approximation of $y(t)$.
This is the building block for the recursive construction of the functions $q_k$ in Theorem \ref{mainthm}, see the proof of Theorem \ref{mainthm2} below.

\subsection{Complexification of linear and multi-linear mappings}\label{comfysec}
The proof of Theorem \ref{mainthm} will make use of the following complexification.

\begin{definition}\label{cmplxify}
Let $\{e_j:1\le j\le n\}$ be the canonical basis for $\R^n$ and $\C^n$.
Let $m\ge 1$ and $M$ be an $m$-linear mapping from $(\R^n)^m$ to $\R^p$.
The complexification of $M$ is  $M_\C:(\C^n)^m\to \C^p$  defined by
\beq\label{MCdef}
\begin{aligned}
&M_\C\left(\sum_{j_1=1}^n z_{1,j_1} e_{j_1}, \sum_{j_2=1}^n z_{1,j_2} e_{j_2},\ldots, \sum_{j_m=1}^n z_{m,j_m} e_{j_m}\right )\\
& =\sum_{j_1,j_2,\ldots,j_m=1}^n  z_{1,j_1}z_{2,j_1} \ldots  z_{m,j_m}  M( e_{j_1},e_{j_2}, \ldots,  e_{j_m} )
\end{aligned}
\eeq
for all $z_{k,j_k}\in\C$, $k=1,2,\ldots,m$ and $j_k=1,2,\ldots,n$.
\end{definition}

Then $M_\C$ is the unique extension of $M$ to an $m$-linear mapping (over $\C$) from $(\C^n)^m$ to $\C^p$.
Because $M( e_{j_1},e_{j_2}, \ldots,  e_{j_m} )$ in \eqref{MCdef} are $\R^p$-valued, one has the following conjugation property
\beq\label{MCbar}
M_\C(\bar \xi_1,\bar \xi_2,\ldots,\bar \xi_m)=\overline{M_\C(\xi_1,\xi_2,\ldots,\xi_m)} \quad\forall \xi_1,\xi_2,\ldots,\xi_m\in\C^n .
\eeq

\begin{lemma}\label{prodclass}
    Let $s\ge 1$ and $M:\C^{m_1}\times \ldots\times \C^{m_s}\to \C^{m_0}$ be an $s$-linear mapping such that
    \beq \label{Mbicond}
        \overline{M(\xi_1,\ldots,\xi_s)}=M(\bar \xi_1,\ldots,\bar\xi_s)\text{ for all }\xi_j\in\C^{m_j}, \ 1\le j\le s.
    \eeq
Let $\tilde {\mathscr P}={\mathscr P}$ or $\widehat{\mathscr P}$.   Then one has, for any $k\ge m\ge -1$ and $p_j\in \tilde{\mathscr P}_{m}(k,\mu_j,\C^{m_j},\R^{m_j})$, for $j=1,\ldots,s$,   that
    \beq \label{Mpq}
        M(p_1,\ldots,p_s)\in \tilde{\mathscr P}_{m}(k,\mu_1+\ldots+\mu_s,\C^{m_0},\R^{m_0}).
    \eeq    
\end{lemma}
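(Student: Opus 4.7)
The plan is a direct bookkeeping argument: expand $M(p_1,\ldots,p_s)$ by $s$-linearity, collect like terms, and verify that each defining property of the target class $\tilde{\mathscr P}_{m}(k,\mu_1+\cdots+\mu_s,\C^{m_0},\R^{m_0})$ is preserved. Writing each factor as
\[
p_j(z,\zeta)=\sum_{(\alpha^{(j)},\beta^{(j)})\in S_j} z^{\alpha^{(j)}}\zeta^{\beta^{(j)}}\xi^{(j)}_{\alpha^{(j)},\beta^{(j)}}
\]
(with $\zeta$ and $\beta^{(j)}$ suppressed when $\tilde{\mathscr P}=\mathscr P$), the $s$-linearity of $M$ yields
\[
M(p_1,\ldots,p_s)(z,\zeta)=\sum z^{\alpha^{(1)}+\cdots+\alpha^{(s)}}\zeta^{\beta^{(1)}+\cdots+\beta^{(s)}}M\bigl(\xi^{(1)}_{\alpha^{(1)},\beta^{(1)}},\ldots,\xi^{(s)}_{\alpha^{(s)},\beta^{(s)}}\bigr),
\]
with the sum over $(\alpha^{(j)},\beta^{(j)})\in S_j$ for $j=1,\ldots,s$. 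Collecting like terms with the same value of $(\alpha,\beta)$, where $\alpha=\sum_j\alpha^{(j)}$ and $\beta=\sum_j\beta^{(j)}$, puts the expression in the form \eqref{pzedef} with coefficients $\eta_{\alpha,\beta}\in\C^{m_0}$ indexed by the finite support $S=\{(\sum_j\alpha^{(j)},\sum_j\beta^{(j)}):(\alpha^{(j)},\beta^{(j)})\in S_j\}$.

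It then remains to verify three things. First, since each $\alpha^{(j)}\in\mathcal E_\C(m,k,\mu_j)$ has vanishing real part at indices $-1,\ldots,m-1$ and real part $\mu_j$ at index $m$, additivity of real parts gives $\sum_j\alpha^{(j)}\in\mathcal E_\C(m,k,\mu_1+\cdots+\mu_s)$. Second, conjugation-closedness of each $S_j$ implies that the coordinatewise map $(\alpha^{(1)},\ldots,\alpha^{(s)})\mapsto(\overline{\alpha^{(1)}},\ldots,\overline{\alpha^{(s)}})$ is a bijection of $S_1\times\cdots\times S_s$ onto itself (with the real $\beta^{(j)}$-components fixed), so $(\alpha,\beta)\in S$ implies $(\bar\alpha,\beta)\in S$. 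Third, for the coefficient conjugation condition, using \eqref{Mbicond} and the conjugation condition $\overline{\xi^{(j)}_{\alpha^{(j)},\beta^{(j)}}}=\xi^{(j)}_{\overline{\alpha^{(j)}},\beta^{(j)}}$ on each $p_j$,
\[
\overline{\eta_{\alpha,\beta}}=\sum\overline{M\bigl(\xi^{(1)}_{\alpha^{(1)},\beta^{(1)}},\ldots,\xi^{(s)}_{\alpha^{(s)},\beta^{(s)}}\bigr)}=\sum M\bigl(\xi^{(1)}_{\overline{\alpha^{(1)}},\beta^{(1)}},\ldots,\xi^{(s)}_{\overline{\alpha^{(s)}},\beta^{(s)}}\bigr),
\]
which after relabeling via the above bijection is precisely $\eta_{\bar\alpha,\beta}$.

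The argument is uniform across the two choices of $\tilde{\mathscr P}$: in the case $\tilde{\mathscr P}=\mathscr P$ one simply erases all $\zeta$ and $\beta$ from the above. I do not foresee a serious obstacle here; the entire content is exponent-and-coefficient bookkeeping combined with a single application of the hypothesis \eqref{Mbicond}. The only step meriting a moment's care is ensuring that collecting like terms does not spoil conjugation-closedness, which is handled by the bijection argument above.
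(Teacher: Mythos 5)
Your proof is correct, and it arrives at the conclusion by a somewhat different organization than the paper. The paper first invokes \eqref{phalf} to reduce to the case that each $p_j$ is a single conjugate pair $q_j+\bar q_j$ with $q_j(z,\zeta)=z^{\alpha_{(j)}}\zeta^{\beta_j}\xi_j$; it then expands $M(q_1+\bar q_1,\ldots,q_s+\bar q_s)$ into $2^s$ summands indexed by sign vectors $(j_1,\ldots,j_s)\in\{1,-1\}^s$, pairs each summand with its opposite-sign partner via \eqref{Mbicond}, and observes that the result is again a finite sum of conjugate pairs in the form \eqref{phalf}. You instead expand over the full finite index sets $S_1\times\cdots\times S_s$ at once, collect like monomials, and verify the defining conditions of $\widehat{\mathscr P}_m(k,\mu_1+\cdots+\mu_s,\C^{m_0},\R^{m_0})$ directly: the exponent sum lies in $\mathcal E_\C(m,k,\mu_1+\cdots+\mu_s)$ by additivity of real parts, the collected index set is conjugation-closed because each $S_j$ is, and the coefficient conjugation relation follows from \eqref{Mbicond} together with the coefficient conjugation conditions of the $p_j$, after relabeling by the coordinatewise conjugation bijection. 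Your route avoids the preliminary reduction and the explicit $\kappa_{\pm1}$/sign-vector bookkeeping, at the cost of an extra moment of care checking that collecting like terms (which may merge several contributing tuples into one coefficient) is compatible with conjugation --- a point you handle correctly with the bijection argument. The paper's route makes the output visibly a sum of conjugate pairs matching \eqref{phalf}; yours verifies class membership definitionally. Either way the content is the same: $s$-linearity plus \eqref{Mbicond}.
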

\begin{proof}
Consider the case $\tilde{\mathscr P}=\widehat{\mathscr P}$.
By the virtue of \eqref{phalf}, it suffices to prove \eqref{Mpq} for 
$$p_j=q_j+\bar q_j \text{ for $1\le j\le s$, where } q_j(z,\zeta)=z^{\alpha_{(j)}} \zeta^{\beta_j}\xi_j.$$
Denote $\kappa_1(v)=v$ and $\kappa_{-1}(v)=\bar v$, for $v\in\C^r$. For $\delta\in\{1,-1\}$, one has 
    $$\overline{\kappa_\delta(v)}=\kappa_{-\delta}(v)\text{ and  }\overline{z^{\kappa_\delta(\alpha)}}=z^{\overline{\kappa_\delta(\alpha)}}=z^{\kappa_{-\delta}(\alpha)}.$$
    Thanks to property \eqref{Mbicond}, we have
    \beq \label{Mex}
    \begin{aligned}
        M(p_1,\ldots,p_s)
        &=M(q_1+\bar q_1,\ldots,q_s+\bar q_s)
        =\sum_{j_1,\ldots,j_s\in\{1,-1\}}  M(\kappa_{j_1}(q_1),\ldots,\kappa_{j_s}(q_s))\\
        &=\sum_{\substack{j_1,\ldots,j_s\in\{1,-1\},\\ (j_1,\ldots,j_s)>\mathbf 0_s}}  \big[M(\kappa_{j_1}(q_1),\ldots,\kappa_{j_s}(q_s))
        +M(\kappa_{-j_1}(q_1),\ldots,\kappa_{-j_s}(q_s))\big]\\
        &=\sum_{\substack{j_1,\ldots,j_s\in\{1,-1\},\\ (j_1,\ldots,j_s)>\mathbf 0_s}} \big[ M(\kappa_{j_1}(q_1),\ldots,\kappa_{j_s}(q_s))+\overline{M(\kappa_{j_1}(q_1),\ldots,\kappa_{j_s}(q_s))}\big].
    \end{aligned}
    \eeq
Above, $\mathbf 0_s$ is the zero vector in $\R^s$ and we used the lexicography order for the vector $(j_1,\ldots,j_s)$.
    We compute 
    \beq \label{Mpqo}
       M(\kappa_{j_1}(q_1),\ldots,\kappa_{j_s}(q_s))
       =z^{\kappa_{j_1}(\alpha_{(1)})+\ldots+\kappa_{j_s}(\alpha_{(s)})} 
       \zeta^{\beta_1+\ldots+\beta_s}M(\kappa_{j_1}(\xi_1),\ldots,\kappa_{j_s}(\xi_s)).
    \eeq
    Observe that $\kappa_{j_1}(\alpha_{(1)})+\ldots+\kappa_{j_s}(\alpha_{(s)}) \in \mathcal E_\C(m,k,\mu_1+\ldots+\mu_s)$.
        Combining this fact with \eqref{Mex} and \eqref{Mpqo}, we obtain \eqref{Mpq}.

The proof for the case $\tilde{\mathscr P}={\mathscr P}$ is the same by dropping the variable $\zeta$.      
\end{proof}

\begin{corollary}\label{cplxcor}
Let $\tilde {\mathscr P}={\mathscr P}$ or $\widehat{\mathscr P}$. 
For $k\ge m\ge -1$, if $p\in \tilde{\mathscr P}_{m}(k,\mu_1,\C,\R)$ and 
$q\in \tilde{\mathscr P}_{m}(k,\mu_2,\C^{n},\R^{n})$, then
     $pq\in \tilde{\mathscr P}_{m}(k,\mu_1+\mu_2,\C^{n},\R^{n})$.
\end{corollary}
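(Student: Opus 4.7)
The plan is to derive this corollary as an immediate specialization of Lemma \ref{prodclass}. The key observation is that scalar-by-vector multiplication is itself a bilinear map fitting the hypotheses of that lemma, so there is essentially nothing new to prove beyond the bookkeeping of identifying the right operator.

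First, I would introduce the pointwise multiplication map $M:\C\times\C^n\to\C^n$ defined by $M(a,v)=av$. This is bilinear (it is $\C$-linear in each argument separately, by distributivity of scalar multiplication), so $M$ is a $2$-linear mapping from $\C^{m_1}\times\C^{m_2}$ to $\C^{m_0}$ with $m_1=1$, $m_2=n$, $m_0=n$. Next, I would check the conjugation hypothesis \eqref{Mbicond}: for every $a\in\C$ and $v\in\C^n$ one has $\overline{M(a,v)}=\overline{av}=\bar a\,\bar v=M(\bar a,\bar v)$, which is \eqref{Mbicond} with $s=2$.

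With $M$ verified to satisfy all the hypotheses of Lemma \ref{prodclass}, I would apply that lemma with $s=2$, $p_1=p\in\tilde{\mathscr P}_m(k,\mu_1,\C,\R)$, and $p_2=q\in\tilde{\mathscr P}_m(k,\mu_2,\C^n,\R^n)$. The conclusion \eqref{Mpq} then reads
\[
M(p,q)\in\tilde{\mathscr P}_m(k,\mu_1+\mu_2,\C^n,\R^n),
\]
and since $M(p(z,\zeta),q(z,\zeta))=p(z,\zeta)\,q(z,\zeta)$ pointwise (respectively $M(p(z),q(z))=p(z)q(z)$ in the case $\tilde{\mathscr P}=\mathscr P$), this is exactly the desired statement $pq\in\tilde{\mathscr P}_m(k,\mu_1+\mu_2,\C^n,\R^n)$.

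There is no genuine obstacle here; the whole corollary is a one-line reduction. The only mild subtlety is to check that the domain dimensions $m_1=1$, $m_2=n$ and codomain dimension $m_0=n$ are legitimate values in the statement of Lemma \ref{prodclass} (the lemma is stated for arbitrary positive integers $m_j$), and that the assertion \eqref{Mpq} preserves the $\R$-valued structure, which is guaranteed because the output class $\tilde{\mathscr P}_m(k,\cdot,\C^n,\R^n)$ is precisely what Lemma \ref{prodclass} produces under hypothesis \eqref{Mbicond}.
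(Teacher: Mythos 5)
Your proposal is correct and is exactly the paper's own proof: the paper also applies Lemma \ref{prodclass} with $s=2$, $m_1=1$, $m_2=m_0=n$, and $M(x,y)=xy$. You simply spell out the verification of the conjugation hypothesis \eqref{Mbicond}, which the paper leaves implicit.
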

\begin{proof}
This is a consequence of Lemma \ref{prodclass} applied to $s=2$, $m_1=1$, $m_2=m_0=n$, and $M(x,y)=xy$ for $x\in \C$ and $y\in\C^n$.   
\end{proof}

\section{The asymptotic behavior of solutions}\label{asympsec}

We come to the main task of establishing the asymptotic behavior, as $t\to\infty$, for solutions of equation \eqref{mainode}.

\subsection{The first asymptotic approximation}\label{firstapprx}
In this subsection, we consider equation \eqref{mainode} in $\C^n$.

\begin{assumption}\label{complexFf}
    The function $F:\C^n\to\C^n$ is continuous and satisfies 
    \beq \label{Fbe}
    |F(x)|=\bigo(|x|^\beta), \text{ as $x\to 0$,  for some $\beta>1$.}
\eeq

The function $f:[T,\infty)\to \C^n$, for some $T\ge 0$, is continuous. 
\end{assumption}

\begin{theorem}\label{thmap1}
Suppose Assumptions \ref{complexA} and  \ref{complexFf} hold true.
Assume there exists a function $p\in\mathscr P_{m}(k,-\mu,\C^n)$, for some integers $k\ge m\ge 0$ and positive number $\mu$, such that 
\beq\label{fperr}
|f(t)-p(\widehat\LL_k(t))|=\bigo(\iln_m(t)^{-\mu-\delta_0})\text{ as $t\to\infty$,}
\eeq
for some number $\delta_0>0$.  Let $y(t)\in\C^n$ be a solution of \eqref{mainode} as in \eqref{soln} and \eqref{decay}.
Then there is $\varep>0$ such that 
\beq\label{abehav}
|y(t)-(\mathcal Z_A p)(\widehat\LL_k(t))|=\bigo(\psi(t)^{-\mu-\varep})\text{ as $t\to\infty$.}
\eeq
\end{theorem}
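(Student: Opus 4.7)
My plan combines a Gronwall/Volterra argument for the initial pointwise rate of $|y(t)|$ with a single application of the linear approximation Theorem \ref{iterlog}. I read the symbol $\psi(t)$ in \eqref{abehav} as $\iln_m(t)$, since $\psi$ is otherwise undefined here and this matches the decay scale dictated by \eqref{LLp}.

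First I would establish the seed estimate $|y(t)|=\bigo(\iln_m(t)^{-\mu+\delta})$ for arbitrarily small $\delta>0$. By \eqref{decay}, fix $t_0$ large enough that $|y(t)|\le \eta$ for $t\ge t_0$, with $\eta>0$ chosen so small that $C\eta^{\beta-1}<\lambda_1/2$, where $C$ comes from \eqref{Fbe} and $\lambda_1$ from \eqref{eA2}. In the variation of constants formula
\beqs
y(t_0+t) = e^{-tA}y(t_0) + \int_0^t e^{-(t-\tau)A}F(y(t_0+\tau))\,d\tau + \int_0^t e^{-(t-\tau)A}f(t_0+\tau)\,d\tau,
\eeqs
the first summand is $\bigo(e^{-\lambda_1 t/2})$ by \eqref{eA2}, and the third is $\bigo(\iln_m(t_0+t)^{-\mu+\delta})$ by Lemma \ref{plnlem}, using $|f(t)|=\bigo(\iln_m(t)^{-\mu+\delta})$ (a consequence of \eqref{fperr} and \eqref{LLp}). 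The pointwise bound $|F(y)|\le C\eta^{\beta-1}|y|\le (\lambda_1/2)|y|$ makes the nonlinear contribution a small perturbation, and a Volterra-type Gronwall estimate (the kernel $e^{-\lambda_1(t-\tau)/2}$ has $L^1$-norm bounded by $2/\lambda_1$, and $C\eta^{\beta-1}\cdot 2/\lambda_1<1$) then propagates the $\iln_m$-decay from $f$ to $y$, yielding $|y(t)|=\bigo(\iln_m(t)^{-\mu+\delta})$.

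With this rate in hand, choose $\delta>0$ so small that $\beta(\mu-\delta)>\mu$; this is possible because $\beta>1$. Then by \eqref{Fbe}, $|F(y(t))|=\bigo(\iln_m(t)^{-\beta(\mu-\delta)})$. Rewriting the equation as
\beqs
y'=-Ay+p(\widehat\LL_k(t))+g(t),\quad g(t):=F(y(t))+[f(t)-p(\widehat\LL_k(t))],
\eeqs
we have $|g(t)|=\bigo(\iln_m(t)^{-\alpha^*})$ with $\alpha^*=\min(\beta(\mu-\delta),\mu+\delta_0)>\mu$. Theorem \ref{iterlog} applied with this $p$ and $g$ then delivers \eqref{abehav} for some $\varep>0$.

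The principal obstacle is the Volterra/Gronwall step: extracting the pointwise rate $\iln_m(t)^{-\mu+\delta}$ for $|y(t)|$ from the bare fact $y(t)\to 0$, by using the smallness of $|y|$ to absorb the nonlinearity against the dissipation of $e^{-tA}$ while tracking the $\iln_m$-decay inherited from $f$. Once this seed rate is secured, the super-linearity $\beta>1$ of $F$ automatically pushes $F(y)$ below the decay rate of $f$, and the linear approximation result Theorem \ref{iterlog} finishes the job without further iteration.
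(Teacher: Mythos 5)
Your proof is correct and follows essentially the same route as the paper: obtain the seed rate $|y(t)|=\bigo(\iln_m(t)^{-\mu+\delta})$, use $\beta>1$ to push $|F(y(t))|$ strictly below the scale $\iln_m(t)^{-\mu}$, and invoke Theorem \ref{iterlog} with exactly the same residual $g(t)=F(y(t))+f(t)-p(\widehat\LL_k(t))$; your reading $\psi=\iln_m$ is also the intended one. The single point of departure is the seed estimate: the paper dispatches it by citing Theorem 6.2(ii) of \cite{H5} (remarking that its proof carries over verbatim with a general exponent $\beta>1$ in place of $2$), whereas you write out the underlying Volterra/Gronwall argument in full. That inlined argument is sound; the only small caveat is that the smallness threshold for $\eta$ must absorb the constant $C_\lambda$ from \eqref{eA2} and the constant coming from Lemma \ref{plnlem} applied with weight $\iln_m(t_0+\tau)^{-(\mu-\delta)}$, not merely $C\cdot 2/\lambda_1<1$; this changes only how small $\eta$ must be chosen and does not affect the conclusion.
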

\begin{proof}
Let $\delta$ be any positive number such that $\delta<\min\{\mu,\delta_0\}$. By the triangle inequality
$|f(t)|\le |p(\widehat\LL_k(t))|+|f(t)-p(\widehat\LL_k(t))|,$
the limit \eqref{LLp} and assumption \eqref{fperr},
we then have 
$$\lim_{t\to\infty} \frac{|f(t)|}{\iln_m(t)^{-\mu+\delta}}=0.$$
We can apply Theorem 6.2(ii) of \cite{H5} to equation \eqref{mainode}. Indeed, the same proof goes through when the nonlinear function $F(x)$ satisfies 
\eqref{Fbe} with the real number $\beta>1$ replacing $\beta=2$ in \cite{H5}.
We obtain 
\beq\label{ydec}
|y(t)|=\bigo(\iln_m(t)^{-\mu+\delta}).
\eeq
Utilizing estimate \eqref{Fbe} and choosing $\delta$ sufficiently small in \eqref{ydec} yield 
\beq\label{Gyest}|F(y(t))|=\bigo(\iln_m(t)^{\beta(-\mu+\delta)})=\bigo(\iln_m(t)^{-\mu-\delta_1}),
\eeq 
for some number $\delta_1>0$. 
We rewrite equation \eqref{mainode} as
$$y'+Ay=p(\widehat\LL_k(t))+g(t),\text{ where }g(t)=F(y(t)) + f(t)-p(\widehat\LL_k(t)).$$
With estimates \eqref{fperr} and  \eqref{Gyest}, we have $|g(t)|=\bigo(\iln_m(t)^{-\mu-\delta_2})$ with $\delta_2=\min\{\delta_0,\delta_1\}>0$. 
Then applying Lemma \ref{iterlog} to equation \eqref{mainode} results in the estimate  \eqref{abehav} from \eqref{yZp}.
\end{proof}

\subsection{The asymptotic expansions}
\label{secE}

We now turn to Theorem \ref{mainthm} and consider equation \eqref{mainode} in $\R^n$.
Assume, throughout this subsection, all Assumptions  \ref{assumpA}, \ref{assumpG}, \ref{fmain} and \ref{aspone} are true.
Hereafter, $y(t)\in\R^n$ is a   solution of \eqref{mainode} as in \eqref{soln} and \eqref{decay}.

\subsubsection*{The construction}\label{construct}
We apply Theorem \ref{thmap1} first. Condition \eqref{Fbe} is met with $\beta=\beta_1>1$ thanks to \eqref{Gyy}.
Condition \eqref{fperr} is met, thanks to the asymptotic expansions \eqref{fas1} and \eqref{fas2}, with $p=p_1$, $m=m_*$, $k=n_1$ and $\mu=\gamma_1$.
Then applying Theorem \ref{thmap1} yields
\beq \label{yq1}
|y(t)-q_1(\widehat\LL_{n_1}(t))|=\bigo(\iln_{m_*}(t)^{-\gamma_1-\delta_1})\text{ for some number $\delta_1>0$,}    
\eeq
where $q_1=\mathcal Z_A p_1$. It follows the fact $p_0 \in {\mathscr P}_{m_*}^+(n_1)$ and \eqref{strictPp} that $\alpha_{-1}=0$ in \eqref{pzdef} for the function $p_1$ and, hence, $\mathcal Z_A p_1=A^{-1}p_1$ in the definition \eqref{ZAp}. Thus,
\beq\label{qoZA}
q_1=\mathcal Z_A p_1=A^{-1}p_1.
\eeq
Combining \eqref{qoZA} with \eqref{pocond} yields 
\beq \label{q1def}
    q_1(z)=\zeta_*(z)z_{m_*}^{-\gamma_1}\xi_*,
\text{ where }
\zeta_*=p_0\in {\mathscr P}_{m_*}^+(n_1)\text{ and } \xi_*=A^{-1}\xi_0\ne 0.
\eeq
Note that $\zeta_*$ in \eqref{q1def} has the property \eqref{zestacond} in Theorem \ref{mainthm} with $n_0=n_1$.

Let $r\in \N$ and $s\in\Z_+$.  Since $F_r$ is a $C^\infty$-function in a neighborhood of $\xi_*\ne 0$, we have the following Taylor's expansion centered at $\xi_*$, for any $h\in\R^n$,
\beq\label{Taylor}
F_r(\xi_*+h)=\sum_{m=0}^s \frac1{m!}D^mF_r(\xi_*)h^{(m)}+g_{r,s}(h),
\eeq
where $D^m F_r(\xi_*) $ is the $m$-th order derivative of $F_r$ at  $\xi_*$, and
\beq\label{grs}
g_{r,s}(h)=\bigo(|h|^{s+1})\text{ as } h\to 0.
\eeq
For $m\ge 0$, denote
\beq\label{Frm}
\mathcal F_{r,m}=\frac1{m!}D^m F_r(\xi_*).
\eeq
When $m=0$, \eqref{Frm} reads as $\mathcal F_{r,0}=F_r(\xi_*)$. When $m\ge 1$, $\mathcal F_{r,m}$ is an $m$-linear mapping from $(\R^n)^m$ to $\R^n$.

By \eqref{multiL}, one has, for any $r,m\ge 1$, and $y_1,y_2,\ldots,y_m\in\R^n$, that
\beq\label{multineq}
|\mathcal F_{r,m}(y_1,y_2,\ldots,y_m)|\le \|\mathcal F_{r,m}\|\cdot |y_1|\cdot |y_2|\cdots |y_m|.
\eeq
For our convenience, we write inequality \eqref{multineq} even when $m=0$ with $\|\mathcal F_{r,0}\|\eqdef |F_r(\xi_*)|$.

Below, we rewrite the asymptotic expansions \eqref{fas1} and \eqref{fas2} in a suitable form which is crucial for the proof of Theorem \ref{mainthm2}, and hence, of Theorem \ref{mainthm}.
\begin{definition}\label{SS1}
We define a set $\widetilde {\mathcal S}\subset [0,\infty)$ as follows. 

In the case of \eqref{fas1} and \eqref{Gex}, let 
\beq \label{btil}
\widetilde\beta_j=\beta_j-1>0\text{ for }j\in\N,
\eeq 
and
\beq\label{deftilS}
\begin{aligned}
\widetilde {\mathcal S}=\Big\{& \sum_{k=1}^{\infty} m_k(\gamma_k-\gamma_1)+\sum_{j=1}^\infty \ell_j \widetilde\beta_j \gamma_1+(1-{\rm sgn}(m_*))\kappa:
m_k, \ell_j,\kappa\in \Z_+,\\
& \text{ with $m_k> 0$ for only finitely many $k$, and} \\ 
& \text{ with $\ell_j> 0$ for only finitely many $j$} \Big\}.
\end{aligned}
\eeq

In the case of \eqref{fas1} and  \eqref{Gef}, take $j=1,2,\ldots,N_*$ in \eqref{btil} and \eqref{deftilS}.

In the case of \eqref{fas2} and  \eqref{Gex}, take $k=1,2,\ldots,K$ in \eqref{deftilS}.

In the case of \eqref{fas2} and  \eqref{Gef}, take $j=1,2,\ldots,N_*$ in \eqref{btil} and \eqref{deftilS}, and take $k=1,2,\ldots,K$ in \eqref{deftilS}.

In four cases,  the set $\widetilde {\mathcal S}$ has countably, infinitely many elements. 
Arrange $\widetilde {\mathcal S}$ as a sequence $(\widetilde \mu_k)_{n=1}^\infty$ of non-negative and strictly increasing numbers.
Set 
\beqs
\mu_k=\widetilde \mu_k+\gamma_1 \text{ for $k\in\N$, and define } {\mathcal S}=\{\mu_k:k\in\N\}.
\eeqs
\end{definition}

The set $\widetilde {\mathcal S}$ has the following elementary properties.
\begin{enumerate}[label=\rnum]
\item Clearly, 
\beq \label{mu1}
    \widetilde \mu_1=0 \text{ and }\mu_1=\gamma_1.
\eeq 

\item For $\ell\in\N$ in the case of \eqref{fas1} or $1\le \ell\le K$ in the case \eqref{fas2}, by choosing $m_k=\delta_{k\ell}$, and $\ell_j=0$ for all $j$ in \eqref{deftilS}, we have $\gamma_\ell-\gamma_1\in\widetilde {\mathcal S}$. Hence, 
\beq \label{ldS}  \gamma_\ell\in {\mathcal S}\text{ for all }\ell.
\eeq

\item 
The numbers $\mu_k$ are positive and strictly increasing. Also,
\beq\label{mulim}
\widetilde\mu_k\to\infty \text{ and }\mu_k\to\infty \text{ as } k\to\infty.
\eeq

\item For all $x,y\in \widetilde {\mathcal S}$ and $k\in\N$, one has
\beq\label{Sprop} 
x+y,\ x+\widetilde\beta_k\mu_1\in\widetilde {\mathcal S}.
\eeq 
As a consequence of \eqref{Sprop}, one has  
\beq\label{muN}
\widetilde \mu_k + \widetilde\beta_j \mu_1 \ge \widetilde \mu_{k+1} \text{ for all } k,j\in\N.
\eeq

\item\label{rme} When $m_*=0$, one has $1-{\rm sgn}(m_*)=1$, hence, $\widetilde {\mathcal S}$ and ${\mathcal S}$ preserve the unit increment.
\noindent
When $m_*\ge 1$, one has $1-{\rm sgn}(m_*)=0$, hence,  ${\mathcal S}$ does not necessarily preserve the unit increment.
\end{enumerate}

For $k\in\N$, let  
\beqs
    n_k=\begin{cases} \max\{\widetilde n_j:1\le j\le k\},&\text{ in the case   \eqref{fas1},}\\
                      n_*,&\text{ in the case  \eqref{fas2}.}
     \end{cases}
\eeqs
When $k=1$, this definition of $n_1$ agrees with \eqref{n1def}.
Then one has the  the embedding 
\beq \label{PPemb}
    \widehat{\mathscr P}(\widetilde n_k,\C^n)\subset \widehat{\mathscr P}(n_k,\C^n).
\eeq
Thanks to \eqref{ldS} and by re-indexing $p_k$ and using the embedding \eqref{PPemb},
we can rewrite \eqref{fas1} and verify that $f(t)$ has the asymptotic expansion
\beq\label{mainf}
f(t) \sim  \sum_{k=1}^\infty p_k(\widehat{\LL}_{n_k}(t)),\text{ where   $p_k\in \widehat{\mathscr P}_{m_*}(n_k,-\mu_k,\C^n,\R^n)$ for  $k\in\N$.}
\eeq
Note that, because of \eqref{mu1}, $p_1$ is the same function in both \eqref{fas1} and \eqref{mainf}, or both \eqref{fas2} and \eqref{mainf}.

We now construct the functions $q_k$ in the desired asymptotic expansion \eqref{solnxp} of the solution $y(t)$.

\begin{definition}\label{qcdef}
We define $q_k(z,\zeta)$ and $\chi_k(z,\zeta)$, for $k\in \N$, recursively as follows. 

Let $q_1$ be defined by \eqref{q1def} and $\chi_1=0$.

Let $k\ge 2$ and suppose $q_j$ and $\chi_j$ are already defined for $1\le j\le k-1$. 
For $m_*\ge 1$, let $\chi_k=0$, and for $m_*=0$, let 
\beq\label{chikdef}
\chi_k(z,\zeta)=\begin{cases}
\begin{displaystyle}
\mathcal Rq_\lambda (z,\zeta) + \mathcal R\zeta_* (z)
\frac{\partial q_\lambda}{\partial\zeta}(z,\zeta) \end{displaystyle}
&\text{if there exists $\lambda\le k-1$}\\
&\text{such that $\mu_\lambda+1=\mu_k$,}\\
0&\text{otherwise.}
    \end{cases}
\eeq
Then define
\begin{multline}\label{Qkdef}
\mathcal Q_k(z,\zeta)\\
=\sum_{r\ge 1} \sum_{m=0}^\infty \sum_{\substack{k_1,\ldots,k_{m} \ge 2,\\ \sum_{j=1}^m \mu_{k_j}+(\beta_r-m)\mu_1= \mu_k}} 
(z_{m_*}^{-\mu_1}\zeta)^{\beta_r-m} 
\mathcal F_{r,m}(q_{k_1}(z,\zeta),q_{k_2}(z,\zeta),\ldots, q_{k_{m}}(z,\zeta)),
\end{multline}
and
\beq \label{qkd}    
q_k  
=\mathcal Z_A( \mathcal Q_k +p_k-\chi_k).
\eeq  
\end{definition}

The following explanations and remarks are in order.

\begin{enumerate}[label=\rnum]
\item First, we verify that  \eqref{qkd} is actually a recursive formula.
It is obvious that $\chi_k$ is defined based on the previous $q_1,\ldots,q_{k-1}$.
The last summation in \eqref{Qkdef} is over $k_1,k_2,\ldots,k_m$. The second constraint in this sum satisfies
\beq \label{muequi}
    \sum_{j=1}^m \mu_{k_j}+(\beta_r-m)\mu_1= \mu_k \text{ which is equivalent to }
\sum_{j=1}^m  \widetilde \mu_{k_j}+\widetilde \beta_r \mu_1 =\widetilde \mu_k.
\eeq
Because $\widetilde \beta_r>0$, we assert $ \widetilde \mu_{k_j}>\widetilde \mu_k$, which yields $k_j<k$, and hence
\beq \label{kjsmall}
    k_j\le k-1.
\eeq
Thus, $\mathcal Q_k$ in \eqref{Qkdef} is defined based on the previous $q_1,\ldots,q_{k-1}$. 
Consequently, so is $q_k$ in \eqref{qkd}.

\item In the case $m_*\ge 1$, one has $\chi_k=0$ for all $k\ge 1$.
In \eqref{chikdef}, the index $\lambda$, if exists, is unique. 

\item In the case of assumption \eqref{Gex}, the index $r$ in formula \eqref{Qkdef} of  $\mathcal Q_k$ is taken over the whole set $\N$.
In the case of assumption \eqref{Gef}, the index $r$ in \eqref{Qkdef} is restricted to $1,2,\ldots,N_*$.

\item When $m=0$, the terms $q_{k_j}$ in formula \eqref{Qkdef} of  $\mathcal Q_k$ are not needed, see the explanation after \eqref{Frm}, hence the condition $k_j\ge 2$ is ignored, and the last summation in  \eqref{Qkdef} becomes 
\beq\label{mzero} 
 F_r(\xi_*) \text{ for } \widetilde\beta_r \mu_1=\widetilde\mu_k,\text{ that is, } \beta_r \mu_1=\mu_k.
\eeq
Note, in \eqref{mzero}, that such an index $r$ may or may not exist. In the latter case, the term is understood to be zero. In the former case, $r$ is uniquely determined.

\item\label{checkfin} Below, we verify that the summations in \eqref{Qkdef} are over only  finitely many indices.
Let $k\ge 2$ be fixed. Firstly, thanks to \eqref{muequi}, the indices in the sum of $\mathcal Q_k$ in \eqref{Qkdef} satisfy
\beqs 
\widetilde\mu_k=\sum_{j=1}^{m} \widetilde \mu_{k_j}+\widetilde\beta_r\mu_1\ge \widetilde\beta_r \mu_1,
\eeqs
which implies
\beq\label{d1}
\widetilde\beta_r\le \widetilde\mu_k/\mu_1.
\eeq 
Secondly, for $m\ge 1$, recalling $k_j\ge 2$, one has
\beqs
\widetilde\mu_k=\sum_{j=1}^m \widetilde\mu_{k_j}+\widetilde\beta_r\mu_1>\sum_{j=1}^m \widetilde\mu_{k_j} \ge m\widetilde\mu_2,
\eeqs
which yields
\beq\label{d2}
m<\widetilde\mu_k/\widetilde\mu_2.
\eeq
Combining conditions \eqref{d1} and \eqref{d2} with \eqref{kjsmall}, one finds that the sum in \eqref{Qkdef} is over only finitely many $r$, $m$ and $k_j$. 

\item The restrictions found in \eqref{kjsmall}  and part \ref{checkfin} above also result in the following consequence.
For $k\ge 2$, suppose $r^*,s^*,k^*$ are non-negative integers such that 
\beq\label{sumcond} 
\widetilde\beta_{r^*}\ge \widetilde \mu_k/\mu_1,\  s^*\ge \widetilde\mu_k/\widetilde\mu_2,\ k^*\ge k-1.
\eeq 
Then $\mathcal Q_k$ can be equivalently written as 
\beq\label{finitesum}
\mathcal Q_k(z,\zeta)=\sum_{r=1}^{r^*} \sum_{m=0}^{s^*} \sum_{\substack{2\le k_1,k_2,\ldots,k_m \le k^*, \\ \sum_{j=1}^m \widetilde\mu_{k_j}+\widetilde\beta_r\mu_1=\widetilde\mu_k}}
(z_{m_*}^{-\mu_1}\zeta)^{\beta_r-m} 
\mathcal F_{r,m}(q_{k_1}(z,\zeta),q_{k_2}(z,\zeta),\ldots, q_{k_{m}}(z,\zeta)). 
\eeq

\item In the case \eqref{Gef} and $k\ge 2$,  formula \eqref{finitesum} can be recast as
\beq\label{fsf}
\mathcal Q_k(z,\zeta)=\sum_{r=1}^{N_*} \sum_{m=0}^{s^*} \sum_{\substack{2\le k_1,k_2,\ldots,k_m \le k^*, \\ \sum_{j=1}^m \widetilde\mu_{k_j}+\widetilde\beta_r\mu_1=\widetilde\mu_k}}
(z_{m_*}^{-\mu_1}\zeta)^{\beta_r-m} 
\mathcal F_{r,m}(q_{k_1}(z,\zeta),q_{k_2}(z,\zeta),\ldots, q_{k_{m}}(z,\zeta)), 
\eeq
for any non-negative integers $s^*,k^*$ satisfying  
\beq\label{scf} 
s^*\ge \widetilde\mu_k/\widetilde\mu_2\text{ and } k^*\ge k-1.
\eeq 

\item Define, for $k\ge 2$,
\beq\label{qtil}
\widetilde q_k(z,\zeta)= \zeta^{-1} z_{m_*}^{\mu_k}q_k (z,\zeta), \text{i.e., }
q_k(z,\zeta)= \zeta z_{m_*}^{-\mu_k}\widetilde q_k (z,\zeta).
\eeq
Then we can rewrite \eqref{finitesum} as
\begin{align*}
&\mathcal Q_k(z,\zeta)\\
&=\sum_{r=1}^{r_*} \sum_{m=0}^{s_*} \sum_{\substack{2\le k_1,\ldots,k_{m} \le k^*,\\ \sum_{j=1}^m \mu_{k_j}+(\beta_r-m)\mu_1=\mu_k}} 
z_{m_*}^{(m-\beta_r)\mu_1-\sum_{j=1}^m \mu_{k_j}} \zeta^{\beta_r}  \mathcal F_{r,m}(\widetilde q_{k_1}(z,\zeta),\widetilde q_{k_2}(z,\zeta),\ldots, \widetilde q_{k_{m}}(z,\zeta)),  
\end{align*}
which gives
\beq \label{Qqtil}
{\mathcal Q}_k(z,\zeta)=z_{m_*}^{-\mu_k} \sum_{r=1}^{r_*} \sum_{m=0}^{s_*} \sum_{\substack{2\le k_1,\ldots,k_{m} \le k^*,\\ \sum_{j=1}^m \widetilde\mu_{k_j}+\widetilde \beta_r\mu_1= \widetilde \mu_k}} 
\zeta^{\beta_r}  \mathcal F_{r,m}(\widetilde q_{k_1}(z,\zeta),\widetilde q_{k_2}(z,\zeta),\ldots, \widetilde q_{k_{m}}(z,\zeta)).
\eeq
\end{enumerate}

\subsubsection*{Proof of the main result}
There is still a question whether $q_k$ and $\chi_k$ belong to classes of functions in Definitions \ref{Fclass} or \ref{Fsubo} so that we can apply operators $\mathcal R$ in \eqref{chikdef} and $\mathcal Z_A$ in \eqref{qkd}. 
We answer this question in the next proposition.

\begin{proposition}\label{qcpropo}
For any $k\in \N$, 
\beq \label{qcclaim}
q_k\in \widehat{\mathscr P}_{m_*}(n_k,-\mu_k,\C^n,\R^n)\text{  and }  
\chi_k\in \widehat{\mathscr P}_{m_*}(n_k,-\mu_k,\C^n,\R^n).    
\eeq
\end{proposition}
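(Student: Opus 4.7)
The plan is to prove both claims simultaneously by strong induction on $k$, exploiting the recursive structure of Definition \ref{qcdef} together with the invariance properties of $\mathcal R$ and $\mathcal Z_A$ from Lemma \ref{invar2}, and the multiplicative closure supplied by Lemma \ref{prodclass} and Corollary \ref{cplxcor}. The base case $k=1$ is immediate: $\chi_1=0$ lies in every class, while \eqref{q1def} writes $q_1(z)=\zeta_*(z)z_{m_*}^{-\gamma_1}\xi_*$ where $\zeta_*\in\mathscr P_{m_*}^+(n_1)\subset\mathscr P_{m_*}(n_1,0,\C,\R)$ and $\xi_*\in\R^n$; multiplying by the monomial $z_{m_*}^{-\gamma_1}$ shifts the rate to $-\mu_1=-\gamma_1$, placing $q_1$ in $\mathscr P_{m_*}(n_1,-\mu_1,\C^n,\R^n)\subset\widehat{\mathscr P}_{m_*}(n_1,-\mu_1,\C^n,\R^n)$.

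Assume now the conclusion holds for all indices $j\le k-1$. For $\chi_k$, the only nontrivial case is $m_*=0$ with some $\lambda\le k-1$ satisfying $\mu_\lambda+1=\mu_k$. By hypothesis $q_\lambda\in\widehat{\mathscr P}_0(n_\lambda,-\mu_\lambda,\C^n,\R^n)$, so Lemma \ref{invar2}\ref{inviii} gives $\mathcal R q_\lambda\in\widehat{\mathscr P}_0(n_\lambda,-\mu_\lambda-1,\C^n,\R^n)=\widehat{\mathscr P}_0(n_\lambda,-\mu_k,\C^n,\R^n)$. For the second summand, $\mathcal R\zeta_*\in\mathscr P_0(n_1,-1,\C,\R)$ by the same lemma, and $\partial q_\lambda/\partial\zeta$ is still in $\widehat{\mathscr P}_0(n_\lambda,-\mu_\lambda,\C^n,\R^n)$: term-by-term differentiation in $\zeta$ sends each coefficient $\xi_{\alpha,\beta}$ to $\beta\xi_{\alpha,\beta}$, and because $\beta\in\R$ the conjugation condition \eqref{xiconj} is preserved. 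Corollary \ref{cplxcor} combined with the embedding $n_\lambda,n_1\le n_k$ then places $\chi_k$ in $\widehat{\mathscr P}_0(n_k,-\mu_k,\C^n,\R^n)$.

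Turning to $q_k=\mathcal Z_A(\mathcal Q_k+p_k-\chi_k)$, it suffices (by Lemma \ref{invar2}\ref{invii} and the inclusion \eqref{Pm10}) to show that $\mathcal Q_k+p_k-\chi_k$ belongs to $\widehat{\mathscr P}_{m_*}(n_k,-\mu_k,\C^n,\R^n)$. The term $p_k$ lies in this class by \eqref{mainf} (with $p_k\equiv 0$ at indices not matched by the original expansion of $f$), and $\chi_k$ does by the preceding step. For a typical summand of $\mathcal Q_k$,
\[
T=(z_{m_*}^{-\mu_1}\zeta)^{\beta_r-m}\mathcal F_{r,m}(q_{k_1},\ldots,q_{k_m}),
\qquad \sum_{j=1}^m\mu_{k_j}+(\beta_r-m)\mu_1=\mu_k,
\]
the scalar factor $z_{m_*}^{-(\beta_r-m)\mu_1}\zeta^{\beta_r-m}$ lies in $\widehat{\mathscr P}_{m_*}(n_k,-(\beta_r-m)\mu_1,\C,\R)$. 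Since each $k_j\le k-1$ and $n_{k_j}\le n_k$, the inductive hypothesis and the embedding of the $n_{k_j}$-class into the $n_k$-class give $q_{k_j}\in\widehat{\mathscr P}_{m_*}(n_k,-\mu_{k_j},\C^n,\R^n)$. Complexifying $\mathcal F_{r,m}$ as in Definition \ref{cmplxify} yields an $m$-linear map from $(\C^n)^m$ to $\C^n$ satisfying \eqref{MCbar}, hence Lemma \ref{prodclass} gives $\mathcal F_{r,m}(q_{k_1},\ldots,q_{k_m})\in\widehat{\mathscr P}_{m_*}(n_k,-\sum_j\mu_{k_j},\C^n,\R^n)$, and Corollary \ref{cplxcor} combines it with the scalar factor to yield $T\in\widehat{\mathscr P}_{m_*}(n_k,-\mu_k,\C^n,\R^n)$ by the summation constraint. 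The $m=0$ case, where $T=(z_{m_*}^{-\mu_1}\zeta)^{\beta_r}F_r(\xi_*)$ with $\beta_r\mu_1=\mu_k$ and $F_r(\xi_*)\in\R^n$, is direct. Since the triple sum in \eqref{Qkdef} is finite (construction remark (v)), $\mathcal Q_k$ belongs to the asserted class, and the induction closes.

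The main obstacle is not any single identity but the bookkeeping: verifying that at every stage (derivative in $\zeta$, action of $\mathcal R$ and $\mathcal Z_A$, and multilinear composition through the complexification) both the conjugation condition \eqref{xiconj} and the $\R^n$-valuedness survive, while the decay exponent tracks through the arithmetic constraint $\sum_j\mu_{k_j}+(\beta_r-m)\mu_1=\mu_k$ and all sums remain finite so that Definition \ref{Fsubo} is applicable.
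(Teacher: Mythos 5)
Your proof is correct and follows essentially the same strong induction as the paper: base case from \eqref{q1def}, then for the inductive step it invokes Lemma \ref{invar2} for $\mathcal R$, $\mathcal Z_A$, the complexification $\mathcal F_{r,m,\C}$ and Lemma \ref{prodclass} for the multilinear terms in $\mathcal Q_k$, Corollary \ref{cplxcor} to attach the scalar factor $(z_{m_*}^{-\mu_1}\zeta)^{\beta_r-m}$, and the finiteness of the triple sum to conclude. Your explicit justification that $\partial q_\lambda/\partial\zeta$ preserves \eqref{xiconj} (since $\beta\in\R$) is a slight improvement in care over the paper, which subsumes that step under Lemma \ref{invar2} without comment.
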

\begin{proof}
We prove \eqref{qcclaim} by induction.
It is clear that \eqref{qcclaim} holds true for $k=1$.
Let $k\ge 2$. Suppose 
\beq \label{qchypo}
q_j\in \widehat{\mathscr P}_{m_*}(n_j,-\mu_j,\C^n,\R^n)\text{  and }  
\chi_j\in \widehat{\mathscr P}_{m_*}(n_j,-\mu_j,\C^n,\R^n)\text{ for }1\le j\le k-1.    
\eeq

Consider the case when  $m_*=0$ and $\chi_k$ is given by the first formula in \eqref{chikdef}. Thanks to the hypothesis \eqref{qchypo} and Lemma \ref{invar2}, we have 
\begin{align*}
&\mathcal Rq_\lambda \in \widehat{\mathscr P}_{m_*}(n_k,-\mu_\lambda-1,\C^n,\R^n)=\widehat{\mathscr P}_{m_*}(n_k,-\mu_k,\C^n,\R^n) ,\\
&\mathcal R\zeta_* \in \widehat{\mathscr P}_{m_*}(n_k,-1,\C,\R),\quad 
\frac{\partial q_\lambda}{\partial\zeta}\in  \widehat{\mathscr P}_{m_*}(n_k,-\mu_\lambda,\C^n,\R^n). 
\end{align*}
Using the last two properties and applying Corollary \ref{cplxcor}, we obtain that the product 
$$\mathcal R\zeta_* \frac{\partial q_\lambda}{\partial\zeta}\text{ belongs to }
\widehat{\mathscr P}_{m_*}(n_k,-\mu_\lambda-1,\C^n,\R^n)=\widehat{\mathscr P}_{m_*}(n_k,-\mu_k,\C^n,\R^n).$$
Thus, 
\beq \label{chigood}
    \chi_k\in \widehat{\mathscr P}_{m_*}(n_k,-\mu_k,\C^n,\R^n).
\eeq
When $\chi_k=0$ in either case $m_*\ge 1$ or $m_*=0$, \eqref{chigood} trivially holds.

Recalling the complexification in Definition \ref{cmplxify} and the mappings $\mathcal F_{r,m}$ in \eqref{Frm}, we define 
\beqs
    \mathcal F_{r,m,\C}=
    \begin{cases}
        \mathcal F_{r,0}=\mathcal F_r(\xi_*), &\text{ for } m=0,\\
        \text{the complexification of $\mathcal F_{r,m}$,}& \text{ for }m\ge 1.
    \end{cases}
\eeqs

We examine the construction of $q_k$ in \eqref{Qkdef} and \eqref{qkd}.
Because the functions $q_{k_j}$ are $\R^n$-valued, we rewrite \eqref{Qkdef} as
\begin{multline}\label{QkC}
\mathcal Q_k(z,\zeta)\\
=\sum_{r\ge 1} \sum_{m=0}^\infty \sum_{\substack{k_1,\ldots,k_{m} \ge 2,\\ \sum_{j=1}^m \mu_{k_j}+(\beta_r-m)\mu_1= \mu_k}} 
(z_{m_*}^{-\mu_1}\zeta)^{\beta_r-m} 
\mathcal F_{r,m,\C}(q_{k_1}(z,\zeta),q_{k_2}(z,\zeta),\ldots, q_{k_{m}}(z,\zeta)),
\end{multline}

When $m=0$, the summand in \eqref{QkC} becomes, thanks to \eqref{mzero}, 
$$z_{m_*}^{-\mu_1\beta_r}\zeta^{\beta_r} 
\mathcal F_{r}(\xi_*)=z_{m_*}^{-\mu_k}\zeta^{\beta_r} 
\mathcal F_{r}(\xi_*)$$
which is a function in $\widehat{\mathscr P}_{m_*}(n_k,-\mu_k,\C^n,\R^n)$.

Consider $m\ge 1$.
By the property \eqref{MCbar} applied to $\mathcal F_{r,m,\C}$, hypothesis \eqref{qchypo} and applying  Lemma \ref{prodclass}, we have
\beqs
    \mathcal F_{r,m,\C}(q_{k_1}(z,\zeta),q_{k_2}(z,\zeta),\ldots, q_{k_{m}}(z,\zeta))
    \in \widehat{\mathscr P}_{m_*}(n_k,-\sum_{j=1}^m \mu_{k_j},\C^n,\R^n).
\eeqs
Thus, the summand $(z_{m_*}^{-\mu_1}\zeta)^{\beta_r-m} 
\mathcal F_{r,m,\C}(\cdots)$ in \eqref{QkC} belongs to $\widehat{\mathscr P}_{m_*}(n_k,-\mu,\C^n,\R^n)$, where
$$\mu=\sum_{j=1}^m \mu_{k_j}+\mu_1(\beta_r-m)=\mu_k.$$ 

In both cases $m=0$ and $m\ge 1$, by summing up the summands  in \eqref{QkC}  finitely many times, see \eqref{finitesum}, we obtain 
\beq \label{Qgood}
    \mathcal Q_k\in \widehat{\mathscr P}_{m_*}(n_k,-\mu_k,\C^n,\R^n).
\eeq
Thanks to the property of $p_k$ in \eqref{mainf}, property \eqref{chigood} of $\chi_k$ and \eqref{Qgood} of $\mathcal Q_k$, we have 
\beqs
    \mathcal Q_k+p_k-\chi_k\in \widehat{\mathscr P}_{m_*}(n_k,-\mu_k,\C^n,\R^n).
\eeqs
Then by formula \eqref{qkd} and Lemma \ref{invar2}(ii), we obtain  
$q_k\in \widehat{\mathscr P}_{m_*}(n_k,-\mu_k,\C^n,\R^n)$.

By the Induction Principle, we have \eqref{qcclaim} holds true for all $k\in\N$.
\end{proof}

Let $\psi(t)=\iln_{m_*}(t)$. Recall that $\Xi_k(t)$ is defined in \eqref{Zbar}, which, in fact, is the same as \eqref{Xikbar}, for $k\ge n_0$.
As a consequence of Proposition \ref{qcpropo} and definition \eqref{qtil}, we have $\widetilde q_k\in \widehat{\mathscr P}_{m_*}(n_k,0,\C^n,\R^n)$, which implies, thanks to \eqref{phates},
\beq \label{qtilest}
    |\widetilde q_k\circ \Xi_{n_k}(t)|=\bigo(\psi(t)^\varep)\text{ for any }\varep>0.
\eeq

Clearly, $p_k$  and $q_1$ are also functions of $z$ and $\zeta$ by defining 
$$p_k(z,\zeta)=p_k(z)\text{ and }q_1(z,\zeta)=q_1(z).$$ 
For $k\ge 1$, let 
$$
f_k(t)=p_k(\widehat{\LL}_{n_k}(t))=p_k(\Xi_{n_k}(t)),\ 
y_k(t)=q_k(\Xi_{n_k}(t))
\text{ and }
v_k(t)=y(t)-\sum_{j=1}^k y_j(t).$$
Define 
\beqs 
\theta(t)=(\zeta z_{m_*}^{-\mu_1})\circ \Xi_{n_1}(t)=\zeta_*(\widehat{\LL}_{n_1}(t))\psi(t)^{-\mu_1}.
\eeqs 
With this notation, we have from \eqref{q1def} that 
$$q_1(\Xi _{n_1}(t))=\zeta _*(\widehat{\mathcal {L}}_{n_1}(t))L_{m_*}(t)^{-\mu _1} \xi_*=\theta(t)\xi_*.$$
For $k\in\N$, denote
\beqs
\widetilde y_k(t)=\theta(t)^{-1}  y_k(t)  \text { and } 
\widetilde v_k(t)=\theta(t)^{-1}  v_k(t).
\eeqs
 Recalling that $\widetilde q_k$ is defined by \eqref{qtil}, one has  
\beqs
\widetilde y_k(t)= \theta(t)^{-1}q_k(\Xi_{n_k}(t)) 
=\zeta_*(\widehat{\LL}_{n_1}(t))^{-1} \psi(t)^{\mu_1}\cdot \zeta_*(\widehat{\LL}_{n_1}(t)) \psi(t)^{-\mu_k}  \widetilde  q_k(\widehat{\LL}_{n_1}(t)) ,
\eeqs
 which gives
\beq\label{Ovk1}
\widetilde y_k(t)= \psi(t)^{-\widetilde \mu_k}  \widetilde q_k(\Xi_{n_k}(t)) .
\eeq
Combining \eqref{Ovk1} with \eqref{qtilest} yields
 \beq\label{Ovk2}
\widetilde y_k(t) =\bigo(\psi(t)^{-\widetilde\mu_k+\varepsilon})
=\bigo(\psi(t)^{\varepsilon})\text{ for any number $\varepsilon >0$.}
\eeq

For any $N\in\N$, we write
\begin{align*}
    y(t)&=q_1(\Xi_{n_1}(t))+\sum_{k=2}^N y_k(t) +v_N(t)
    = \theta(t)\xi_* + \sum_{k=2}^N \theta(t)\widetilde y_k(t) +\theta(t)\widetilde v_N(t),
\end{align*}
which gives
\beq \label{ysplit}
     y(t)= \theta(t)\left \{ \xi_* + \sum_{k=2}^N \widetilde y_k(t) +\widetilde v_N(t)\right\}.
\eeq

The following theorem is a detailed version of the Main Theorem \ref{mainthm}.

\begin{theorem}\label{mainthm2}
With $f(t)$ having the asymptotic expansion \eqref{mainf} and $q_k$ defined in Definition \ref{qcdef}, the statement \eqref{qkhc} and asymptotic expansion   \eqref{solnxp} hold true.
\end{theorem}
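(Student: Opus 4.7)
Property \eqref{qkhc} is Proposition \ref{qcpropo}, so only \eqref{solnxp} needs work. The plan is induction on $N$ for the statement
\[
\text{IH}(N):\quad v_N(t) := y(t)-\sum_{k=1}^N q_k(\Xi_{n_k}(t))=\bigo(\psi(t)^{-\mu_N-\delta_N})\text{ for some }\delta_N>0,
\]
where $\psi(t)=\iln_{m_*}(t)$. The base case $N=1$ follows from Theorem \ref{thmap1} applied with $p=p_1$, $\mu=\mu_1=\gamma_1$ (assumption \eqref{fperr} comes from \eqref{mainf}, assumption \eqref{Fbe} from \eqref{Gyy}), noting that $\mathcal Z_A p_1 = q_1$ by \eqref{q1def} and that $p_1$ has no $\zeta$-dependence.

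For the inductive step, I would derive the ODE satisfied by $v_N$. For each $k$, combine Lemma \ref{newdq} with $(A+\mathcal M_{-1})q_k = \mathcal Q_k + p_k - \chi_k$ (equation \eqref{ZAM} and definition \eqref{qkd}) to get
\[
\ddt y_k(t)+Ay_k(t) = (\mathcal Q_k+p_k-\chi_k)(\Xi_{n_k}(t)) + E_k(t),
\]
where $E_k(t)$ is the ``correction'' term from Lemma \ref{newdq}: when $m_*=0$, $E_k=(\mathcal R q_k+\mathcal R\zeta_*\,\partial_\zeta q_k)(\Xi_{n_k})$, which lies in $\widehat{\mathscr P}_{m_*}(n_k,-\mu_k-1,\C^n,\R^n)$ by Lemma \ref{invar2}; when $m_*\ge 1$, $E_k=\bigo(t^{-\gamma})$ for any $\gamma\in(0,1)$, which is negligible against any power of $\psi$. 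Telescoping, the observation is that when $m_*=0$, the design of $\chi_k$ in \eqref{chikdef} exactly cancels those $E_\lambda$ for which $\mu_\lambda+1=\mu_{k'}$ with $k'\le N$; the leftover $E_\lambda$'s have $\mu_\lambda+1>\mu_N$, hence $\mu_\lambda+1\ge\mu_{N+1}$ since $\mathcal S$ preserves unit increment in this regime, so they contribute only to orders $\le -\mu_{N+1}$.

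The main work is to expand $F(y(t))$. Write $y=\theta(\xi_*+h)$ with $\theta=\zeta_*(\widehat\LL_{n_1})\psi^{-\mu_1}>0$ and $h=\sum_{k=2}^N \widetilde y_k+\widetilde v_N$, which tends to $0$. Truncate the expansion $F\sim\sum_r F_r$ at some large $r^*$ and apply Taylor's formula \eqref{Taylor} to each $F_r$ around $\xi_*$ up to order $s^*$; positive homogeneity yields
\[
F_r(y)=\theta^{\beta_r}\Big(F_r(\xi_*)+\sum_{m=1}^{s^*}\mathcal F_{r,m}h^{(m)}+g_{r,s^*}(h)\Big).
\]
Expanding $h^{(m)}$ multilinearly, the pure $\widetilde y_{k_j}$-terms yield $\theta^{\beta_r-m}\mathcal F_{r,m,\C}(y_{k_1},\ldots,y_{k_m})$ of total order $-(\beta_r-m)\mu_1-\sum_j\mu_{k_j}$, which matches the summand in $\mathcal Q_{k^*}$ precisely when this sum equals $-\mu_{k^*}$. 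Summing (finitely, by \eqref{finitesum}), such terms assemble into $\sum_{k^*=2}^{K^*}\mathcal Q_{k^*}(\Xi_{n_{k^*}})$ up to higher-order remainders; the terms involving at least one $\widetilde v_N$-slot have order at most $-(\beta_r-1)\mu_1-\mu_N-\delta_N+\epsilon$; since the closure property $\mathcal S+\widetilde\beta_1\mu_1\subseteq\mathcal S$ (from \eqref{Sprop}) gives $\mu_{N+1}-\mu_N\le\widetilde\beta_1\mu_1=(\beta_1-1)\mu_1$, every $\widetilde v_N$-contribution is $\bigo(\psi^{-\mu_{N+1}-\epsilon'})$ for some $\epsilon'>0$. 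The Taylor remainders $g_{r,s^*}(h)$ and the tail $r>r^*$ are pushed below $\psi^{-\mu_{N+1}}$ by choosing $s^*$ and $r^*$ large (using $|h|=\bigo(\psi^{-\widetilde\mu_2+\epsilon})$ and the asymptotic bound \eqref{Ner}).

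Combining everything, truncating $f$'s expansion \eqref{mainf} at index $K''$ large enough so $R_f=\bigo(\psi^{-\mu_{N+1}-\epsilon''})$, and collecting, the ODE for $v_N$ reads
\[
v_N'+Av_N=(\mathcal Q_{N+1}+p_{N+1}-\chi_{N+1})(\Xi_{n_{N+1}}(t))+g(t),
\]
where $g(t)=\bigo(\psi(t)^{-\mu_{N+1}-\epsilon})$ absorbs every term of order strictly greater than $\mu_{N+1}$ (the Taylor and $F$-tail remainders, the leftover corrections $E_\lambda$ and $\mathcal Q_{k^*},p_k$ for $k,k^*>N+1$, the $\widetilde v_N$-contributions, and the $\bigo(t^{-\gamma})$ terms when $m_*\ge 1$). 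Applying Theorem \ref{newlin} with $p=\mathcal Q_{N+1}+p_{N+1}-\chi_{N+1}\in\widehat{\mathscr P}_{m_*}(n_{N+1},-\mu_{N+1},\C^n)$ and using $q_{N+1}=\mathcal Z_A(\mathcal Q_{N+1}+p_{N+1}-\chi_{N+1})$ from \eqref{qkd} yields IH$(N+1)$, completing the induction and hence \eqref{solnxp}.

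The principal obstacle is the bookkeeping in the third paragraph: one must verify that after using the design identities \eqref{chikdef} and \eqref{Qkdef}, the cancellations of $E_k$ via $\chi_{k'}$ and of the $F_r$-expansion via $\mathcal Q_{k^*}$ leave only a $-\mu_{N+1}$ term and a genuine lower-order remainder. The crucial quantitative ingredient is the closure inequality $\mu_{N+1}-\mu_N\le(\beta_1-1)\mu_1$ extracted from \eqref{Sprop}, without which the $\widetilde v_N$-contributions would not automatically fall below the $-\mu_{N+1}$ threshold regardless of the size of the inductive gain $\delta_N$.
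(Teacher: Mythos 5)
Your proof is essentially the same as the paper's: induction on $N$, base case via Theorem~\ref{thmap1}, inductive step by scaling $y=\theta(\xi_*+\widetilde v_1)$ and Taylor-expanding each $F_r$ about $\xi_*$, exploiting the design identity $(A+\mathcal M_{-1})q_k=\mathcal Q_k+p_k-\chi_k$ and the closure inequality \eqref{muN}, and closing with Theorem~\ref{newlin} (which is indeed the right tool here since $\mathcal Q_{N+1}$ carries a $\zeta$-dependence — the paper's text cites Theorem~\ref{iterlog} at this point, which appears to be a typographical slip). One small imprecision: your parenthetical claim $|h|=\bigo(\psi^{-\widetilde\mu_2+\epsilon})$ is only guaranteed a priori at rate $\bigo(\psi^{-\delta_1/2})$ (cf.\ \eqref{util1}), where $\delta_1$ need not exceed $\widetilde\mu_2$; this does not affect the argument, since all that is needed is that $|h|$ decays at some fixed positive rate so that $s^*$ can be chosen large enough as in \eqref{schoice}.
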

\begin{proof}
Property \eqref{qkhc} was already proved in Proposition \ref{qcpropo}. We prove the asymptotic expansion \eqref{solnxp} now. Below, $t$ is sufficiently large.

\medskip\noindent
Part I. Consider $m_*=0$.
We will prove for the case \eqref{Gex} first, and then for the case \eqref{Gef}.

\medskip\textit{Case A: Assume \eqref{Gex}.}
For any $N\in \N$, we denote by $(\mathcal T_N)$ the following statement
 \beq\label{inhypo}
\left|y(t) - \sum_{k=1}^N q_k(\Xi_{n_k}(t)) \right|=\bigo(\psi(t)^{-\mu_N-\delta_N})\quad \text{as }t\to\infty,
\eeq
for some $\delta_N>0$. 

We will prove $(\mathcal T_N)$  for all $N\in\N$ by induction in $N$.  

\medskip
\textbf{First step  ($N=1$).} 
Thanks to \eqref{yq1} and the fact $\mu_1=\gamma_1$, the statement $(\mathcal T_1)$ is true.

\medskip
Applying estimate \eqref{plusrate} to $p=\zeta_*$, we have 
\beq \label{zetsdel}
    \zeta_*(\widehat{\LL}_{n_1}(t))^{s} =\bigo(\psi(t)^{\delta})\text{ for any }s\in\R,\delta>0.
\eeq 

Also, estimate \eqref{ydec} becomes
\beq\label{ydec2}
|y(t)|=\bigo(\psi(t)^{-\mu_1+\delta}) \text{ for all }\delta>0.
\eeq

\medskip\textbf{Induction step.} Let $N\ge 1$.  Suppose the statement $(\mathcal T_N)$ holds true. 
By $(\mathcal T_1)$ and the Induction Hypothesis $(\mathcal T_N)$,  one has
\beq\label{uNbo}
   v_1(t)= \bigo(\psi(t)^{-\mu_1 - \delta_1}), \quad  v_N(t)= \bigo(\psi(t)^{-\mu_N - \delta_N}).
\eeq

We derive the differential equation for $v_N(t)$. Compute
\begin{align*}
v_N'
&=y'-\sum_{k=1}^N y_k'
=-Ay + F(y)+f -\sum_{k=1}^N y_k'\\ 
&=-Av_N -\sum_{k=1}^N A y_k + F(y) +f -\sum_{k=1}^N y_k'  .
\end{align*}
Together with the fact
\beqs
f(t)=\sum_{k=1}^{N+1}f_k(t)+\bigo(\psi(t)^{-\mu_{N+1}-\varep_0})\text{ for some number }\varep_0>0,
\eeqs 
we obtain
\beq\label{eqN}
v_N'+Av_N= F(y) - \sum_{k=1}^N (A y_k+y_k')+\sum_{k=1}^{N+1}f_k(t)+\bigo(\psi(t)^{-\mu_{N+1}-\varep_0}).
\eeq

By \eqref{mulim}, we can choose a number $r_*\in\N$ such that 
\beq \label{pchoice}
\beta_{r_*}\ge \mu_{N+1}/\mu_1, \text{ which is equivalent to }
\widetilde\beta_{r_*}\ge \widetilde\mu_{N+1}/\mu_1.
\eeq 
By \eqref{Ner},  one has 
\beq\label{Fcut}
F(x)=\sum_{r=1}^{r_*} F_r (x)+\bigo(|x|^{\beta_{r_*}+\varep_* }) \text{ as }x\to 0, \text{ for some }\varep_*>0.
\eeq
Using \eqref{Fcut} with $x=y(t)$ and utilizing estimate \eqref{ydec2}, we write 
\beq\label{Fy53}
F(y(t))
=\sum_{r=1}^{r_*} F_r (y(t))+\bigo(|y(t)|^{\beta_{r_*}+\varep_* })
=\sum_{r=1}^{r_*} F_r (y(t))+\bigo(\psi(t)^{(-\mu_1+\delta)(\beta_{r_*}+\varep_*)}),
\eeq
for any number $\delta>0$.
Choose $\delta>0$ such that
$$\delta\le \frac{\mu_1\varep_*}{2(\beta_{r_*}+\varep_*)}.$$
Together with condition \eqref{pchoice}, we can estimate the last power in \eqref{Fy53} by 
$$
(\mu_1-\delta)(\beta_{r_*}+\varep_*)=\mu_1\beta_{r_*}+(\mu_1\varep_*-\delta(\beta_{r_*}+\varep_*))
\ge \mu_{N+1} + \mu_1\varep_*/2.
$$
We obtain
\beq\label{Fy}
F(y(t))
=\sum_{r=1}^{r_*} F_r (y(t))+\bigo(\psi(t)^{-\mu_{N+1}-\mu_1\varep_*/2}).
\eeq

We calculate the sum $\sum_{r=1}^{r_*} F_r(y)$.
By estimate \eqref{plusrate} applied to $p=\zeta_*$, $s=-1$, $\delta=\delta_N/2$, and \eqref{uNbo}, we have
 \beq\label{Ountilde1}
\widetilde v_N(t)= \bigo( \zeta_*(\widehat{\LL}_{n_1}(t))^{-1} \psi(t)^{\mu_1}\cdot \psi(t)^{-\mu_N-\delta_N})
= \bigo(\psi(t)^{-\widetilde\mu_N - \delta_N/2} ),
\eeq
and similarly
\beq\label{util1}
\widetilde v_1(t)= \bigo(\psi(t)^{-\widetilde\mu_1 - \delta_1/2} ) =\bigo(\psi(t)^{-\delta_1/2 } ).
\eeq
Applying formula \eqref{ysplit} to $N=1$ yields 
$y(t)=\theta(t)(\xi_*+\widetilde v_1),$
hence
\beq\label{Frv}
F_r(y(t))
= F_r\big( \theta(t) (\xi_* + \widetilde v_1 ) \big)
=  \theta(t)^{\beta_r} F_r(\xi_* + \widetilde v_1).
\eeq

Let $s_*\in\N$ satisfy 
\beq \label{schoice} 
s_*\delta_1/2 + \beta_1\mu_1\geq \mu_{N+1}
\text{ and } s_*\ge \widetilde\mu_{N+1}/\widetilde\mu_2. 
\eeq 
By Taylor's expansion \eqref{Taylor} with $s=s_*$, using the notation in \eqref{Frm},   
\beq\label{Frxi}
F_r (\xi_* + \widetilde v_1)
 = \sum_{m=0}^{s_*} \mathcal F_{r,m}\widetilde v_1^{(m)}+ g_{r,s_*}(\widetilde v_1)
 =F_r(\xi_*)+\sum_{m=1}^{s_*} \mathcal F_{r,m}\widetilde v_1^{(m)}+ g_{r,s_*}(\widetilde v_1).
\eeq
It follows \eqref{Frv} and \eqref{Frxi} that
\beq\label{Fvex1}
F_r(y(t))= \theta(t)^{\beta_r}\sum_{m=0}^{s_*} \mathcal F_{r,m}\widetilde v_1(t)^{(m)}
+\theta(t)^{\beta_r} g_{r,s_*}(\widetilde v_1(t)).
\eeq 

For the last term in \eqref{Fvex1}, by using \eqref{grs} and the decay of $|\widetilde v_1(t)|$ from \eqref{util1},  we find that
\beqs
\theta(t)^{\beta_r}  g_{r,s_*}(\widetilde v_1(t))
=\zeta_*(\widehat{\LL}_{n_1}(t))^{\beta_r}\psi(t)^{-\beta_r\mu_1}  \bigo(|\widetilde v_1(t)|^{s_*+1}).
\eeqs
Utilizing \eqref{zetsdel} gives 
$$\zeta_*(\widehat{\LL}_{n_1}(t))^{\beta_r} =\bigo(\psi(t)^{\delta_1/4}).$$
Combining this estimate with \eqref{util1} and the first condition in \eqref{schoice}, we derive 
\begin{align}\label{eg}
\theta(t)^{\beta_r}  g_{r,s_*}(\widetilde v_1(t))
&=  \bigo( \psi(t)^{\delta_1/4}\psi(t)^{-\beta_r\mu_1}\psi(t)^{-\delta_1(s_*+1)/2}) \notag\\
&=  \bigo(\psi(t)^{-(\beta_1\mu_1+\delta_1s_*/2+\delta_1/4)})=\bigo ( \psi(t)^{-\mu_{N+1}-\delta_1/4}).
\end{align}

Next, we compute $\theta(t)^{\beta_r}\sum_{m=0}^{s_*} \mathcal F_{r,m}\widetilde v_1(t)^{(m)}$ in \eqref{Fvex1}.
Consider $m\ge 1$.
Since 
$$v_1=\sum_{k=2}^N y_k +v_N,\text{ we have }
 \widetilde v_1=\sum_{k=2}^N \widetilde y_k +\widetilde v_N.$$
 Hence, we can calculate
\begin{align}
\mathcal F_{r,m}\widetilde v_1^{(m)}
&=\mathcal F_{r,m}\Big(\sum_{k=2}^N \widetilde y_k +\widetilde v_N\Big)^{(m)}
=\mathcal F_{r,m}\Big(\sum_{k=2}^N \widetilde y_k +\widetilde v_N,
\sum_{k=2}^N \widetilde y_k +\widetilde v_N,\ldots,
\sum_{k=2}^N \widetilde y_k +\widetilde v_N\Big) \notag\\
&=\mathcal F_{r,m}\Big(\sum_{k=2}^N \widetilde y_k\Big)^{(m)}+\sum_{\rm finitely\ many}\mathcal F_{r,m}(h_1,\ldots,h_m). \label{u1m}
\end{align}
Note, in the case $N=1$, that the sum $\sum_{k=2}^N \widetilde y_k$ and, hence, the term  $\mathcal F_{r,m}(\sum_{k=2}^N \widetilde y_k)^{(m)} $ are not present in the calculations in \eqref{u1m}.

In the last sum of \eqref{u1m}, each $h_1,\ldots,h_m$ is either $\sum_{k=2}^N \widetilde y_k$ or $\widetilde v_N$, and at least one of these $h_j$ must be $\widetilde v_N$.
Suppose there are $\ell$ terms $\widetilde v_N$ among $h_1,\ldots,h_m$ for some integer $\ell\in[1,m]$. 
By inequality \eqref{multineq} and the estimate \eqref{Ovk2} for $\widetilde y_k$,  we have
\beqs
|\mathcal F_{r,m}(h_1(t),\ldots,h_m(t))|\le \|\mathcal F_{r,m}\| \cdot |h_1(t)| \ldots |h_m(t)| 
= \bigo(\psi(t)^{\varep(m-\ell)}|\widetilde v_N(t)|^\ell) 
\eeqs
for any $\varep>0$. Using estimate \eqref{Ountilde1} for $\widetilde v_N(t)$ and choosing $\varep$ sufficiently small if needed, we obtain
\beqs
|\mathcal F_{r,m}(h_1(t),\ldots,h_m(t))|
= \bigo(\psi(t)^{\varep(m-1)}|\widetilde v_N(t)|) 
= \bigo(\psi(t)^{\delta_N/4}\psi(t)^{-\widetilde\mu_N - \delta_N/2})= \bigo(\psi(t)^{-\widetilde\mu_N-\delta_N/4} ). 
\eeqs
Combining this estimate with \eqref{u1m}  and formula \eqref{Ovk1}, we obtain 
\begin{align*}&\sum_{m=0}^{s_*} \mathcal F_{r,m}\widetilde v_1(t)^{(m)}
=F_r(\xi_*) + \sum_{m=1}^{s_*} \mathcal F_{r,m}\Big(\sum_{k=2}^N \widetilde y_k(t)\Big)^{(m)} + \bigo(\psi(t)^{-\widetilde\mu_N-\delta_N/4} )\\
&=F_r(\xi_*) + \sum_{m=1}^{s_*} \sum_{k_1,\ldots,k_m = 2}^N\mathcal F_{r,m}(\widetilde y_{k_1}(t),\widetilde y_{k_2}(t),\ldots, \widetilde y_{k_m}(t))
+ \bigo(\psi(t)^{-\widetilde\mu_N-\delta_N/4} )\\
&=F_r(\xi_*) + \sum_{m=1}^{s_*} \sum_{k_1,\ldots,k_m = 2}^N \psi(t)^{-\sum_{j=1}^m \widetilde\mu_{k_j}}\mathcal F_{r,m}(\widetilde q_{k_1},\widetilde q_{k_2},\ldots, \widetilde q_{k_m}) \circ \Xi_{n_N}(t)
+  \bigo(\psi(t)^{-\widetilde\mu_N-\delta_N/4} ). 
\end{align*}
Thus,
\beq\label{esumF}
\begin{aligned}
& \theta(t)^{\beta_r}  \sum_{m=0}^{s_*} \mathcal F_{r,m}\widetilde v_1(t)^{(m)}\\
&=\sum_{m=0}^{s_*} \sum_{k_1,\ldots,k_m=2}^N \psi(t)^{-(\sum_{j=1}^m \widetilde\mu_{k_j}+\beta_r\mu_1)}  \zeta_*(\widehat{\LL}_{n_1}(t))^{\beta_r}  \mathcal F_{r,m}(\widetilde q_{k_1},\widetilde q_{k_2},\ldots,\widetilde  q_{k_m}) \circ \Xi_{n_N}(t)\\
&\quad + R(t), 
\end{aligned}
\eeq 
where
$R(t)=\bigo( \zeta_*(\widehat{\LL}_{n_1}(t))^{\beta_r}\psi(t)^{-\widetilde\mu_N-\beta_r\mu_1-\delta_N/4)} )$.
Again, in the case $N=1$, the last double summation in \eqref{esumF} has only one term corresponding to $m=0$, which is $F_r(\xi_*)$.

Applying estimate \eqref{zetsdel} to $s=\beta_r$ and $\delta=\delta_N/8$, we have 
$\zeta_*(\widehat{\LL}_{n_1}(t))^{\beta_r} =\bigo(\psi(t)^{\delta_N/8})$.
Then
$|R(t)|=\bigo( \psi(t)^{-(\widetilde\mu_N+\beta_r\mu_1+\delta_N/8)} )$.
Using property \eqref{muN}, we have 
\beqs 
\widetilde\mu_N + \beta_r\mu_1+ \delta_N/8=
\widetilde\mu_N + \widetilde\beta_r\mu_1+\mu_1+ \delta_N\ge \widetilde \mu_{N+1}+\mu_1+\delta_N/8=\mu_{N+1}+\delta_N/8. 
\eeqs 
Hence, the last term in \eqref{esumF} can be estimated as 
\beq\label{ee}
|R(t)|= \bigo(\psi(t)^{-\mu_{N+1} -\delta_N/8} ).
\eeq 

Therefore, by  \eqref{Fvex1}, \eqref{eg}, \eqref{esumF}, \eqref{ee}, we have 
\beq\label{Tmore}
\sum_{r=1}^{r_*} F_r (y(t))
=J(t)+\bigo(\psi(t)^{-\mu_{N+1} -\delta_N/8} )+\bigo(\psi(t)^{-\mu_{N+1}-\delta_1/4} )
=J(t)+\bigo(\psi(t)^{-\mu_{N+1}-\varep_1} ),
\eeq
where $\varep_1=\min\{\delta_N/8,\delta_1/4\}>0$, and 
\beq\label{Jdef}
J(t)=\sum_{r=1}^{r_*}\sum_{m=0}^{s_*} \sum_{k_1,\ldots,k_m =2}^N \psi(t)^{-(\sum_{j=1}^m \widetilde\mu_{k_j}+\beta_r\mu_1)}  \zeta_*(\widehat{\LL}_{n_1}(t))^{\beta_r} \mathcal F_{r,m}(\widetilde q_{k_1},\widetilde q_{k_2},\ldots, \widetilde q_{k_m})\circ \Xi_{n_N}(t). 
\eeq

Regarding the power of $\psi(t)^{-1}$ in \eqref{Jdef}, 
denote $\widetilde\mu= \widetilde\mu_{k_1}+\ldots +\widetilde\mu_{k_m}+\widetilde\beta_r\mu_1 $. 

When $m=0$, one has $\widetilde\mu=\widetilde\beta_r\mu_1$, which belongs to  $\widetilde{\mathcal S}$.
When $m\ge 1$, by property \eqref{Sprop}, $\widetilde\mu$ also belongs to $\widetilde{\mathcal S}$.
Clearly, $\widetilde\mu>0=\widetilde \mu_1$.
Thus, in both cases of $m$,  there is a unique integer $p\ge 2$ such that
\beq \label{mumu}
    \widetilde\mu=\widetilde\mu_p.
\eeq
Because of the indices $r,m,k_1,\ldots,k_m$ being finitely many, there are only finitely many such numbers $p$. Thus, there is  $p_*\in\N$ such that any index $p$ in \eqref{mumu} must satisfy $p\le p_*$.
Hence, the exponent of $\psi(t)^{-1}$ in \eqref{Jdef} is
\beq \label{mul}
\widetilde\mu+\mu_1=\widetilde\mu_p+\mu_1=\mu_p\in\mathcal S  \quad \text{ for some integer }  p\in[2,p_*].
\eeq

Using the index $p$ in \eqref{mul}, we can split the sum in $J(t)$ into two parts corresponding to $p \le N+1$ and $p\ge N+2$.
We then write accordingly  $$J(t)=S_1(t)+S_2(t),$$
where 
\begin{align*}
S_1(t)&=\sum_{p=2}^{N+1}\sum_{r=1}^{r_*}\sum_{m=0}^{s_*}  \sum_{\substack{2\le k_1,\ldots,k_m\le N,\\  \sum_{j=1}^m \widetilde\mu_{k_j}+\beta_r\mu_1=\mu_p}}
\psi(t)^{-\mu_p } \zeta_*(\widehat{\LL}_{n_1}(t))^{\beta_r}  \mathcal F_{r,m}(\widetilde q_{k_1},\widetilde q_{k_2},\ldots, \widetilde q_{k_m})\circ \Xi_{n_N}(t),\\
S_2(t)&=\sum_{p=N+2}^{p_*}\sum_{r=1}^{r_*}\sum_{m=0}^{s_*} \sum_{\substack{2\le k_1,\ldots,k_m\le N, \\ \sum_{j=1}^m \widetilde\mu_{k_j}+\beta_r\mu_1=\mu_p}} \psi(t)^{-\mu_p } \zeta_*(\widehat{\LL}_{n_1}(t))^{\beta_r}  \mathcal F_{r,m}(\widetilde q_{k_1},\widetilde q_{k_2},\ldots, \widetilde q_{k_m})\circ \Xi_{n_N}(t).
\end{align*}

Defining 
\beq\label{Jk}
J_k(t)=\psi(t)^{-\mu_k }  \sum_{r=1}^{r_*} \sum_{m=0}^{s_*} \sum_{\substack{2\le k_1,\ldots,k_{m} \le N,\\ \sum_{j=1}^m \widetilde\mu_{k_j}+\beta_r\mu_1= \mu_k}}  \zeta_*(\widehat{\LL}_{n_1}(t))^{\beta_r}  \mathcal F_{r,m}(\widetilde q_{k_1},\widetilde q_{k_2},\ldots, \widetilde q_{k_{m}})\circ \Xi_{n_N}(t) 
\eeq
for $k=2,\dots,N+1$, we re-write 
\beq \label{S1f}
    S_1(t)=\sum_{k=2}^{N+1}J_k(t).
\eeq

We estimate $S_2(t)$ next. Set $\varep_2=\min\{\varep_0,\varep_1,\mu_1\varep_*/2,(\mu_{N+2}-\mu_{N+1})/2\}>0$.
Utilizing inequality \eqref{zetsdel} to estimate 
$$\zeta_*(\widehat{\LL}_{n_1}(t))^{\beta_r} =\bigo(\psi(t)^{\varep_2/2}).$$
Using inequality \eqref{multineq} to estimate $|\mathcal F_{r,m}(\widetilde q_{k_1},\widetilde q_{k_2},\ldots, \widetilde q_{k_m})\circ \Xi_{n_N}(t)|$, and applying inequality \eqref{qtilest} to $\widetilde q_{k_j}\circ \Xi_{n_N}(t)$, we have 
\begin{align*}
|\mathcal F_{r,m}(\widetilde q_{k_1},\widetilde q_{k_2},\ldots, \widetilde q_{k_m})\circ \Xi_{n_N}(t)|
&\le \|\mathcal F_{r,m}\| \cdot|\widetilde q_{k_1}\circ \Xi_{n_N}(t)|\cdot |\widetilde q_{k_2}\circ \Xi_{n_N}(t)|\ldots |\widetilde q_{k_m}\circ \Xi_{n_N}(t)|  \\
&=\bigo(\psi(t)^{\varep_2/2}).
\end{align*}
For the term $\psi(t)^{-\mu_p}$ in the formula of $S_2(t)$, we use the fact $\mu_p\ge \mu_{N+2}$. Combining the above estimates, we obtain 
\beq\label{S2f}
S_2(t)=\bigo( \psi(t)^{-\mu_{N+2}}\psi(t)^{\varep_2/2}\psi(t)^{\varep_2/2})=\bigo( \psi(t)^{-\mu_{N+2}+\varep_2})
=\bigo( \psi(t)^{-\mu_{N+1}-\varep_2}).
\eeq 

It follows \eqref{S1f} and \eqref{S2f} that
\beq\label{Jest}
J(t)=\sum_{k=2}^{N+1}J_k(t)+\bigo( \psi(t)^{-\mu_{N+1}-\varep_2}).
\eeq
Combining \eqref{Tmore} with \eqref{Jest} gives
\beq\label{T1e}
\sum_{r=1}^{r_*} F_r (y(t))=\sum_{k=2}^{N+1}J_k(t)+\bigo( \psi(t)^{-\mu_{N+1}-\varep_2 }).
\eeq

Thus, by \eqref{eqN}, \eqref{Fy} and \eqref{T1e}, we obtain
\beq\label{vpr}
v_N'+A v_N
=\sum_{k=2}^{N+1}J_k(t)-\sum_{k=1}^N (A y_k+y_k') +\sum_{k=1}^{N+1}f_k(t)
+\bigo( \psi(t)^{-\mu_{N+1}-\varep_2}).
\eeq

By formula \eqref{dqze0},  it holds, for $k\in\N$, that
\beq \label{ykeq2}    
y_k'=\ddt q_k(\Xi_{n_k}(t))
=(\mathcal M_{-1}q_k+\mathcal R q_k+ \mathcal R\zeta_*\frac{\partial q_k}{\partial\zeta})\circ \Xi_{n_k}(t).
\eeq  
Summing up \eqref{ykeq2} in $k$ gives
\beq\label{sumypr}
\sum_{k=1}^N y_k'=\sum_{k=1}^N \mathcal M_{-1}q_k\circ \Xi_{n_k}(t) + \sum_{\lambda=1}^N (\mathcal R q_\lambda+ \mathcal R\zeta_*\frac{\partial q_\lambda}{\partial\zeta})\circ \Xi_{n_\lambda}(t).
\eeq
Note that we already made a change of the index notation from $k$ to $\lambda$ for the last sum.

Regarding  the last sum in \eqref{sumypr},
we observe that $\mathcal Rq_\lambda\in \mathscr F_{0}(n_\lambda,-\mu_\lambda-1,\C^n)$. Thanks to remark \ref{rme} after Definition \ref{SS1}, we have $\mu_\lambda+1\in\mathcal S$. Hence, there exists a unique number $k\in\N$ such that $\mu_k=\mu_\lambda+1$. Because $\mu_k>\mu_\lambda$, we have $\lambda\le k-1$. Thus, 
$$\mathcal R q_\lambda+ \mathcal R\zeta_*\frac{\partial q_\lambda}{\partial\zeta}=\chi_k.$$
Considering three possibilities $k\le N$, $k=N+1$ and $k\ge N+2$, we  rewrite, similar to \eqref{Jest},
\beq\label{sumR}
\sum_{\lambda=1}^N (\mathcal R q_\lambda+ \mathcal R\zeta_*\frac{\partial q_\lambda}{\partial\zeta})\circ \Xi_{n_\lambda}(t)
=\sum_{k=1}^N \chi_k\circ \Xi_{n_k}(t) +\chi_{N+1}\circ \Xi_{n_{N+1}}(t)+\bigo(\psi(t)^{-\mu_{N+1}-\varep_3})
\eeq
for some number $\varep_3\in(0,\varep_2]$. Therefore,
\beq\label{shorty}
\sum_{k=1}^N y_k'=\sum_{k=1}^N (\mathcal M_{-1}q_k+\chi_k)\circ \Xi_{n_k}(t) +\chi_{N+1}\circ \Xi_{n_{N+1}}(t) +\bigo(\psi(t)^{-\mu_{N+1}-\varep_3}).
\eeq

Combining \eqref{vpr} and \eqref{shorty} yields 
\beq\label{vN3}
v_N'+Av_N
= f_{N+1}(t) - \sum_{k=1}^N X_k(t) - \chi_{N+1}\circ \Xi_{n_{N+1}}(t)+  J_{N+1}(t) +\bigo(\psi(t)^{-\mu_{N+1}-\varep_3}),
\eeq 
 where
\beqs
X_1(t)= (Aq_1+\mathcal M_{-1}q_1 + \chi_1-p_1)\circ \Xi_{n_1}(t),
\eeqs
\beqs
X_k(t)= (Aq_k+\mathcal M_{-1}q_k + \chi_k-p_k)\circ \Xi_{n_k}(t)- J_k(t) \text{ for } 2\le k\le N.
\eeqs

We already know $\chi_1=0$ and $(A+\mathcal M_{-1})q_1=(A+\mathcal M_{-1})\mathcal Z_Ap_1=p_1$. Hence, 
\beq \label{X1z}
    X_1=0.
\eeq

Consider $k\ge 2$.  
Note that the condition $\sum_{j=1}^m \widetilde\mu_{k_j}+\beta_r\mu_1= \mu_k$ in formula \eqref{Jk} of $J_k(t)$ is exactly \eqref{muequi}.
Then, for each $k=2,\ldots,N+1$, we utilize the identity \eqref{Qqtil} for $k\le N+1$, $r^*=r_*$, $s^*=s_*$ and $k^*=N$, noticing that condition \eqref{sumcond} is met thanks to the condition for $\widetilde\beta_{r_*}$ in \eqref{pchoice}, the second condition for $s_*$ in \eqref{schoice}, and the fact $k^*=N\ge k-1$.
It results in 
\beqs
\mathcal Q_k(\Xi_{n_k}(t))=J_k(t) \text{ for $k=2,\ldots,N+1$ .}
\eeqs

By the identity \eqref{ZAM}, we can write
\beqs
\chi_k=(A+\mathcal M_{-1})\mathcal Z_A\chi_k,\
p_k=(A+\mathcal M_{-1})\mathcal Z_A p_k,\
J_k=((A+\mathcal M_{-1})\mathcal Z_A\mathcal Q_k)\circ \Xi_{n_k}.
\eeqs
Therefore,
\beqs
X_k(t)=\Big [(A+\mathcal M_{-1})( q_k + \mathcal Z_A(\chi_k-p_k-\mathcal Q_k))\Big] \circ \Xi_{n_k}(t).
\eeqs 
For $2\le k\le N$, one has from \eqref{qkd} that
$q_k + \mathcal Z_A(\chi_k-p_k-\mathcal Q_k)=0$,
hence, 
\beq \label{Xkz}
    X_k=0.
\eeq

Combining equation \eqref{vN3} with \eqref{X1z} and \eqref{Xkz} gives
\beq\label{vN5}
v_N'+Av_N 
=(p_{N+1}- \chi_{N+1}+ \mathcal Q_{N+1})\circ \Xi_{n_{N+1}}(t) 
 +\bigo(\psi(t)^{-\mu_{N+1}-\varep_3} ).
\eeq
Applying Theorem \ref{iterlog} to equation \eqref{vN5} yields
\beqs
|v_N(t)-(\mathcal Z_A ( \mathcal Q_{N+1} + p_{N+1} -\chi_{N+1}))\circ \Xi_{n_{N+1}}(t)|=\bigo(\psi(t)^{-\mu_{N+1}-\delta_{N+1}})
\eeqs
for some number $\delta_{N+1}>0$.
Note that 
\beqs
(\mathcal Z_A ( \mathcal Q_{N+1} + p_{N+1} -\chi_{N+1}))\circ \Xi_{n_{N+1}}=q_{N+1}\circ \Xi_{n_{N+1}}=y_{N+1}.
\eeqs
Therefore,
\beqs
|v_N(t)-y_{N+1}(t)|=\bigo(\psi(t)^{-\mu_{N+1}-\delta_{N+1}}),
\eeqs
which implies 
\beqs
\left|y(t)-\sum_{k=1}^{N+1} y_k(t)\right|=\bigo(\psi(t)^{-\mu_{N+1}-\delta_{N+1}}).
\eeqs
Thus, \eqref{inhypo} is true for $N:=N+1$.
Hence the statement  $(\mathcal T_{N+1})$ holds true.

\medskip\textbf{Conclusion for Case A.} 
By the Induction Principle, the statement  \eqref{inhypo}  is true for all $N\in\N$. 
Therefore, we obtain the asymptotic expansion \eqref{solnxp}.

\medskip
\textit{Case  B: Assume \eqref{Gef}.} 
We follow the  proof in Case A with the following adjustments.
The number $r_*$ is simply $N_*$, and condition \eqref{pchoice} for $r_*$  is not required anymore. All the sum $\sum_{r\ge 1}$ appearing in the proof that involves $F_r$ or $\mathcal F_{r,m}$  will be replaced with $\sum_{1\le r\le N_*}$. From \eqref{Fcut} to the end of the proof in Case A, positive number $\varep_*$ is arbitrary, and number $\beta_{r_*}$ in calculations from \eqref{Fcut} to the end is replaced with any number $\beta_*\ge \mu_{N+1}/\mu_1$. Then \eqref{Fcut} still holds true thanks to \eqref{errF2}.
We also take into account that $\mathcal Q_k$ now is given by \eqref{fsf} under condition \eqref{scf}.
With these changes, the above proof for Case A goes through, and we obtain the desired statement for this case B.

This ends Part I of the proof for the case $m_*=0$.

\medskip    
\noindent Part II. Consider $m_*\ge 1$. We follow the proof of Part I. 
By the virtue of identity \eqref{dqze2},  it holds, for  $k\in\N$, that
\beq \label{ykeq3}    
y_k'(t)=\mathcal M_{-1} q_k(\Xi_{n_k}(t))+\bigo(t^{-\gamma}) \quad\forall\gamma\in(0,1).
\eeq  
In \eqref{sumypr}, by using \eqref{ykeq3} instead of \eqref{ykeq2}, we replace
$$\sum_{\lambda=1}^N (\mathcal R q_\lambda+ \mathcal R\zeta_*\frac{\partial q_\lambda}{\partial\zeta})\circ \Xi_{n_\lambda}(t)
\text{ with }\bigo(t^{-\gamma}).$$ 
Because a function of $\bigo(t^{-\gamma})$  is also of $\bigo(\psi(t)^{-\mu_{N+1}-\delta})$ for any $\delta>0$, then after neglecting \eqref{sumR}, all terms $\chi_k$ for $1\le k\le N+1$ in calculations thereafter can be taken to be $0$.
Thus, the proof can be proceeded as in Part I. 
 The proof of Theorem \ref{mainthm2} is now complete.
\end{proof}

\begin{remark}The following are remarks on Theorem \ref{mainthm2}.
\begin{enumerate}[label=\rnum]
    \item     Since $q_1(\widehat{\LL}_{n_1}(t))=z_{m_*}^{-\mu_1}\zeta \xi_*|_{(z,\zeta)=\Xi_{n_1}(t)}$, we can alternatively take in Definition \ref{qcdef} 
    $$q_1=q_1(z,\zeta)=z_m^{-\mu_1}\zeta \xi_*.$$

    \item If the function $\zeta_*$ and all $p_k$ in \eqref{fas1} are restricted to having only real power vectors $\alpha$ for the variable $z$, then the functions $q_k$ in the asymptotic  expansion  \eqref{solnxp} that are constructed by Definition  \ref{qcdef} also have only real power vectors $\alpha$ for the variable $z$,.

    \item Suppose $p_0(z)=z^\alpha$ for a real power vector $\alpha$.
    Then we only need to find $q_k=q_k(z) \in \mathscr P_m(n_k,-\mu_k,\C^n,\R^n)$.
    Indeed, with the substitution $\zeta=z^\alpha$, formula \eqref{qtil} gives  $\tilde q_k=\tilde q_k(z)$ and formula \eqref{Qqtil} gives $\mathcal Q_k=\mathcal Q_k(z)$, both functions belong to $\mathscr P_m(n_k,-\mu_k,\C^n,\R^n)$. Hence, there is no need to use the variable $\zeta$. In fact, in this case, the asymptotic expansion \eqref{solnxp} can be achieved with $q_k=q_k(z)$ by solely using the techniques previously developed  in \cite{CaHK1,H5}.

    \item In the proof of Theorem \ref{mainthm2}, we only need the Taylor series for each function $F_k$ 
    around $\xi_*=A^{-1}\xi_0\ne 0$. Therefore, the condition $F_k\in C^\infty(\R^n\setminus\{0\})$ in Assumption \ref{assumpG}\ref{GG} can be replaced with $F_k$ being a $C^\infty$-function in a neighborhood of $\xi_*$.
\end{enumerate}
\end{remark}

\begin{remark}
Our methodology can also be applied to the cases that either
\begin{itemize}
    \item the function $F(x)$ has a finite sum approximation, as $x\to 0$,  in the sense of \eqref{errF2} with \emph{some} number $\beta>\beta_{N_*}$, or 
    \item the forcing function $f(t)$ has a finite sum asymptotic approximation, as $t\to\infty$, in the sense of \eqref{gremain2} with \emph{some} number $\mu>\gamma_N$.  
\end{itemize}
In these situations, one can follow the steps in this section, especially the proof of Theorem \ref{mainthm2}, 
to find a finite sum asymptotic approximation   for the solutions $y(t)$. (See some details in  \cite[Theorem 4.1]{CaH1} and \cite[Theorem 5.1]{CaHK1}.)
\end{remark}

\begin{example}\label{examples}
Let the matrix $A$ be as in Assumption \ref{assumpA}. Suppose
$$F(x)=|x|^{3/2}Z_0 + 4|x|^{3/4}x,\text{ for some constant vector }
Z_0\in \R^n\setminus\{0\}.$$

\begin{enumerate}

\item Assume
$$f(t)=\left ( (\ln t)^{1/2} - (\ln\ln t)^{-3} +1\right )t^{-1} \xi_0+ (\ln t)^{1/3}t^{-3/2}\xi_1\text{ with } 
\xi_0,\xi_1\in \R^n\setminus\{0\}.$$
Let $m_*=0$, $\widetilde n_k=n_k=2$ and  $\zeta_*(z_{-1},z_0,z_1,z_2)=p_0(z_{-1},z_0,z_1,z_2)=z_1^{1/2}-z_2^{-3}+1$.
Set 
$\zeta(t)=\zeta_*(\widehat \LL_2(t))= (\ln t)^{1/2} - (\ln\ln t)^{-3} +1$. Then any decaying solution $y(t)$ of \eqref{mainode} has the following  asymptotic expansion, as $t\to\infty$, written in the form \eqref{clearpower} 
\beqs
y(t)\sim \sum_{k=1}^\infty q_k( \widehat \LL_2(t),\zeta(t)) t^{-\mu_k},\text{ where } q_k\in \widehat{\mathscr P}_0(2,0,\C^n,\R^n)
\eeqs

\item Assume
$$f(t)=\left ( -3(\ln\ln t)^{1/2}L_4(t)^{-3} + 2L_3(t)^{-3/2}\right)(\ln t)^{-2} \xi_0+ L_4(t)^{3/5}(\ln t)^{-5}\xi_1 $$
 with $\xi_0,\xi_1\in \R^n\setminus\{0\}$. Let $m_*=1$, $\widetilde n_k=n_k=4$ and, for $z=(z_{-1},z_0,z_1,z_2,z_3,z_4)$, 
$$\zeta_*(z)=p_0(z)=3z_2^{1/2}z_4^{-3}-2z_3^{-3/2}.$$
Set $\zeta(t)=\zeta_*(\widehat \LL_4(t)) = 3(\ln\ln t)^{1/2}L_4(t)^{-3} - 2L_3(t)^{-3/2}$. Then any decaying solution $y(t)$ of \eqref{mainode} has  the following asymptotic expansion as $t\to\infty$, written in the form \eqref{expan5}
\beqs
y(t)\sim \sum_{k=1}^\infty q_k( \widehat \LL_4(t),\zeta(t))(\ln t)^{-\mu_k},\text{ where } q_k\in \widehat{\mathscr P}_1(4,0,\C^n,\R^n).
\eeqs
\end{enumerate}

\end{example}

\section{Statements with explicit sinusoidal functions}\label{sinsec}

The purpose of this section is to rephrase Theorem \ref{mainthm} using only real numbers. This seemingly simple task, however, involves some technicalities that require rigorous treatments.
First, we characterize the classes ${\mathscr P}_{m}(k,0,\C^n,\R^n)$ and $\widehat{\mathscr P}_{m}(k,0,\C^n,\R^n)$  more directly using real number sinusoidal functions instead of complex number power functions.

\begin{definition}\label{sinusclass}
Let $X$ be a real linear space.
Given integers $k\ge m\ge 0$.
\begin{enumerate}[label=\tnum]
    \item Define the class $\widehat{\mathcal P}_m^1(k,X)$ to be the collection of functions $p(z,\zeta)$ which are the finite sums of the following functions
\beq\label{realpz}
(z,\zeta)\in (0,\infty)^{k+2}\times(0,\infty)\mapsto z^\alpha \zeta^\beta \prod_{j=0}^k \sigma_j(\omega_j z_j)\xi,
\eeq
where $\xi\in X$, $z=(z_{-1},z_0,\ldots,z_k)$,
 $\alpha=(\alpha_{-1},\alpha_0,\ldots,\alpha_k)\in \mathcal E_\R(m,k,0)$, $\beta\in\R$,
$\omega_j\in\R$,  
and, for each $j$,  the function  $\sigma_j$ is either cosine or sine.

\item Define the class $\widehat{\mathcal P}_m^0(k,X)$ to be the subset of $\widehat{\mathcal P}_m^1(k,X)$ when all numbers $\omega_j$ in \eqref{realpz} are zero. 

\item Define $\mathcal P_m^1(k,X)$, respectively, $\mathcal P_m^0(k,X)$, to be the set of functions $p=p(z)$  in $\widehat{\mathcal P}_m^1(k,X)$, respectively, $\widehat{\mathcal P}_m^0(k,X)$, that is, $\zeta$ and $\beta$ are omitted in \eqref{realpz}. 
\end{enumerate}
\end{definition}

Note in \eqref{realpz} that the components of the power vector $\alpha$ satisfy 
$$\alpha_{-1}=\alpha_0=\ldots=\alpha_m=0.$$
Consequently, $z_{-1}^{\alpha_{-1}}=1$ in \eqref{realpz}.

The next class of functions is a counter part of that in Definition \ref{Pplus}.

\begin{definition}\label{sinplus}
Given integers $k\ge m\ge -1$.
Let $\mathcal P_m^{+}(k)$ be the set of functions $p\in \mathcal P_m^1(k,\R)$ such that
$$p(z)=p_{\max}(z)+q(z),$$
where $p_{\max}\in \mathcal P_m^0(k,\R)$ is of the form 
$$p_{\max}(z)=z^{\alpha_{\max}} c_{\max} \text{ with } \alpha_{\max}\in \mathcal E_\R(m,k,0)\text{ and }  c_{\max}>0,$$
and $q\in \mathcal P_m^1(k,\R)$  is a finite sum of functions
\beq \label{qsmall}
   z\in(0,\infty)^{k+2}\mapsto z^\alpha  \prod_{j=0}^k \sigma_j(\omega_j z_j)\xi \text{ similar to \eqref{realpz}, } 
\eeq
with $\alpha<\alpha_{\max}$ and $\omega_0=0$.
\end{definition}

We observe that the facts in Remarks (a)--(c) after Definition \ref{Pplus} still hold true when 
$ {\mathscr P}^+(k)$ and $ {\mathscr P}_m^+(k)$  are replaced with ${\mathcal P}_m^+(k)$.
Also, thanks to the condition $\alpha\in \mathcal E_\R(m,k,0)$ for \eqref{realpz}, if $p\in {\mathcal P}_m^1(k,\R^n)$, $q\in \widehat{\mathcal P}_m^1(k,\R^n)$ and $\zeta_\bullet\in {\mathcal P}_m^+(k)$, then one has, similar to estimate \eqref{phates}, 
\beq\label{samees}   
 \left|p(\widehat\LL_{k}(t))\right|=\bigo(\iln_{m}(t)^{s})\text{ and } 
   \left|  q(\widehat\LL_{k}(t),\zeta_\bullet(\widehat\LL_{k}(t)))\right|
   =\bigo(\iln_{m}(t)^{s}) \text{ for any }s>0.
\eeq

With these observations, we can define the asymptotic expansions involving functions in Definitions \ref{sinusclass} and \ref{sinplus} as the following.

\begin{definition}\label{realxp}
Let $(X,\|\cdot\|_X)$ be a real normed space.
Let $m_*\in \Z_+$ and $g(t)$ be a function from  $(T,\infty)$ to $X$,  for some $T\in\R$. 
\begin{enumerate}[label=\tnum]
\item  The asymptotic expansions 
\beqs
g(t)\sim \sum_{k=1}^\infty  g_k(\widehat\LL_{n_k}(t))\iln_{m_*}(t)^{-\gamma_k}, 
\text{ where $ g_k\in \mathcal P_{m_*}^1(n_k,\R^n) $ for $k\in\N$, }
\eeqs
and  
\beqs
g(t)\sim \sum_{k=1}^N g_k(\widehat{\LL}_{n_*}(t))\iln_{m_*}(t)^{-\gamma_k}, \text{ where $g_k\in \mathcal P_{m_*}^1(n_*,X)$ for $1\le k\le N$, }
\eeqs
are defined in the same ways as  Definition \ref{Lexpand}, part \ref{LE1} and \ref{LE2}, repectively.

\item Let $\zeta_\bullet\in \mathcal P_{m_*}^{+}(n_0)$ for some $n_0\ge m_*$, and denote 
\beq \label{newZbar}
    \Xi_k(t)=\left (\widehat{\LL}_{k}(t),\zeta_\bullet(\widehat{\LL}_{n_0}(t))\right ).
\eeq
Then the asymptotic expansions
\beqs
g(t)\sim \sum_{k=1}^\infty  g_k(\Xi_{n_k}(t))\iln_{m_*}(t)^{-\gamma_k}, 
\text{ where $ g_k\in \widehat{\mathcal P}_{m_*}^1(n_k,\R^n) $ for $k\in\N$, }
\eeqs
and
\beqs
g(t)\sim \sum_{k=1}^N g_k(\Xi_{n_*}(t))\iln_{m_*}(t)^{-\gamma_k}, \text{ where $g_k\in \widehat{\mathcal P}_{m_*}(n_*,X)$ for $1\le k\le N$, }
\eeqs 
are defined in the same ways as Definition \ref{zetxp}, part \ref{ze1} and  \ref{ze2}, respectively.
\end{enumerate}
\end{definition}

The meaning of the asymptotic expansions in  Definition \ref{realxp} can be justified by the using the estimates in \eqref{samees}.
We can now  restate Theorem \ref{mainthm} using these forms of  asymptotic expansions.

\begin{theorem}\label{thmreal}
Let Assumptions \ref{assumpA} and \ref{assumpG} hold true.
Let $m_*\in\Z_+$ be given and assume the following additionally.
\begin{enumerate}[label=\tnum]
    \item The function  $f\in C([T,\infty),\R^n)$, for some $T\ge 0$, has the asymptotic expansion, in the sense of Definition \ref{realxp}{\rm (i)},
\beq\label{freal1}
f(t)\sim \sum_{k=1}^\infty g_k(\widehat \LL_{\widetilde n_k}(t))\iln_{m_*}(t)^{-\gamma_k}, 
\text{ where } g_k\in \mathcal P_{m_*}^1(\widetilde n_k,\R^n) \text{ for }k\in\N,
\eeq
or
\beq\label{freal2}
f(t)\sim \sum_{k=1}^K g_k(\widehat{\LL}_{n_*}(t))\iln_{m_*}(t)^{-\gamma_k}, 
\text{ where } g_k\in \mathcal P_{m_*}^1(n_*,\R^n) \text{ for }1\le k\le K.
\eeq

\item There exist a function $g_0\in \mathcal P_{m_*}^+(n_1)$ and a vector $\xi_0\in\R^n\setminus\{0\}$ such that
\beqs 
    g_1(z)=g_0(z)\xi_0,
\eeqs
where $n_1$ is $\widetilde n_1$ in the case \eqref{freal1}, and  is $n_*$ in the case \eqref{freal2}.
\end{enumerate}

Then there exist a divergent, strictly increasing sequence $(\mu_k)_{k=1}^\infty\subset (0,\infty)$,
an increasing sequence $(n_k)_{k=0}^\infty\subset \Z_+\cap[m_*,\infty)$, a function
$\zeta_\bullet\in \mathcal P_{m_*}^+(n_0)$,
and functions
 $ h_k\in \widehat{\mathcal P}_{m_*}^1(n_k,\R^n)$  for all $k\in\N$,
  such that any solution  $y(t)\in\R^n$ of \eqref{mainode} as in \eqref{soln}  and \eqref{decay} admits the asymptotic expansion  
 \beq\label{yreal2}
y(t)\sim \sum_{k=1}^\infty h_k\left(\widehat{\LL}_{n_k}(t),\zeta_\bullet(\widehat{\LL}_{n_0}(t))\right)\iln_{m_*}(t)^{-\mu_k}
 \text{ in the sense of Definition \ref{realxp}{\rm (ii)}.}
\eeq    
\end{theorem}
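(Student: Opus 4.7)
The strategy is to reduce Theorem \ref{thmreal} to the already-established Theorem \ref{mainthm2} by means of a two-sided dictionary (Lemma \ref{convert}) between the real sinusoidal classes $\mathcal P_{m_*}^1$, $\widehat{\mathcal P}_{m_*}^1$, $\mathcal P_{m_*}^+$ of Definitions \ref{sinusclass} and \ref{sinplus} and the complex power classes $\mathscr P_{m_*}(\cdot,\cdot,\C^n,\R^n)$, $\widehat{\mathscr P}_{m_*}(\cdot,\cdot,\C^n,\R^n)$, $\mathscr P_{m_*}^+$ of Definitions \ref{Fclass}, \ref{Fsubo}, \ref{realF}, \ref{Pplus}. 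The three steps are: (a) convert the real hypotheses on $f$ and $g_0$ into the complex hypotheses \eqref{fas1} (or \eqref{fas2}) and \eqref{pocond}; (b) apply Theorem \ref{mainthm2} to obtain the complex expansion \eqref{solnxp}; (c) convert the resulting complex $\zeta_*$ and $q_k$ back into real sinusoidal $\zeta_\bullet$ and $h_k$ to obtain \eqref{yreal2}.

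The dictionary rests on Euler's identity $\iln_{j-1}(t)^{i\omega} =\cos(\omega\iln_j(t))+i\sin(\omega\iln_j(t))$ for $j\ge 0$. In the real-to-complex direction, a generator $z^\alpha\prod_{j=0}^k \sigma_j(\omega_j z_j)\xi$ of $\mathcal P_{m_*}^1(k,\R^n)$ is rewritten, via $\cos(\omega s)=\tfrac12(e^{i\omega s}+e^{-i\omega s})$ and $\sin(\omega s)=\tfrac1{2i}(e^{i\omega s}-e^{-i\omega s})$, as a conjugate-paired finite sum of complex monomials in $\mathscr P_{m_*}(k,0,\C^n,\R^n)$; the parallel statement holds for the $\widehat{\,\cdot\,}$ classes with the already-real $\zeta^\beta$ factor carried through untouched. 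Crucially, each $\pm i\omega_j$ contributes only to $\Im(\alpha'_{j-1})$ for $j\in\{0,\dots,k\}$, so the top-level component $\alpha'_k$ remains real. In the complex-to-real direction, any $p\in\widehat{\mathscr P}_{m_*}(k,\mu,\C^n,\R^n)$ satisfying $\Im(\alpha_k)=0$ on every summand is rewritten via \eqref{phalf}, and Euler's formula then expresses $p(\widehat\LL_k(t),\zeta)$ as $h(\widehat\LL_k(t),\zeta)\iln_{m_*}(t)^\mu$ for some $h\in\widehat{\mathcal P}_{m_*}^1(k,\R^n)$. The $+$ subclasses correspond under the dictionary: $g_0\in\mathcal P_{m_*}^+(n_1)$, with its positive real leading term $c_{\max}z^{\alpha_{\max}}$ and the restriction $\omega_0=0$ on subleading terms, maps to $p_0\in\mathscr P_{m_*}^+(n_1)$, where $\alpha_{-1}=0$ and the leading coefficient is real and positive, and conversely.

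With this dictionary, step (a) is immediate: \eqref{freal1}/\eqref{freal2} becomes \eqref{fas1}/\eqref{fas2} after setting $p_k=\widetilde g_k\,z_{m_*}^{-\gamma_k}$ with $\widetilde g_k$ the complex counterpart of $g_k$ (using the equivalence \eqref{qpequiv}), and $g_1=g_0\xi_0$ becomes \eqref{pocond}. Step (b) invokes Theorem \ref{mainthm2} directly. The main obstacle lies in step (c): one must verify that the complex output of Theorem \ref{mainthm2} satisfies the hypothesis $\Im(\alpha_{n_k})=0$ needed to invert the dictionary, so that no spurious $\iln_{n_k+1}$ is introduced. This is the content of the "no extra variables" phenomenon and will be established by induction on $k$ using Definition \ref{qcdef}: the base case holds since $p_1$ and $\zeta_*$ come from real sinusoidal data and therefore carry no top-level imaginary part; in the induction step, the operators $\mathcal M_{-1}$, $\mathcal R$, and $\mathcal Z_A$ preserve individual power vectors and hence preserve the property, the complexified multilinear maps $\mathcal F_{r,m,\C}$ satisfy the conjugation identity \eqref{MCbar} so their action in \eqref{Qkdef} produces no new top-level imaginary part, and $\chi_k$ is formed from already controlled data. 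Once this induction is complete, applying Lemma \ref{convert} in the complex-to-real direction to \eqref{solnxp} yields $\zeta_\bullet\in\mathcal P_{m_*}^+(n_0)$ and $h_k\in\widehat{\mathcal P}_{m_*}^1(n_k,\R^n)$ realizing \eqref{yreal2} in the sense of Definition \ref{realxp}(ii), completing the proof.
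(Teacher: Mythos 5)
Your overall strategy matches the paper's: convert the real sinusoidal hypotheses to the complex-power hypotheses via the dictionary in Lemma \ref{convert}, apply the complex result (Theorem \ref{mainthm}/\ref{mainthm2}), and convert back. Steps (a) and (b) track the paper's proof closely, including the identification $\zeta_\bullet=g_0$ so that $\Xi_k(t)$ from \eqref{Zbar} coincides with \eqref{newZbar}.

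Where you diverge is step (c), and there you have introduced a complication that is not required. You assert that one ``must verify'' $\Im(\alpha_{n_k})=0$ in order to invert the dictionary and avoid a spurious $\iln_{n_k+1}$. That premise is incorrect: the theorem statement only asks for \emph{some} increasing sequence $(n_k)$, not the same one as in the hypotheses, and Lemma \ref{convert}\ref{cv1} and \ref{cv3} already give a two-sided dictionary unconditionally, at the modest cost of passing from $k$ to $k+1$. The paper's proof therefore simply writes ``the number $n_k$ in \eqref{yreal2} is adjusted by adding $1$ to $n_k$ in \eqref{solnxp}'' and is done. Your step (c) aims at a genuinely stronger statement --- that the $n_k$ never need to be incremented --- which is closer in spirit to Remark \ref{noextra}, but the paper establishes that refinement only in the special case $\tilde n_k\equiv n_*$, and in the general case you would have to carry out the induction in full (tracking the embeddings $\widehat{\mathscr P}(\widetilde n_j,\C^n)\hookrightarrow\widehat{\mathscr P}(n_k,\C^n)$ carefully so the new top component is identically zero). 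Your sketch of that induction is plausible, but the reason you give for $\mathcal F_{r,m,\C}$ preserving the property is off: it is the additivity of power vectors under products of monomials (not the conjugation identity \eqref{MCbar}) that keeps the top-level imaginary part zero. In short, the argument is salvageable but unnecessary; the intended proof is shorter because it never needs step (c) at all.
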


The proof of  Theorem \ref{thmreal} will use the following conversions between the functions in 
subsections \ref{oldex}, \ref{newex} and this section \ref{sinsec}.

\begin{lemma}\label{convert}
Given integers $k\ge m\ge 0$. 
\begin{enumerate}[label=\tnum]

\item\label{cv2} If $p\in {\mathcal P}_{m}^1(k,\R^n) $ then
\beq\label{PPR2}
p\circ \widehat{\LL}_k = q\circ \widehat{\LL}_k \text{ for some } q\in {\mathscr P}_{m}(k,0,\C^n,\R^n).
\eeq
In fact, the function $q(z)$ is of the form
\beq \label{spec1}
    q(z)=\sum_{\alpha\in S} z^\alpha\xi_\alpha \text{ as in Definition \ref{realF}, }\alpha=(\alpha_{-1},\alpha_0,\ldots,\alpha_{k}),\Im(\alpha_{k})=0.
\eeq

    \item\label{cv1} Suppose $p\in {\mathscr P}_{m}(k,0,\C^n,\R^n) $. Then
\beq\label{PPR1}
p\circ  \widehat{\LL}_k=q\circ  \widehat{\LL}_{k+1}\text{ for some } q\in {\mathcal P}_{m}^1(k+1,\R^n).
\eeq
Moreover, if $p(z)$ is of the form \eqref{spec1},
then
$q\in {\mathcal P}_{m}^1(k,\R^n)$.

\item \label{cv5}
If $p\in\mathcal P_m^{+}(k)$, then 
\beq \label{samezz}
p(\widehat{\LL}_k(t))=q(\widehat{\LL}_k(t)) \text{ for some function } 
q\in \mathscr P_m^{+}(k).
\eeq

\item\label{cv4} If $p\in \widehat{\mathcal P}_{m}^1(k,\R^n) $ and $\zeta_\bullet\in\mathcal P_m^{+}(k)$, then
\beqs
p\left (\widehat{\LL}_k(t),\zeta_\bullet(\widehat{\LL}_k(t))\right) 
= q\left (\widehat{\LL}_k(t),\zeta_\bullet(\widehat{\LL}_k(t))\right) \text{ for some } q\in \widehat{\mathscr P}_{m}(k,0,\C^n,\R^n).
\eeqs
More specifically, $q(z,\zeta)$ is of the form
\beq\label{spec2}
q(z,\zeta)=\sum_{(\alpha,\beta)\in S} z^\alpha\zeta^\beta \xi_{\alpha,\beta} \text{ as in Definition \ref{realF}, }\alpha=(\alpha_{-1},\alpha_0,\ldots,\alpha_{k}),\Im(\alpha_{k})=0. 
\eeq

\item\label{cv3} Suppose $p\in \widehat{\mathscr P}_{m}(k,0,\C^n,\R^n) $ and $\zeta_\bullet\in\mathcal P_m^{+}(k)$. Then
\beqs
p\left (\widehat{\LL}_k(t),\zeta_\bullet(\widehat{\LL}_k(t))\right) 
= q\left (\widehat{\LL}_{k+1}(t),\zeta_\bullet(\widehat{\LL}_k(t))\right)
\text{ for some } q\in \widehat{\mathcal P}_{m}^1(k+1,\R^n).
\eeqs
Moreover, if $p(z,\zeta)$ is of the form \eqref{spec2},
then
$q\in \widehat{\mathcal P}_{m}^1(k,\R^n)$.
\end{enumerate}
\end{lemma}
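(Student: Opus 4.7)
The plan rests on the single Euler-type identity
\[
\iln_{j-1}(t)^{\pm i\omega}=\exp(\pm i\omega\,\iln_j(t))=\cos(\omega\,\iln_j(t))\pm i\sin(\omega\,\iln_j(t)),\qquad \omega\in\R,\ j\ge 0,
\]
together with its inverse formulas writing $\cos(\omega\iln_j(t))$ and $\sin(\omega\iln_j(t))$ as the half-sum and half-difference of $\iln_{j-1}(t)^{\pm i\omega}$. All five assertions will follow by applying one direction of this identity termwise, with careful bookkeeping of the index shift $j\leftrightarrow j-1$. Parts \ref{cv4} and \ref{cv3} reduce immediately to \ref{cv2} and \ref{cv1} once one notes that the variable $\zeta$ is set equal to $\zeta_\bullet(\widehat\LL_k(t))$ on both sides of the claimed identity, so the factor $\zeta^\beta$ is inert under the conversion.

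For parts \ref{cv2} and \ref{cv4} I would substitute $z=\widehat\LL_k(t)$ into a generator $z^{\alpha}\zeta^{\beta}\prod_{j=0}^{k}\sigma_j(\omega_j z_j)\xi$ with $\alpha\in\mathcal E_\R(m,k,0)$, and replace each factor $\sigma_j(\omega_j\iln_j(t))$ by $c_{j,+}\,\iln_{j-1}(t)^{i\omega_j}+c_{j,-}\,\iln_{j-1}(t)^{-i\omega_j}$, where $(c_{j,+},c_{j,-})=(\tfrac12,\tfrac12)$ if $\sigma_j=\cos$ and $(-\tfrac i2,\tfrac i2)$ if $\sigma_j=\sin$. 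Expanding the product yields a sum, indexed by sign patterns $s=(s_0,\ldots,s_k)\in\{\pm1\}^{k+1}$, of complex monomials with power vector
\[
\tilde\alpha=(\alpha_{-1}+is_0\omega_0,\;\alpha_0+is_1\omega_1,\;\ldots,\;\alpha_{k-1}+is_k\omega_k,\;\alpha_k).
\]
Since $\Re(\tilde\alpha)=\alpha\in\mathcal E_\R(m,k,0)$, one has $\tilde\alpha\in\mathcal E_\C(m,k,0)$ and $\Im(\tilde\alpha_k)=0$, giving exactly the forms \eqref{spec1} and \eqref{spec2}. The involution $s\mapsto -s$ interchanges $\tilde\alpha$ with its conjugate and, since $c_{j,-s}=\overline{c_{j,s}}$ in both choices of $\sigma_j$, conjugates the coefficient; this is precisely \eqref{xiconj}.

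For parts \ref{cv1} and \ref{cv3} I would group the representation of $p$ into conjugate pairs $\{(\alpha,\xi_{\alpha,\beta}),(\bar\alpha,\overline{\xi_{\alpha,\beta}})\}$ using \eqref{xiconj}, write $\Im\alpha=(b_{-1},\ldots,b_k)$, and use $\iln_j(t)^{ib_j}=\exp(ib_j\,\iln_{j+1}(t))$ to collapse each pair to
\[
2\,\widehat\LL_k(t)^{\Re\alpha}\,\zeta^{\beta}\,\Re\!\Bigl[\exp\Bigl(i\sum_{j=-1}^{k} b_j\,\iln_{j+1}(t)\Bigr)\xi_{\alpha,\beta}\Bigr].
\]
Expanding $\prod_{j=-1}^{k}\bigl(\cos(b_j\,\iln_{j+1}(t))+i\sin(b_j\,\iln_{j+1}(t))\bigr)$ and taking the real part produces a finite sum of sinusoidal generators with real coefficient vectors. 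Because the logarithmic index is shifted upward by one, the sinusoids involve $\iln_j(t)$ for $j=0,1,\ldots,k+1$, which in general forces $q\in\mathcal P_m^1(k+1,\R^n)$ (respectively $\widehat{\mathcal P}_m^1(k+1,\R^n)$). Under the extra hypothesis $\Im(\alpha_k)=0$, i.e.~$b_k=0$, the $\iln_{k+1}$ contribution vanishes and the conclusion strengthens to $q\in\mathcal P_m^1(k,\R^n)$ (respectively $\widehat{\mathcal P}_m^1(k,\R^n)$).

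Part \ref{cv5} is then a corollary of \ref{cv2}: writing $p=p_{\max}+r\in\mathcal P_m^+(k)$, the monomial $p_{\max}(z)=z^{\alpha_{\max}}c_{\max}$ already has $\mathscr P_m^+(k)$ form, while every generator of $r$ has $\omega_0=0$ by Definition \ref{sinplus}, so the $z_{-1}$ contribution $is_0\omega_0$ vanishes and each emerging complex power vector $\tilde\alpha$ satisfies $\tilde\alpha_{-1}=0$ together with $\Re(\tilde\alpha)=\alpha<\alpha_{\max}$, matching the defining property \eqref{strictPp} of $\mathscr P_m^+(k)$. The main technical obstacle throughout is keeping straight the index shift between $\iln_j$ and the $(j-1)$-th component of the power vector and verifying that the conjugation symmetry persists under the expansions; once that bookkeeping is in place, each assertion reduces to elementary manipulation of complex exponentials.
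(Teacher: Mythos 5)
Your proposal is correct and follows essentially the same route as the paper's proof: the central identity $\iln_{j-1}(t)^{\pm i\omega}=\exp(\pm i\omega\,\iln_j(t))$, termwise conversion with the index shift $j\leftrightarrow j\pm 1$, the observation that the $\zeta^\beta$ factor is inert, and the reduction of part (iii) to part (i) via $\omega_0=0\Rightarrow\tilde\alpha_{-1}=0$. The only organizational difference is that the paper packages the conversion in part (ii) with rotation matrices $D(\omega_j\iln_{j+1}(t))$ acting recursively on $(\Re\xi,\Im\xi)$, and invokes Corollary \ref{cplxcor} to close part (i), whereas you expand the product of complex exponentials directly and verify the conjugation symmetry \eqref{xiconj} of the coefficients $c_{j,\pm}$ by hand — same content, slightly more explicit bookkeeping.
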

\begin{proof}
The statements (i) and (ii) were proved in \cite[Section 10]{H5} and \cite[Section 7]{H6} with brief arguments. We present here a detailed account.

\medskip
 Part  \ref{cv2}. For $z$ and $\alpha$ as in \eqref{realpz}, the function
    \beq \label{realgood}
        z\mapsto z^\alpha\text{ belongs to }\mathscr P_{m}(k,0,\C,\R).
    \eeq
    Observe that
 \beq\label{sincos1}
  \cos(\omega\iln_j(t))=g(\widehat \LL_k(t))\text{ and }\sin(\omega\iln_j(t))=h(\widehat \LL_k(t)),
   \eeq
 where
 \beq\label{sincos2} 
 g(z)=\frac12(z_{j-1}^{i\omega}+z_{j-1}^{-i\omega})\text{ and }
h(z)=\frac1{2i}(z_{j-1}^{i\omega}-z_{j-1}^{-i\omega}).
\eeq
Obviously, the powers $i\omega$ and $(-i\omega)$ are imaginary numbers, and 
\beq\label{sincos3} 
g,h\in \mathscr P_{m}(k,0,\C,\R).
\eeq
Using properties \eqref{realgood}, \eqref{sincos1}, \eqref{sincos3} and Corollary \ref{cplxcor}, we find that each function 
\beqs
\tilde p(z)\eqdef z^\alpha\prod_{j=0}^k \sigma_j(\omega_j z_j)\xi 
\eeqs
satisfies
$\tilde p\circ \widehat{\LL}_k(t)=\tilde q\circ \widehat{\LL}_k(t)$,  where $\tilde q\in\mathscr P_{m}(k,0,\C^n,\R^n)$.
Summing up finitely many times such functions $\tilde p$, we obtain \eqref{PPR2}.

The second statement is due to the fact that  the functions  $\cos(\omega\iln_k(t))$ and $\sin(\omega\iln_k(t))$ can be converted via  \eqref{sincos1} and \eqref{sincos2}, when $j=k$,  using the functions of   the variable $z_{k-1}$.

\medskip
Part \ref{cv1}.  First, we observe, for $-1\le j\le k$,  $\omega\in\R$  and $\xi\in\C^n$, that
\beq\label{LXY}
\iln_j(t)^{i\omega}\xi =X(t)+iY(t),\quad 
\iln_j(t)^{i\omega}\xi+\iln_j(t)^{-i\omega}\bar \xi=2X(t),
\eeq
where 
\begin{align*}
    X(t)&=\cos(\omega \iln_{j+1}(t))\Re\xi-\sin(\omega \iln_{j+1}(t))\Im\xi,\\
    Y(t)&=\sin(\omega \iln_{j+1}(t))\Re\xi+\cos(\omega \iln_{j+1}(t))\Im\xi.
\end{align*}
Now, thanks to \eqref{phalf}, it suffices to prove \eqref{PPR1} for 
 $$p(z)=z^\alpha \xi+z^{\bar \alpha }\bar \xi,\quad \alpha=r+i\omega,\quad 
 r\in\mathcal E_\R(m,k,0),\quad \omega\in \R^{k+2},\quad \xi\in\C^n.$$
Denote $\alpha=(\alpha_{-1},\alpha_{0},\ldots,\alpha_k)$
and $\omega=(\omega_{-1},\omega_{0},\ldots,\omega_k).$
We write explicitly
$$\widehat{\LL}_k(t)^{i\omega}\xi=\prod_{\ell=-1}^k \iln_\ell(t)^{i\omega_\ell}\xi.$$
To compute this, we define, for $-1\le j\le k$,  $Z_j(t)=\prod_{\ell=-1}^j \iln_\ell(t)^{i\omega_\ell}\xi $. 
For convenience, we also denote $Z_{-2}(t)=\xi$. 
We then have  $\widehat{\LL}_k(t)^{i\omega}\xi=Z_k(t)$ and
\beq \label{ZZ}
    Z_{j}(t)=\iln_{j}(t)^{i\omega_{j}}Z_{j-1}(t)\text{ for $-1\le j\le k$.}
\eeq
Define, for $-2\le j\le k$, 
$$V_j(t)=\begin{pmatrix}V_j^{(1)}(t)\\ V_j^{(2)}(t)\end{pmatrix}
\eqdef \begin{pmatrix}\Re Z_j(t)\\\Im Z_j(t)\end{pmatrix},\text{ and  } 
D(t)=\begin{pmatrix}
    \cos(t)I_n& -\sin(t) I_n\\ \sin(t) I_n&\cos(t) I_n
\end{pmatrix}.$$
Then 
\beq \label{pLV}
    p\circ \widehat{\LL}_k(t) =\widehat{\LL}_k(t)^r \left( Z_k(t)+\overline{Z_k(t)}\right)
    =2\widehat{\LL}_k(t)^r V_k^{(1)}(t).
\eeq
Using \eqref{LXY}, we rewrite \eqref{ZZ} as 
$V_j(t)=D(\omega_{j}\iln_{j+1}(t)) V_{j-1}(t).$
Recursively, we obtain 
\beq\label{Vkform}
\begin{aligned} 
V_{k}(t)&=D(\omega_{k}\iln_{k+1}(t))V_{k-1}(t)=D(\omega_{k}\iln_{k+1}(t))D(\omega_{k-1}\iln_{k}(t))V_{k-2}(t)=\ldots\\ 
&= D(\omega_{k}\iln_{k+1}(t))D(\omega_{k-1}\iln_{k}(t))\ldots D(\omega_{-1} \iln_0(t))V_{-2}(t)\\
&= D(\omega_{k}\iln_{k+1}(t))D(\omega_{k-1}\iln_{k}(t))\ldots D(\omega_{-1} t)(\Re\xi,\Im\xi).
\end{aligned}
\eeq 
Combining \eqref{pLV} with \eqref{Vkform}, we obtain \eqref{PPR1}.

For the second statement of \ref{cv1},  if $p(z)$ is of the form \eqref{spec1}, then $\omega_k=0$, 
which implies $D(\omega_{k}\iln_{k+1}(t))=I_{2n}$ in \eqref{Vkform}. Thus, we have $q\in {\mathcal P}_{m}^1(k,\R^n)$.

\medskip
Part \ref{cv5}. This statement is a consequence of part \ref{cv2} and the fact that $\omega_0=0$ in \eqref{qsmall} for $p$ implies $\alpha_{-1}=0$ in \eqref{strictPp} for $q$.

\medskip
The proofs of parts \ref{cv4} and \ref{cv3} are the same as those of parts \ref{cv2} and \ref{cv1}, respectively. The reason is that  the extra factor $\zeta^\beta$ does not affect the arguments which are based on the variable $z$. 
\end{proof}

\begin{proof}[Proof of Theorem \ref{thmreal}]
By the virtue of Lemma \ref{convert}\ref{cv2} and \ref{cv5}, the asymptotic expansion \eqref{freal1} or \eqref{freal2} of $f(t)$ can be converted to the asymptotic expansion \eqref{fas1} or \eqref{fas2}, respectively, and Assumptions \ref{fmain} and \ref{aspone} hold true. In particular, thanks to \eqref{samezz}, we have
$$g_0(\widehat{\LL}_{n_0}(t))=p_0(\widehat{\LL}_{n_0}(t))\text{ for some function $p_0\in \mathscr P_{m_*}^{+}(n_0)$ with $n_0=n_1$.}$$
With $\zeta_*=p_0$ in \eqref{q1def}, we take $\zeta_\bullet=g_0$ to have
$\zeta_\bullet(\widehat{\LL}_{n_0}(t))=\zeta_*(\widehat{\LL}_{n_0}(t)).$
Hence, $\Xi_k(t)$ defined by \eqref{Zbar} agrees with  \eqref{newZbar}.

Applying Theorem \ref{mainthm}, we obtain the asymptotic expansion \eqref{solnxp}, which, by Lemma \ref{convert}\ref{cv3}, can be converted to the asymptotic expansion \eqref{yreal2}, where the number $n_k$ in \eqref{yreal2} is adjusted by adding $1$ to $n_k$ in \eqref{solnxp}.
\end{proof}

\begin{remark}\label{noextra}
    Suppose $f(t)$ has the asymptotic expansion \eqref{freal1}, where $\widetilde n_k=n_*$ for a fixed number $n_*$ and all $k\ge 1$. Then in \eqref{mainf} $n_k=n_*$ and, thanks to the second statement of Lemma \ref{convert}\ref{cv2}, all function $p_k(z)$ in \eqref{fas1} is of the form \eqref{spec1}.
    Going through the construction in subsection \ref{secE}, $n_k=n_*$  in \eqref{solnxp}, and the function $\zeta_*$ in \eqref{q1def} and  all functions $q_k$  in \eqref{solnxp} are of the form \eqref{spec2}. By Lemma \ref{convert}\ref{cv3}, all $n_k=n_*$ in \eqref{yreal2}, that is, we \emph{do not} need to use $n_k=n_*+1$ in the last sentence of the proof of Theorem \ref{thmreal}.  
    The case  $f(t)$ has the asymptotic expansion \eqref{freal2} is similar.
\end{remark}

\section{Conclusions}\label{conclude}
We have obtained the asymptotic expansion, in a new form,  for decaying solutions of non-autonomous nonlinear non-smooth differential equations. We specified a suitable  class of complicated decaying forcing functions for this theory to be applicable. The novelty is the introduction of a new subordinate variable in establishing such a closed-form expansion. The use of the subordinate variable in the current paper may enable us to obtain asymptotic expansions in other situations that are deemed impossible in previous study.  This idea will be explored in our future work. It can be applied to both nonlinear ODE and PDE. In fact, we develop the theory for multi-subordinate variables even for the NSE in a subsequent paper \cite{H11}.  Our result may be extended to equations with fractional derivatives, even for newer classes such as those in \cite{frac1,frac2}.
    
\medskip
\noindent\textbf{Acknowledgment.} The author would like to thank Dat Cao for many insightful  discussions.

\medskip
\noindent\textbf{Data availability.} No new data were created or analyzed in this study.

\medskip
\noindent\textbf{Funding.} No funds were received for conducting this study,

\medskip
\noindent\textbf{Conflict of interest.} There are no conflicts of interests.

\bibliography{paperbaseall}{}
\bibliographystyle{abbrv}

 \end{document}